\theoremstyle{definition}
\newtheorem{Def}{Definition}[subsection]
\newtheorem{Def-Prop}[Def]{Definition-Proposition}
\newtheorem{remark}[Def]{Remark}
\newtheorem{Prop}[Def]{Proposition}
\newtheorem{Conj}{Conjecture}
\DeclareMathOperator*{\summm}{\oplus}
\newcommand{\hdot}{{\:\raisebox{3pt}{\text{\circle*{1.5}}}}}
\fi\ProvidesPackage{diagrams}[2014/12/31 v3.94 Paul Taylor's commutative
diagrams]
\else\message{WARNING: the \string\diagram\space
command is already defined and will not be loaded again}\expandafter\endinput
\edef\cdrestoreat{
\noexpand\catcode`\noexpand\@=\the\catcode`\@
\noexpand\catcode`\noexpand\#=\the\catcode`\#
\noexpand\catcode`\noexpand\$=\the\catcode`\$
\noexpand\catcode`\noexpand\<=\the\catcode`\<
\noexpand\catcode`\noexpand\>=\the\catcode`\>
\noexpand\catcode`\noexpand\:=\the\catcode`\:
\noexpand\catcode`\noexpand\;=\the\catcode`\;
\noexpand\catcode`\noexpand\!=\the\catcode`\!
\noexpand\catcode`\noexpand\?=\the\catcode`\?
\noexpand\catcode`\noexpand\+=\the\catcode'53
}\catcode`\@=11 \catcode`\#=6 \catcode`\<=12 \catcode`\>=12 \catcode'53=12
\let\diagram@help@messages y\fi
\def\cdps@Rokicki#1{\special{ps:#1}}\let\cdps@dvips\cdps@Rokicki\let
\let\CD@HB\cdps@Rokicki\let\CD@IK\cdps@Rokicki
\let\CD@HB\cdps@Rokicki
\def\cdps@Bechtolsheim#1{\special{dvitps: Literal "#1"}}%
\let\cdps@dvitps\cdps@Bechtolsheim\let\cdps@IntegratedComputerSystems
\def\cdps@Clark#1{\special{dvitops: inline #1}}
\let\cdps@dvitops\cdps@Clark
\let\cdps@OzTeX\empty\let\cdps@oztex\empty\let\cdps@Trevorrow\empty
\def\cdps@Coombes#1{\special{ps-string #1}}
\def\CD@DE{\global\let}\def\CD@RH{\outer\def}
\xdef\CD@o{\string\{}\xdef\CD@yC{\string\}}
\xdef\CD@S{\string\&}
\xdef\CD@nC{\string\$}\gdef\CD@LG{$$}
\gdef\CD@uG{^^J}
\gdef\CD@uG{^^M}
\gdef\CD@uG{^^J}
\mathchardef\lessthan='30474 \mathchardef\greaterthan='30476
\font\tenln=line10\relax
\let\tenlnw\nullfont\else
\font\tenlnw=linew10\relax
\def\cd@shouldnt#1{\CD@KB{* THIS (#1) SHOULD NEVER HAPPEN! *}}
\def\get@round@pair#1(#2,#3){#1{#2}{#3}}
\def\get@square@arg#1[#2]{#1{#2}}
\def\CD@AE#1{\CD@PK\let\CD@DH\CD@@E\CD@@E#1,],}
\def\CD@m{[}\def\CD@RD{]}\def\commdiag#1{{\let\enddiagram\relax\diagram[]#1%
\enddiagram}}
\def\CD@BF{{\ifx\CD@EH[\aftergroup\get@square@arg\aftergroup\CD@YH\else
\aftergroup\CD@JH\fi}}
\def\CD@CF#1#2{\def\CD@YH{#1}\def\CD@JH{#2}\futurelet\CD@EH\CD@BF}
\def\CD@KK{|}
\def\CD@PB{
\tokcase\CD@DD:\CD@y\break@args;\catcase\@super:\upper@label;\catcase\CD@lJ:%
\lower@label;\tokcase{~}:\middle@label;
\tokcase<:\CD@iF;
\tokcase>:\CD@iI;
\tokcase(:\CD@BC;
\tokcase[:\optional@;
\tokcase.:\CD@JJ;
\catcase\space:\eat@space;\catcase\bgroup:\positional@;\default:\CD@@A
\break@args;\endswitch}
\def\switch@arg{
\catcase\@super:\upper@label;\catcase\CD@lJ:\lower@label;\tokcase[:\optional@
;
\tokcase.:\CD@JJ;
\catcase\space:\eat@space;\catcase\bgroup:\positional@;\tokcase{~}:%
\middle@label;
\default:\CD@y\break@args;\endswitch}
\let\CD@tJ\relax\ifx\protect\CD@qK\let\protect\relax\fi\ifx\AtEndDocument
\def\CD@PG{\CD@gB}\def\CD@GF#1#2{}\else\def\CD@PG#1{\edef\CD@CH{#1}%
\expandafter\CD@oC\CD@CH\CD@OD}\def\CD@oC#1\CD@OD{\AtEndDocument{\typeout{%
\CD@tA: #1}}}\def\CD@GF#1#2{\gdef#1{#2}\AtEndDocument{#1}}\fi\def\CD@ZA#1#2{%
\def#1{\CD@PG{#2\CD@mD\CD@W}\CD@DE#1\relax}}\def\CD@uF#1\repeat{\def\CD@p{#1}%
\CD@OF}\def\CD@OF{\CD@p\relax\expandafter\CD@OF\fi}\def\CD@sF#1\repeat{\def
\CD@q{#1}\CD@PF}\def\CD@PF{\CD@q\relax\expandafter\CD@PF\fi}\def\CD@tF#1%
\def\CD@QF{\CD@r\relax\expandafter\CD@QF\fi}\def
\def\CD@rG#1#2{\csname newtoks\endcsname#1#1=%
\expandafter{\csname#2\endcsname}}\else\csname newtoks\endcsname\no@cd@help
\def\CD@rG#1#2{\let#1\no@cd@help}\fi\chardef\CD@lF
\chardef\CD@lI=2 \chardef\CD@MH=5 \chardef\CD@tH=6 \chardef\CD@sH=7
\chardef\CD@PC=9 \dimendef\CD@hI=2 \dimendef\CD@hF=3 \dimendef\CD@mF=4
\def\sdef#1#2{\def#1{#2}%
}\def\CD@L#1{\expandafter\aftergroup\csname#1\endcsname}\def\CD@RC#1{%
\expandafter\def\csname#1\endcsname}\def\CD@sD#1{\expandafter\gdef\csname#1%
\endcsname}\def\CD@vC#1{\expandafter\edef\csname#1\endcsname}\def\CD@nF#1#2{%
\expandafter\let\csname#1\expandafter\endcsname\csname#2\endcsname}\def\CD@EE
\def\CD@AK#1{\csname#1\endcsname}\def\CD@XJ#1{\expandafter\show\csname#1%
\endcsname}\def\CD@ZJ#1{\expandafter\showthe\csname#1\endcsname}\def\CD@WJ#1{%
\expandafter\showbox\csname#1\endcsname}\def\CD@tA{Commutative Diagram}\edef
\edef\CD@dC{\string\diagram}\edef\CD@HD{\string\enddiagram
}\edef\CD@EC{\string\\}\def\CD@eF{LaTeX}\ifx\@ignoretrue\CD@qK\expandafter
\def\@ignoretrue{%
\global\ignore@true}\def\@ignorefalse{\global\ignore@false}\fi
\def\CD@g{{\ifnum0=`}\fi}\def\CD@wC{\ifnum0=`{\fi}}\def\catcase#1:{\ifcat
\noexpand\CD@EH#1\CD@tJ\expandafter\CD@kC\else\expandafter\CD@dJ\fi}\def
\def\CD@kC#1;#2\endswitch{#1}\def\CD@dJ#1;{}\let\endswitch\relax\def\default:%
\def\at@{@}\fi\edef\CD@P{\CD@o pt\CD@yC}%
\lTo\sp{#1}\sb{#2}\CD@z}\CD@RC{\CD@P)}#1)#2){\CD@z\rTo\sp{#1}\sb{#2}\CD@z}%
\def\CD@O{\def\endCD{\enddiagram}\CD@RC{\CD@P A}##1A##2A{\uTo<{##1}>{##2}%
\CD@z\CD@z}\CD@RC{\CD@P V}##1V##2V{\dTo<{##1}>{##2}\CD@z\CD@z}\CD@RC{\CD@P=}{%
\CD@z\hEq\CD@z}\CD@RC{\CD@P\CD@KK}{\vEq\CD@z\CD@z}\CD@RC{\CD@P\string\vert}{%
\vEq\CD@z\CD@z}\CD@RC{\CD@P.}{\CD@z\CD@z}\let\CD@z\CD@Q}\def\CD@IE{\let\tmp
\CD@JE\ifcat A\noexpand\CD@CH\else\ifcat=\noexpand\CD@CH\else\ifcat\relax
\noexpand\CD@CH\else\let\tmp\at@\fi\fi\fi\tmp}\def\CD@JE#1{\CD@nF{tmp}{\CD@P
\string#1}\ifx\tmp\relax\def\tmp{\at@#1}\fi\tmp}\def\CD@z{}\begingroup
\def\aftergroup\CD@T\aftergroup{\aftergroup\def\catcode`\@\active
\aftergroup @\endgroup{\futurelet\CD@CH\CD@IE}}\newcount\CD@uA\newcount\CD@vA
\newdimen\CD@OA\newdimen\CD@PA\CD@tG\CD@gE
\newdimen\CD@RA\newdimen\CD@SA\newcount
\newdimen\CD@QA\newbox\CD@DA\CD@tG\CD@lE\CD@dA\CD@bA
\def\CD@V#1#2{\ifdim#1<#2\relax#1=#2\relax\fi}%
\def\CD@X#1#2{\ifdim#1>#2\relax#1=#2\relax\fi}\newdimen\CD@XH\CD@XH=1sp
\newdimen\CD@zC\CD@zC\z@\def\CD@cJ{\ifdim\CD@zC=1em\else\CD@nJ\fi}\def\CD@nJ{%
\CD@zC1em\def\CD@NC{\fontdimen8\textfont3 }\CD@@J\CD@NJ\setbox0=\vbox{\CD@t
\noindent\CD@k\null\penalty-9993\null\CD@ND\null\endgraf\setbox0=\lastbox
\unskip\unpenalty\setbox1=\lastbox\global\setbox\CD@IG=\hbox{\unhbox0\unskip
\unskip\unpenalty\setbox0=\lastbox}\global\setbox\CD@KG=\hbox{\unhbox1\unskip
\unpenalty\setbox1=\lastbox}}}\newdimen\CD@@I\CD@@I=1true in \divide\CD@@I300
\def\CD@zH#1{\multiply#1\tw@\advance#1\ifnum#1<\z@-\else+\fi\CD@@I\divide#1%
\tw@\divide#1\CD@@I\multiply#1\CD@@I}\def\MapBreadth{\afterassignment\CD@gI
\CD@LF}\newdimen\CD@LF\newdimen\CD@oI\def\CD@gI{\CD@oI\CD@LF\CD@V\CD@@I{4%
\CD@XH}\CD@X\CD@@I\p@\CD@zH\CD@oI\ifdim\CD@LF>\z@\CD@V\CD@oI\CD@@I\fi\CD@cJ}%
\def\CD@RJ#1{\CD@zD\count@\CD@@I#1\ifnum\count@>\z@\divide\CD@@I\count@\fi
\CD@gI\CD@NJ}\def\CD@NJ{\dimen@\CD@QC\count@\dimen@\divide\count@5\divide
\count@\CD@@I\edef\CD@OC{\the\count@}}\def\CD@AJ{\CD@QJ\z@}\def\CD@QJ#1{%
\CD@tI\axisheight\advance\CD@tI#1\relax\advance\CD@tI-.5\CD@oI\CD@zH\CD@tI
\CD@sI-\CD@tI\advance\CD@tI\CD@LF}\newdimen\CD@DC\CD@DC\z@\newdimen\CD@eJ
\def\CD@CJ#1{\CD@sI#1\relax\CD@tI\CD@sI\advance\CD@tI\CD@LF\relax}%
\def\horizhtdp{height\CD@tI depth\CD@sI}\def\axisheight{\fontdimen22\the
\textfont\tw@}\def\script@axisheight{\fontdimen22\the\scriptfont\tw@}\def
\def\CD@NC{0.4pt}\def
\def\CD@UK{\fontdimen3\textfont\z@}\newdimen
\newdimen\CD@nA\CD@nA\z@\def\CD@RG{\ifincommdiag1.3em\else2em\fi}%
\newdimen\CD@YB\def\CellSize{\afterassignment\CD@kB\DiagramCellHeight}%
\newdimen\DiagramCellHeight\DiagramCellHeight-\maxdimen\newdimen
\def\CD@kB{\DiagramCellWidth
\DiagramCellHeight}\def\CD@QC{3em}\newdimen\MapShortFall\def\MapsAbut{%
\MapShortFall\z@\objectheight\z@\objectwidth\z@}\newdimen\CD@iA\CD@iA\z@
\fi\CD@nF{%
ifUglyObsoleteDiagrams}{relax}\newif\ifUglyObsoleteDiagrams\def\CD@nK{\CD@aB
\UglyObsoleteDiagramsfalse}\def\CD@oK{\CD@ZB\UglyObsoleteDiagramstrue}\CD@vE
\def\CD@sK{\ifx\pdfoutput
\CD@qK\else\ifx\pdfoutput\relax\else\ifnum\pdfoutput>\z@\CD@pK\fi\fi\fi} \def
\global\let\CD@oK\relax\global\let\CD@pK\relax\global\let\CD@sK
\def\CD@tK#1{}\ifx\pdfliteral\CD@qK\else\ifx
\let\CD@tK\pdfliteral\fi\fi\ifx\XeTeXrevision\CD@qK
\def\newarrowhead{\CD@mG h\CD@BG\CD@GG>}%
\def\newarrowtail{\CD@mG t\CD@BG\CD@GG>}\def\newarrowmiddle{\CD@mG m\CD@BG
\hbox@maths\empty}\def\newarrowfiller{\CD@mG f\CD@bE\CD@MK-}\def\CD@mG#1#2#3#%
\CD@ZA\CD@MC{\CD@eF\space diagonals are used unless
PostScript is set}\def\defaultarrowhead#1{\edef\CD@sJ{#1}\CD@@J}\def\CD@@J{%
\CD@IJ\CD@sJ<>ht\CD@IJ\CD@sJ<>th}\def\CD@IJ#1#2#3#4#5{\CD@HJ{r#4}{#3}{l#5}{#2%
}{r#4:#1}\CD@HJ{r#5}{#2}{l#4}{#3}{l#4:#1}\CD@HJ{d#4}{#3}{u#5}{#2}{d#4:#1}%
\CD@HJ{d#5}{#2}{u#4}{#3}{u#4:#1}}\def\CD@HJ#1#2#3#4#5{\begingroup\aftergroup
\CD@GJ\CD@L{#1+:#2}\CD@L{#1:#2}\CD@L{#3:#4}\CD@L{#5}\endgroup}\def\CD@GJ#1#2#%
\def\CD@sJ{}\CD@@J\def\CD@GJ#1#2#3#4{\setbox#1=#4}\ifx\tenln
\def\CD@sJ{vee}\else\let\CD@sJ\CD@eF\fi\def\CD@xF#1#2#3{\begingroup
\aftergroup\CD@wF\CD@L{#1#2:#3#3}\CD@L{#1#2:#3}\aftergroup\CD@yF\CD@L{#1#2:#3%
-#3}\CD@L{#1#2:#3}\endgroup}\def\CD@wF#1#2{\def#1{\hbox{\rlap{#2}\kern.4%
\CD@zC#2}}}\def\CD@yF#1#2{\def#1{\hbox{\rlap{#2}\kern.4\CD@zC#2\kern-.4\CD@zC
}}}\CD@xF lh>\CD@xF rt>\CD@xF rh<\CD@xF rt<\def\CD@yF#1#2{\def#1{\hbox{\kern-%
.4\CD@zC\rlap{#2}\kern.4\CD@zC#2}}}\CD@xF rh>\CD@xF lh<\CD@xF lt>\CD@xF lt<%
\def\CD@wF#1#2{\def#1{\vbox{\vbox to\z@{#2\vss}\nointerlineskip\kern.4\CD@zC#%
2}}}\def\CD@yF#1#2{\def#1{\vbox{\vbox to\z@{#2\vss}\nointerlineskip\kern.4%
\CD@zC#2\kern-.4\CD@zC}}}\CD@xF uh>\CD@xF dt>\CD@xF dh<\CD@xF dt<\def\CD@yF#1%
\def\CD@BG#1{\hbox{%
\mathsurround\z@\offinterlineskip\CD@k\mkern-1.5mu{#1}\mkern-1.5mu\CD@ND}}%
\def\hbox@maths#1{\hbox{\CD@k#1\CD@ND}}\def\CD@GG#1{\hbox to\CD@LF{\setbox0=%
\hbox{\offinterlineskip\mathsurround\z@\CD@k{#1}\CD@ND}\dimen0.5\wd0\advance
\dimen0-.5\CD@oI\CD@zH{\dimen0}\kern-\dimen0\unhbox0\hss}}\def\CD@sB#1{\hbox
to2\CD@LF{\hss\offinterlineskip\mathsurround\z@\CD@k{#1}\CD@ND\hss}}\def
\def\CD@bE#1{\hbox{\kern-.15%
\CD@zC\CD@k{#1}\CD@ND\kern-.15\CD@zC}}\def\CD@MK#1{\vbox{\offinterlineskip
\kern-.2ex\CD@GG{#1}\kern-.2ex}}\def\@fillh{\xleaders\vrule\horizhtdp}\def
\def\CD@@D{\hbox{\vrule height 1pt
depth-1pt width 1pt}}\CD@RC{rf:}{\CD@@D}\CD@nF{lf:}{rf:}\CD@nF{+f:}{rf:}%
\def\CD@BD{\CD@U\null
\CD@@D\null\CD@@D\null}\edef\CD@lG{\string\newarrow}\def\newarrow#1#2#3#4#5#6%
\edef\@name{#1}\edef\CD@oJ{#2}\edef\CD@iD{#3}\edef\CD@QG{#4}\edef
\edef\CD@LE{#6}\let\CD@HE\CD@sG\let\CD@FK\CD@BH\let\@x\CD@AH\ifx
\let\CD@oJ\empty\fi\ifx\CD@LE\CD@jD\let\CD@LE\empty\fi\def\CD@LI{%
r}\def\CD@SF{l}\def\CD@IC{d}\def\CD@yJ{u}\def\CD@gH{+}\def\@m{-}\ifx\CD@iD
\let\CD@QG\empty\fi\ifx\CD@LE\empty\ifx\CD@iD\CD@aE\let
\let\@x\CD@zG\fi\fi\else\edef\CD@a{\CD@iD\CD@oJ}\ifx\CD@a\empty
\let\CD@QG\empty\fi\fi\fi\ifmmode\aftergroup\CD@kG\else\CD@@A
\CD@b\CD@L{r\@name}\fi\fi\endgroup}\def\CD@sG{\CD@vG\CD@LI
\CD@SF rl\Horizontal@Map}\def\CD@BH{\CD@vG\CD@IC\CD@yJ du\Vertical@Map}\def
\def\CD@yG{\CD@vG\CD@gH\@m+-\Slant@Map}%
\def\CD@zG{\CD@vG\CD@gH\@m+-\Slope@Map}\catcode`\/=\active\def\CD@vG#1#2#3#4#%
\def\CD@jG#1#2#3#4//{\edef\CD@fG
{#2}\aftergroup\sdef\CD@L{#1\@name}\aftergroup{\aftergroup#3\CD@M#4//%
\aftergroup}}\def\CD@M#1/{\edef\CD@EH{#1}\ifx\CD@EH\empty\else\CD@L{\CD@fG#1}%
\expandafter\CD@M\fi}\catcode`\/=12 \def\CD@nG#1#2#3#4#5#6#7#8{\aftergroup
\sdef\CD@L{#6\@name}\aftergroup{\CD@L{#2\@name}\if#2#4\aftergroup\CD@CI\else
\aftergroup\CD@BI\fi\CD@L{#1\@name}%
\aftergroup(\aftergroup#3\aftergroup,\aftergroup#5\aftergroup)\aftergroup}}%
\def\CD@oB#1#2#3#4{\expandafter\ifx\csname#1#2:#4\endcsname\relax\CD@y\CD@gB{%
arrow#3 "#4" undefined}\fi}\CD@rG\CD@VE{All five components must be defined
before an arrow.}\CD@rG\CD@SE{\CD@lG, unlike \string\HorizontalMap, is a
declaration.}\def\CD@b#1{\CD@YA{Arrows \string#1 etc could not be defined}%
\CD@VE}\def\CD@kG{\CD@YA{misplaced \CD@lG}\CD@SE}\def\newdiagramgrid#1#2#3{%
\CD@RC{cdgh@#1}{#2,],}
\CD@RC{cdgv@#1}{#3,],}}
\def\CD@yH{\CD@VA6 }\def\CD@OB{\CD@VA1 \global\CD@yA1
\CD@DE\CD@YF\empty}\def\CD@YF{}\def\CD@nB#1{\relax\CD@MD\edef\CD@vJ{#1}%
\begingroup\CD@rE\else\ifcase\CD@VA\ifmmode\else\CD@YG\CD@E0\fi\or\CD@cE5\or
\CD@YG\CD@F5\or\CD@YG\CD@B5\or\CD@YG\CD@B5\or\CD@YG\CD@C5\or\CD@cE7\or\CD@YG
\CD@D7\fi\fi\endgroup\xdef\CD@YF{#1}}\def\CD@pB#1#2#3#4#5{\relax\CD@MD\xdef
\CD@vJ{#4}\begingroup\ifnum\CD@VA<#1 \expandafter\CD@cE\ifcase\CD@VA0\or#2\or
#3\else#2\fi\else\ifnum\CD@VA<6 \CD@tJ\CD@YG\CD@B#2\else\CD@YG\CD@G#2\fi\fi
\endgroup\CD@DE\CD@YF\CD@vJ\ifincommdiag\let\CD@ZD#5\else\let\CD@ZD\CD@LK\fi}%
\def\CD@yI{\global\CD@yA=\ifnum\CD@VA<5 1\else2\fi\relax}\def\CD@OI{\CD@VA
\CD@yA}\def\CD@cE#1{\aftergroup\CD@VA\aftergroup#1\aftergroup\relax}\def
\let\CD@yI\relax\let\CD@OI\relax}\def\CD@FH#1#2#3#4#5{\ifincommdiag\let\CD@ZD
#5\else\xdef\CD@vJ{#4}\let\CD@ZD\CD@LK\fi}\def\CD@YG#1{\aftergroup#1%
\aftergroup\relax\CD@cE}\def\CD@B{\CD@YE\CD@S\CD@ME\CD@Q}\def\CD@G{\CD@YE{%
\CD@yC\CD@S}\CD@XE\CD@QD\CD@Q}\def\CD@F{\CD@YE{*\CD@S}\CD@RE\clubsuit\CD@Q}%
\def\CD@C{\CD@YE{\CD@S*\CD@S}\CD@RE\CD@Q\clubsuit\CD@Q}\def\CD@D{\CD@YE\CD@EC
\CD@TE\\}\def\CD@E{\CD@YE\CD@nC\CD@QE\CD@k}\def\CD@LK{\CD@YA{\CD@vJ\space
ignored \CD@dH}\CD@WE}\def\CD@FE{}\def\CD@d{\CD@YA{maps must never be enclosed
in braces}\CD@OE}\def\CD@dH{outside diagram}\def\CD@FC{\string\HonV, \string
\VonH\space and \string\HmeetV}\CD@rG\CD@ME{The way that horizontal and
vertical arrows are terminated implicitly means\CD@uG that they cannot be
mixed with each other or with \CD@FC.}\CD@rG\CD@XE{\string\pile\space is for
parallel horizontal arrows; verticals can just be put together in\CD@uG a cell%
. \CD@FC\space are not meaningful in a \string\pile.}\CD@rG\CD@RE{The
horizontal maps must point to an object, not each other (I've put in\CD@uG one
which you're unlikely to want). Use \string\pile\space if you want them
parallel.}\CD@rG\CD@TE{Parallel horizontal arrows must be in separate layers
of a \string\pile.}\CD@rG\CD@QE{Horizontal arrows may be used \CD@dH s, but
must still be in maths.}\CD@rG\CD@WE{Vertical arrows, \CD@FC\space\CD@dH s don%
't know where\CD@uG where to terminate.}\CD@rG\CD@OE{This prevents them from
stretching correctly.}\def\CD@YE#1{\CD@YA{"#1" inserted \ifx\CD@YF\empty
before \CD@vJ\else between \CD@YF\ifx\CD@YF\CD@vJ s\else\space and \CD@vJ\fi
\fi}}\count@=\year\multiply\count@12 \advance\count@\month\ifnum\count@>24247
\def
\def\CD@TJ{\CD@GB-%
9999 \let\CD@ZD\CD@XD\ifincommdiag\else\CD@cJ\ifinpile\else\skip2\z@ plus 1.5%
\CD@VK minus .5\CD@UK\skip4\skip2 \fi\fi\let\CD@kD\@fillh\CD@nF{fill@dot}{rf:%
.}}\def\Vector@Map{\CD@HK4}\def\Slant@Map{\CD@HK{\CD@EF255\else6\fi}}\def
\def\CD@HK#1#2#3#4#5#6{\CD@LC\def\CD@WK{2}\def\CD@aK{%
2}\def\CD@ZK{1}\def\CD@bK{1}\let\Horizontal@Map\CD@nI\def\CD@OG{#1}\def\CD@NI
{\CD@U#2#3#4#5#6}}\def\CD@nI{\CD@TJ\CD@JB\let\CD@ZD\CD@TD\CD@qD}\CD@tG\CD@pE
\def\cds@missives{\CD@rA}\def\CD@TD{\CD@vE\let\CD@OG\CD@OC
\CD@x\CD@zE\CD@WF\fi\setbox0\hbox{\incommdiagfalse\CD@HI}\CD@pE\CD@aD\else
\global\CD@YC\CD@bD\fi\ifvoid6 \ifvoid7 \CD@eE\fi\fi\CD@zE\else\CD@BD\global
\CD@YC\let\CD@CG\CD@IH\CD@YD\fi\else\CD@NI\CD@MI\global\CD@YC\CD@YD\fi}\def
\def\CD@U#1#2#3#4#5{\let\CD@oJ#1\let\CD@iD#2\let\CD@QG#3%
\let\CD@jD#4\let\CD@LE#5\CD@TB\ifx\CD@iD\CD@jD\CD@UB\fi}\def\CD@qD#1#2#3#4#5{%
\CD@U#1#2#3#4#5\CD@tD}\def\Vertical@Map{\CD@pB433{vertical map}\CD@cD\CD@LC
\CD@GB-9995 \let\CD@kD\@fillv\CD@nF{fill@dot}{df:.}\CD@qD}\def\break@args{%
\def\CD@tD{\CD@ZD}\CD@ZD\endgroup\aftergroup\CD@FE}\def\CD@MJ{\setbox1=\CD@oJ
\setbox5=\CD@LE\ifvoid3 \ifx\CD@QG\null\else\setbox3=\CD@QG\fi\fi\CD@@G2%
\CD@iD\CD@@G4\CD@jD}\def\CD@pF#1{\ifvoid1\else\CD@oF1#1\fi\ifvoid2\else\CD@oF
2#1\fi\ifvoid3\else\CD@oF3#1\fi\ifvoid4\else\CD@oF4#1\fi\ifvoid5\else\CD@oF5#%
1\fi} \def\CD@oF#1#2{\setbox#1\vbox{\offinterlineskip\box#1\dimen@\prevdepth
\advance\dimen@-#2\relax\setbox0\null\dp0\dimen@\ht0-\dimen@\box0}}\def\CD@@G
\CD@ZA\CD@BK{\string\HorizontalMap, \string\VerticalMap\space and
\string\DiagonalMap\CD@uG are obsolete - use \CD@lG\space to pre-define maps}%
\def\HorizontalMap#1#2#3#4#5{\CD@BK\CD@nB{old horizontal map}\CD@LC\CD@TJ\def
\CD@oJ{\CD@UH{#1}}\CD@SH\CD@iD{#2}\def\CD@QG{\CD@UH{#3}}\CD@SH\CD@jD{#4}\def
\CD@LE{\CD@UH{#5}}\CD@tD}\def\VerticalMap#1#2#3#4#5{\CD@BK\CD@pB433{vertical
map}\CD@cD\CD@LC\CD@GB-9995 \let\CD@kD\@fillv\def\CD@oJ{\CD@GG{#1}}\CD@VH
\CD@iD{#2}\def\CD@QG{\CD@GG{#3}}\CD@VH\CD@jD{#4}\def\CD@LE{\CD@GG{#5}}\CD@tD}%
\def\DiagonalMap#1#2#3#4#5{\CD@BK\CD@LC\def\CD@OG{4}\let\CD@kD\CD@qK\let
\CD@ZD\CD@YD\def\CD@WK{2}\def\CD@aK{2}\def\CD@ZK{1}\def\CD@bK{1}\def\CD@QG{%
\CD@vF{#3}}\ifPositiveGradient\let\mv\raise\def\CD@oJ{\CD@vF{#5}}\def\CD@iD{%
\CD@vF{#4}}\def\CD@jD{\CD@vF{#2}}\def\CD@LE{\CD@vF{#1}}\else\let\mv\lower\def
\CD@oJ{\CD@vF{#1}}\def\CD@iD{\CD@vF{#2}}\def\CD@jD{\CD@vF{#4}}\def\CD@LE{%
\CD@vF{#5}}\fi\CD@tD}\def\CD@aE{-}\def\CD@AD{\empty}\def\CD@SH{\CD@EG\CD@bE
\CD@aE\@fillh}\def\CD@VH{\CD@EG\CD@MK\CD@KK\@fillv}\def\CD@EG#1#2#3#4#5{\def
\CD@CH{#5}\ifx\CD@CH#2\let#4#3\else\let#4\null\ifx\CD@CH\empty\else\ifx\CD@CH
\CD@AD\else\let#4\CD@CH\fi\fi\fi}\def\CD@UH#1{\hbox{\mathsurround\z@
\offinterlineskip\def\CD@CH{#1}\ifx\CD@CH\empty\else\ifx\CD@CH\CD@AD\else
\CD@k\mkern-1.5mu{\CD@CH}\mkern-1.5mu\CD@ND\fi\fi}}\def\CD@yD#1#2{\setbox#1=%
\hbox\bgroup\setbox0=\hbox{\CD@k\labelstyle()\CD@ND}
\setbox1=\null\ht1\ht0\dp1\dp0\box1 \kern.1\CD@zC\CD@k\bgroup\labelstyle
\aftergroup\CD@LD\CD@xD}\def\CD@LD{\CD@ND\kern.1\CD@zC\egroup\CD@tD}\def
\def\CD@mJ{
\catcase\bgroup:\CD@v;\catcase\egroup:\missing@label;\catcase\space:\CD@TF;%
\tokcase[:\CD@XF;
\default:\CD@zJ;\endswitch}\def\CD@v{\let\CD@MD\CD@c\let\CD@CH}\def\CD@zJ#1{%
\let\CD@UF\egroup{\let\actually@braces@missing@around@macro@in@label\CD@ZH
\let\CD@MD\CD@xC\let\CD@UF\CD@VF#1%
\actually@braces@missing@around@macro@in@label}\CD@UF}\def
\def\missing@label
\egroup\CD@YA{missing label}\CD@PE}\def\CD@xC{\egroup\missing@label}\outer
\def\CD@ZH{}\def\CD@UF{}\def\CD@VF{\CD@wC\CD@UF}\def\CD@MD{}\def\CD@XF{\let
\CD@N\CD@xD\get@square@arg\CD@AE}\CD@rG\CD@PE{The text which has just been
read is not allowed within map labels.}\def\CD@c{\egroup\CD@YA{missing \CD@yC
\space inserted after label}\CD@PE}\def\upper@label{\CD@oD\CD@yD6}\def
\def\middle@label{%
\CD@yD3}\CD@tG\CD@yE\CD@pD\CD@oD\def\CD@iF{\ifPositiveGradient\CD@tJ
\expandafter\upper@label\else\expandafter\lower@label\fi}\def\CD@iI{%
\ifPositiveGradient\CD@tJ\expandafter\lower@label\else\expandafter
\upper@label\fi}\def\positional@{\CD@gB{labels as positional arguments are
obsolete}\CD@yE\CD@tJ\expandafter\upper@label\else\expandafter\lower@label\fi
-}\def\CD@tD{\futurelet\CD@EH\switch@arg}\def\eat@space{\afterassignment
\CD@tD\let\CD@EH= }\def\CD@TF{\afterassignment\CD@xD\let\CD@EH= }\def\CD@BC{%
\get@round@pair\CD@uD}\def\CD@uD#1#2{\def\CD@WK{#1}\def\CD@aK{#2}\CD@tD}\def
\def\CD@JJ.{\CD@sC\CD@tD}\def
\def\CD@MI{}\def\CD@@E#1,{\CD@nH#1,\begingroup\ifx\@name\CD@RD
\CD@FF\aftergroup\CD@e\fi\aftergroup\CD@jC\else\expandafter\def\expandafter
\CD@RF\expandafter{\csname\@name\endcsname}\expandafter\CD@vD\CD@RF\CD@KD\ifx
\CD@RF\empty\aftergroup\CD@pC\expandafter\aftergroup\csname\CD@FB\@name
\endcsname\expandafter\aftergroup\csname\CD@FB @\@name\endcsname\else\gdef
\CD@GE{#1}\CD@gB{\string\relax\space inserted before `[\CD@GE'}\message{(I was
trying to read this as a \CD@tA\ option.)}\aftergroup\CD@H\fi\fi\endgroup}%
\def\CD@vD#1#2\CD@KD{\def\CD@RF{#2}}\def\CD@jC{\let\CD@CH\CD@N\let\CD@N\relax
\CD@CH}\def\CD@H#1],{
\CD@jC\relax\def\CD@RF{#1}\ifx\CD@RF\empty\def\CD@RF{[\CD@GE]}%
\else\def\CD@RF{[\CD@GE,#1]}
\fi\CD@RF}\def\CD@pC#1#2{\ifx#2\CD@qK\ifx#1\CD@qK\CD@gB{option `\@name'
undefined}\else#1\fi\else\CD@FF\expandafter#2\CD@GK\CD@PK\else\CD@QK\fi\fi
\CD@DH}\CD@tG\CD@FF\CD@QK\CD@PK\def\CD@nH#1,{\CD@FF\ifx\CD@GK\CD@qK\CD@e\else
\expandafter\CD@oH\CD@GK,#1,(,),(,)[]%
\fi\fi\CD@FF\else\CD@mH#1==,\fi}\def\CD@e{\CD@gB{option `\@name' needs (x,y)
value}\CD@PK\let\@name\empty}\def\CD@mH#1=#2=#3,{\def\@name{#1}\def\CD@GK{#2}%
\def\CD@RF{#3}\ifx\CD@RF\empty\let\CD@GK\CD@qK\fi}%
\def\CD@oH#1(#2,#3)#4,(#5,#6)#7[]{\def\CD@GK{{#2}{#3}}\def\CD@RF{#1#4#5#6}%
\ifx\CD@RF\empty\def\CD@RF{#7}\ifx\CD@RF\empty\CD@e\fi\else\CD@e\fi}\def
\let\CD@N\relax\def\CD@zD#1{\ifx\CD@GK\CD@qK\CD@gB{option `\@name
' needs a value}\else#1\CD@GK\relax\fi}\def\CD@BE#1#2{\ifx\CD@GK\CD@qK#1#2%
\relax\else#1\CD@GK\relax\fi}\def\cds@@showpair#1#2{\message{x=#1,y=#2}}\def
\def\CD@DI#1{\def\CD@CH
{#1}\CD@nF{@x}{cdps@#1}\ifx\CD@CH\empty\CD@f\CD@CH{cannot be used}\else\ifx
\CD@CH\relax\CD@f\CD@CH{unknown}\else\let\CD@IK\@x\fi\fi}\def\CD@f#1#2{\CD@gB
{PostScript translator `#1' #2}}\def\CD@PH{}\def\CD@PJ{\CD@fA\edef\CD@PH{%
\noexpand\CD@KB{\@name\space ignored within maths}}}\def\diagramstyle{\CD@cJ
\let\CD@N\relax\CD@CF\CD@AE\CD@AE}\CD@tG\CD@sE
\CD@hG\CD@RC{cds@ }{}\CD@RC{cds@}{}\CD@RC
\def\cds@abut{\MapsAbut\dimen1\z@
\dimen5\z@}\def\cds@alignlabels{\CD@IA\CD@KA}\def\cds@amstex{\ifincommdiag
\CD@O\else\def\CD{\diagram[amstex]}
\fi\CD@T\catcode`\@\active}\def\cds@b{\let\CD@dB\CD@bB}\def\cds@balance{\let
\CD@hA\CD@AA}\let\cds@bottom\cds@b\def\cds@center{\cds@vcentre\cds@nobalance}%
\let\cds@centre\cds@center\def\cds@centerdisplay{\CD@HA\CD@PJ\cds@balance}%
\let\cds@centredisplay\cds@centerdisplay\def\cds@crab{\CD@BE\CD@DC{.5%
\PileSpacing}}\CD@RC{cds@crab-}{\CD@DC-.5\PileSpacing}\CD@RC{cds@crab+}{%
\def\cds@defaultsize{\CD@BE{\let\CD@QC}{3em}\CD@NJ
}\def\cds@displayoneliner{\CD@DB}\let\cds@dotted\CD@sC\def\cds@dpi{\CD@RJ{1%
truein}}\def\cds@dpm{\CD@RJ{100truecm}}\let\CD@XA\CD@qK\def\cds@eqno{\let
\CD@XA\CD@GK\let\CD@EJ\empty}\def\cds@fixed{\CD@qA}\CD@tG\CD@fE\CD@J\CD@I\def
\def\cds@gap
\CD@sI\CD@BE{\wd3=}\MapShortFall} \def
\relax\CD@gB{%
unknown grid `\CD@GK'}\else\CD@WB\fi\fi}\let\h@grid\relax\let\v@grid\relax
\def\cds@gridx{\ifx\CD@GK\CD@qK\else\cds@grid\fi\let\CD@CH\h@grid\let\h@grid
\v@grid\let\v@grid\CD@CH}\def\cds@h{\CD@zD\DiagramCellHeight}\def\cds@hcenter
\let\CD@hA\CD@aA}\let\cds@hcentre\cds@hcenter\def\cds@heads{\CD@BE{\let
\CD@sJ}\CD@sJ\CD@@J\CD@vE\else\ifx\CD@sJ\CD@eF\else\CD@MC\fi\fi}\let
\let\cds@hmiddle\cds@balance\def\cds@htriangleheight{\CD@BE
\DiagramCellHeight\DiagramCellHeight\DiagramCellWidth1.73205%
\DiagramCellHeight}\def\cds@htrianglewidth{\CD@BE\DiagramCellWidth
\DiagramCellWidth\DiagramCellHeight.57735\DiagramCellWidth}\CD@tG\CD@zE\CD@eE
\def\cds@hug{\CD@eE} \def\cds@inline{\CD@gA\let\CD@PH\empty}\def
\def\cds@labelstyle{\CD@zD{\let\labelstyle}}\def\cds@landscape{\CD@kA}\def
\let\CD@EJ\empty\def\CD@FJ{\refstepcounter{%
equation}\def\CD@XA{\hbox{\@eqnnum}}}\def\cds@LaTeXeqno{\let\CD@EJ\CD@FJ}\def
\def\cds@leftflush{\cds@flushleft\CD@J}\def
\def\cds@lowershortfall{%
\ifPositiveGradient\cds@leftshortfall\else\cds@rightshortfall\fi}\def
\def\cds@midhshaft{\CD@JA}\def\cds@midshaft{\CD@JA}\def
\def\cds@moreoptions{\CD@@A}\let\cds@nobalance
\def\cds@nohcheck{\CD@HH}\def\cds@nohug{\CD@dE} \def
\let\cds@noorigin\cds@nobalance\def
\def\cds@UO{\CD@oK\global\let\CD@n\empty}%
\def\cds@UglyObsolete{\cds@UO\let\cds@PS\empty}\def\CD@rK#1{\CD@gB{option `#1%
' renamed as `UglyObsolete'}}\def\cds@noPostScript{\CD@rK{noPostScript}}\def
\def\cds@notextflow{\CD@RB}\def\cds@noTPIC{%
\CD@CK}\def\cds@objectstyle{\CD@zD{\let\objectstyle}}\def\cds@origin{\let
\CD@hA\CD@iB}\def\cds@p{\CD@zD\PileSpacing}\let\cds@pilespacing\cds@p\def
\def\cds@portrait{\CD@jA}\def
\def\cds@PS{%
\CD@nK\global\let\CD@n\empty}\CD@GF\CD@n{\typeout{\CD@tA: try the PostScript
option for better results}}\def\cds@repositionpullbacks{\let\make@pbk\CD@fH
\let\CD@qH\CD@pH}\def\cds@righteqno{\CD@oA}\def\cds@rightshortfall{\CD@zD{%
\dimen5 }}\def\cds@ruleaxis{\CD@zD{\let\axisheight}}\def\cds@cmex{\let\CD@GG
\CD@sB\let\CD@QJ\CD@CJ}\def\cds@s{\cds@height\DiagramCellWidth
\DiagramCellHeight}\def\cds@scriptlabels{\let\labelstyle\scriptstyle}\def
\def\cds@showfirstpass{\CD@BE{\let\CD@nD}\z@}\def\cds@silent{\def\CD@KB##1{}%
\def\CD@gB##1{}}\let\cds@size\cds@s\def\cds@small{\CellSize2\CD@zC}\def
\def\cds@t{\let\CD@dB\CD@fB}\def\cds@textflow{%
\CD@SB\CD@PJ}\def\cds@thick{\let\CD@rF\tenlnw\CD@LF\CD@NC\CD@BE\MapBreadth{2%
\CD@LF}\CD@@J}\def\cds@thin{\let\CD@rF\tenln\CD@BE\MapBreadth{\CD@NC}\CD@@J}%
\def\cds@tight{\CD@WB}\let\cds@top\cds@t\def\cds@TPIC{\CD@DK}\def
\def\cds@vcenter{\let\CD@dB\CD@cB}\let\cds@vcentre
\def\cds@vtriangleheight{\CD@BE\DiagramCellHeight
\DiagramCellHeight\DiagramCellWidth.577035\DiagramCellHeight}\def
\def\cds@vmiddle{\let\CD@dB\CD@eB}%
\def\cds@w{\CD@zD\DiagramCellWidth}\let\cds@width\cds@w\def\diagram{\relax
\protect\CD@bC}\def\enddiagram{\protect\CD@SG}\def\CD@bC{\CD@g\CD@uI
\incommdiagtrue\edef\CD@wI{\the\CD@NB}\global\CD@NB\z@\boxmaxdepth\maxdimen
\everycr{}\CD@sK\everymath{}\everyhbox{}\ifx\pdfsyncstop\CD@qK\else
\pdfsyncstop\fi\CD@aC}\def\CD@aC{\CD@y\let\CD@N\CD@ZC\CD@CF\CD@AE\CD@WD}\def
\def\CD@WD{\let
\CD@EH\relax\CD@nE\CD@vE\else\CD@hK\else\CD@KB{landscape ignored without
PostScript}\CD@jA\fi\fi\fi\CD@EJ\setbox2=\vbox\bgroup\CD@JF\CD@VD}\def\CD@cH{%
\CD@nE\CD@fB\else\CD@dB\fi\CD@hA\nointerlineskip\setbox0=\null\ht0-\CD@pI\dp0%
\CD@pI\wd0\CD@kI\box0 \global\CD@QA\CD@kF\global\CD@yA\CD@XB\ifx\CD@NK\CD@qK
\global\CD@RA\CD@kF\else\global\CD@RA\CD@NK\fi\egroup\CD@zF\CD@nE\setbox2=%
\hbox to\dp2{\vrule height\wd2 depth\CD@QA width\z@\global\CD@QA\ht2\ht2\z@
\dp2\z@\wd2\z@\CD@hK\CD@tK{q 0 1 -1 0 0 0 cm}\else\global\CD@iG\CD@IK{0 1
bturn}\fi\box2\CD@gK\hss}\CD@DB\fi\ifnum\CD@yA=1 \else\CD@DB\fi\global
\@ignorefalse\CD@mE\leavevmode\fi\ifvmode\CD@TA\else\ifmmode\CD@PH\CD@GI\else
\CD@qE\CD@gA\fi\ifinner\CD@gA\fi\CD@mE\CD@GI\else\CD@sE\CD@QB\else\CD@TA\fi
\fi\fi\fi\CD@dD}\def\CD@dD{\global\CD@NB\CD@wI\relax\CD@xE\global\CD@ID\else
\aftergroup\CD@mC\fi\if@ignore\aftergroup\ignorespaces\fi\CD@wC\ignorespaces}%
\def\CD@fB{\advance\CD@pI\dimen1\relax}\def\CD@eB{\advance\CD@pI.5\dimen1%
\relax}\def\CD@bB{}\def\CD@cB{\CD@fB\advance\CD@pI\CD@YB\divide\CD@pI2
\advance\CD@pI-\axisheight\relax}\def\CD@aA{}\def\CD@iB{\CD@kF\z@}\def\CD@AA{%
\ifdim\dimen2>\CD@kF\CD@kF\dimen2 \else\dimen2\CD@kF\CD@kI\dimen0 \advance
\CD@kI\dimen2 \fi}\def\CD@QB{\skip0\z@\relax\loop\skip1\lastskip\ifdim\skip1>%
\z@\unskip\advance\skip0\skip1 \repeat\vadjust{\prevdepth\dp\strutbox\penalty
\predisplaypenalty\vskip\abovedisplayskip\CD@UA\penalty\postdisplaypenalty
\vskip\belowdisplayskip}\ifdim\skip0=\z@\else\hskip\skip0 \global\@ignoretrue
\fi}\def\CD@TA{\CD@LG\kern-\displayindent\CD@UA\CD@LG\global\@ignoretrue}\def
\z@\CD@KB{wider than the page by \the
\dimen0 }\CD@HA\fi\CD@iE\hss\else\CD@V\CD@QA\CD@nA\fi\CD@GI\hss\kern-\wd1\box
\def\CD@GI{\CD@AF\CD@@F\else\CD@SC\global\CD@hG\fi\fi\kern\CD@QA\box2 }%
\def\CD@JF{\CD@cJ\ifdim\DiagramCellHeight=-\maxdimen
\DiagramCellHeight\CD@QC\fi\ifdim\DiagramCellWidth=-\maxdimen
\DiagramCellWidth\CD@QC\fi\global\CD@XC\CD@IF\let\CD@FE\empty\let\CD@z\CD@Q
\let\overprint\CD@eH\let\CD@s\CD@rJ\let\enddiagram\CD@ED\let\\\CD@cC\let\par
\CD@jH\let\CD@MD\empty\let\switch@arg\CD@PB\let\shift\CD@iA\baselineskip
\DiagramCellHeight\lineskip\z@\lineskiplimit\z@\mathsurround\z@\tabskip\z@
\CD@OB}\def\CD@VD{\penalty-123 \begingroup\CD@jA\aftergroup\CD@K\halign
\bgroup\global\advance\CD@NB1 \vadjust{\penalty1}\global\CD@FA\z@\CD@OB\CD@j#%
#\CD@DD\CD@Q\CD@Q\CD@OI\CD@j##\CD@DD\cr}\def\CD@ED{\CD@MD\CD@GD\crcr\egroup
\global\CD@JD\endgroup}\def\CD@j{\global\advance\CD@FA1 \futurelet\CD@EH\CD@i
}\def\CD@i{\ifx\CD@EH\CD@DD\CD@tJ\hskip1sp plus 1fil \relax\let\CD@DD\relax
\CD@vI\else\hfil\CD@k\objectstyle\let\CD@FE\CD@d\fi}\def\CD@DD{\CD@MD\relax
\CD@yI\CD@vI\global\CD@QA\CD@iA\penalty-9993 \CD@ND\hfil\null\kern-2\CD@QA
\null}\def\CD@cC{\cr}\def\across#1{\span\omit\mscount=#1 \global\advance
\CD@FA\mscount\global\advance\CD@FA\m@ne\CD@sF\ifnum\mscount>2 \CD@fJ\repeat
\ignorespaces}\def\CD@fJ{\relax\span\omit\advance\mscount\m@ne}\def\CD@qJ{%
\ifincommdiag\ifx\CD@iD\@fillh\ifx\CD@jD\@fillh\ifdim\dimen3>\z@\else\ifdim
\dimen2>93\CD@@I\ifdim\dimen2>18\p@\ifdim\CD@LF>\z@\count@\CD@bJ\advance
\count@\m@ne\ifnum\count@<\z@\count@20\let\CD@aJ\CD@uJ\fi\xdef\CD@bJ{\the
\count@}\fi\fi\fi\fi\fi\fi\fi}\def\CD@cG#1{\vrule\horizhtdp width#1\dimen@
\kern2\dimen@}\def\CD@uJ{\rlap{\dimen@\CD@@I\CD@V\dimen@{.182\p@}\CD@zH
\dimen@\advance\CD@tI\dimen@\CD@cG0\CD@cG0\CD@cG2\CD@cG6\CD@cG6\CD@cG2\CD@cG0%
\CD@cG0\CD@cG2\CD@cG6\CD@cG0\CD@cG0\CD@cG2\CD@cG2\CD@cG6\CD@cG0\CD@cG0\CD@cG2%
\CD@cG6\CD@cG2\CD@cG2\CD@cG0\CD@cG0}}\def\CD@bJ{10}\def\CD@aJ{}\def\CD@XD{%
\CD@gE\CD@TB\fi\CD@x\CD@WF\CD@HI}\def\CD@x{\CD@QJ\CD@DC\CD@MJ\ifdim\CD@DC=\z@
\else\CD@pF\CD@DC\fi\ifvoid3 \setbox3=\null\ht3\CD@tI\dp3\CD@sI\else\CD@V{\ht
3}\CD@tI\CD@V{\dp3}\CD@sI\fi\dimen3=.5\wd3 \ifdim\dimen3=\z@\CD@tE\else\dimen
3-\CD@XH\fi\else\CD@TB\fi\CD@V{\dimen2}{\wd7}\CD@V{\dimen2}{\wd6}\CD@qJ
\advance\dimen2-2\dimen3 \dimen4.5\dimen2 \dimen2\dimen4 \advance\dimen2%
\CD@eJ\advance\dimen4-\CD@eJ\advance\dimen2-\wd1 \advance\dimen4-\wd5 \ifvoid
2 \else\CD@V{\ht3}{\ht2}\CD@V{\dp3}{\dp2}\CD@V{\dimen2}{\wd2}\fi\ifvoid4 \else
\CD@V{\ht3}{\ht4}\CD@V{\dp3}{\dp4}\CD@V{\dimen4}{\wd4}\fi\advance\skip2\dimen
2 \advance\skip4\dimen4 \CD@tE\advance\skip2\skip4 \dimen0\dimen5 \advance
\dimen0\wd5 \skip3-\skip4 \advance\skip3-\dimen0 \let\CD@jD\empty\else\skip3%
\z@\relax\dimen0\z@\fi}\def\CD@WF{\offinterlineskip\lineskip.2\CD@zC\ifvoid6
\else\setbox3=\vbox{\hbox to2\dimen3{\hss\box6\hss}\box3}\fi\ifvoid7 \else
\setbox3=\vtop{\box3 \hbox to2\dimen3{\hss\box7\hss}}\fi}\def\CD@HI{\kern
\dimen1 \box1 \CD@aJ\CD@iD\hskip\skip2 \kern\dimen0 \ifincommdiag\CD@jE
\penalty1\fi\kern\dimen3 \penalty\CD@GB\hskip\skip3 \null\kern-\dimen3 \else
\hskip\skip3 \fi\box3 \CD@jD\hskip\skip4 \box5 \kern\dimen5}\def\CD@MF{\ifnum
\CD@LH>\CD@TC\CD@V{\dimen1}\objectheight\CD@V{\dimen5}\objectheight\else\CD@V
{\dimen1}\objectwidth\CD@V{\dimen5}\objectwidth\fi}\def\CD@Y{\begingroup
\ifdim\dimen7=\z@\kern\dimen8 \else\ifdim\dimen6=\z@\kern\dimen9 \else\dimen5%
\dimen6 \dimen6\dimen9 \CD@KJ\dimen4\dimen2 \CD@dG{\dimen4}\dimen6\dimen5
\dimen7\dimen8 \CD@KJ\CD@iC{\dimen2}\ifdim\dimen2<\dimen4 \kern\dimen2 \else
\kern\dimen4 \fi\fi\fi\endgroup}\def\CD@jJ{\CD@JI\setbox\z@\hbox{\lower
\axisheight\hbox to\dimen2{\CD@DF\ifPositiveGradient\dimen8\ht\CD@MH\dimen9%
\CD@mI\else\dimen8\dp3 \dimen9\dimen1 \fi\else\dimen8 \ifPositiveGradient
\objectheight\else\z@\fi\dimen9\objectwidth\fi\advance\dimen8
\ifPositiveGradient-\fi\axisheight\CD@Y\unhbox\z@\CD@DF\ifPositiveGradient
\dimen8\dp3 \dimen9\dimen0 \else\dimen8\ht\CD@MH\dimen9\CD@mF\fi\else\dimen8
\ifPositiveGradient\z@\else\objectheight\fi\dimen9\objectwidth\fi\advance
\dimen8 \ifPositiveGradient\else-\fi\axisheight\CD@Y}}}\def\CD@bD{\dimen6
\CD@aK\DiagramCellHeight\dimen7 \CD@WK\DiagramCellWidth\CD@jJ
\ifPositiveGradient\advance\dimen7-\CD@ZK\DiagramCellWidth\else\dimen7 \CD@ZK
\DiagramCellWidth\dimen6\z@\fi\advance\dimen6-\CD@bK\DiagramCellHeight\CD@mK
\setbox0=\rlap{\kern-\dimen7 \lower\dimen6\box\z@}\ht0\z@\dp0\z@\raise
\axisheight\box0 }\def\CD@mK{\setbox0\hbox{\ht\z@\z@\dp\z@\z@\wd\z@\z@\CD@hK
\expandafter\CD@tK{q \CD@eK\space\CD@lK\space\CD@kK\space\CD@eK\space0 0 cm}%
\else\global\CD@iG\CD@eD{\the\CD@TC\space\ifPositiveGradient\else-\fi\the
\CD@LH\space bturn}\fi\box\z@\CD@gK}}\def\CD@vB{\advance\CD@hF-\CD@mI\CD@wJ
\CD@hF\advance\CD@wJ\CD@hI\ifvoid\CD@sH\ifdim\CD@wJ<.1em\ifnum\CD@gD=\@m\else
\CD@aG h\CD@wJ<.1em:objects overprint:\CD@FA\CD@gD\fi\fi\else\ifhbox\CD@sH
\CD@SK\else\CD@TK\fi\advance\CD@wJ\CD@mI\CD@bH{-\CD@mI}{\box\CD@sH}{\CD@wJ}%
\z@\fi\CD@hF-\CD@mF\CD@gD\CD@FA\CD@hI\z@}\def\CD@SK{\setbox\CD@sH=\hbox{%
\unhbox\CD@sH\unskip\unpenalty}\setbox\CD@tH=\hbox{\unhbox\CD@tH\unskip
\unpenalty}\setbox\CD@sH=\hbox to\CD@wJ{\CD@OA\wd\CD@sH\unhbox\CD@sH\CD@PA
\lastkern\unkern\ifdim\CD@PA=\z@\CD@UB\advance\CD@OA-\wd\CD@tH\else\CD@TB\fi
\ifnum\lastpenalty=\z@\else\CD@JA\unpenalty\fi\kern\CD@PA\ifdim\CD@hF<\CD@OA
\CD@JA\fi\ifdim\CD@hI<\wd\CD@tH\CD@JA\fi\CD@jE\CD@hI\CD@wJ\advance\CD@hI-%
\CD@OA\advance\CD@hI\wd\CD@tH\ifdim\CD@hI<2\wd\CD@tH\CD@aG h\CD@hI<2\wd\CD@tH
:arrow too short:\CD@FA\CD@gD\fi\divide\CD@hI\tw@\CD@hF\CD@wJ\advance\CD@hF-%
\CD@hI\fi\CD@tE\kern-\CD@hI\fi\hbox to\CD@hI{\unhbox\CD@tH}\CD@HG}}\CD@tG
\def\pile{\protect\CD@UJ\protect
\CD@uH}\def\CD@uH#1{\CD@l#1\CD@QD}\def\CD@UJ{\CD@nB{pile}\setbox0=\vtop
\bgroup\aftergroup\CD@lD\inpiletrue\let\CD@FE\empty\let\pile\CD@KF\let\CD@QD
\CD@PD\let\CD@GD\CD@FD\CD@yH\baselineskip.5\PileSpacing\lineskip.1\CD@zC
\relax\lineskiplimit\lineskip\mathsurround\z@\tabskip\z@\let\\\CD@wH}\def
\CD@rG\CD@NE{pile only allows one column.}%
\CD@rG\CD@UE{you left it out!}\def\CD@R{\CD@QD\CD@Q\relax\CD@YA{missing \CD@yC
\space inserted after \string\pile}\CD@NE}\def\CD@PD{\CD@MD\crcr\egroup
\egroup}\def\CD@GD{\CD@MD}\def\CD@FD{\CD@MD\relax\CD@QD\CD@YA{missing \CD@yC
\space inserted between \string\pile\space and \CD@HD}\CD@UE}\def\CD@QD{%
\CD@MD}\def\CD@lD{\vbox{\dimen1\dp0 \unvbox0 \setbox0=\lastbox\advance\dimen1%
\dp0 \nointerlineskip\box0 \nointerlineskip\setbox0=\null\dp0.5\dimen1\ht0-%
\dp0 \box0}\ifincommdiag\CD@tJ\penalty-9998 \fi\xdef\CD@YF{pile}}\def\CD@vH{%
\cr}\def\CD@wH{\noalign{\skip@\prevdepth\advance\skip@-\baselineskip
\prevdepth\skip@}}\def\CD@KF#1{#1}\def\CD@TK{\setbox\CD@sH=\vbox{\unvbox
\CD@sH\setbox1=\lastbox\setbox0=\box\voidb@x\CD@tF\setbox\CD@sH=\lastbox
\ifhbox\CD@sH\CD@rC\repeat\unvbox0 \global\CD@QA\CD@ZE}\CD@ZE\CD@QA}\def
\def\CD@gJ{\penalty7
\noindent\unhbox\CD@sH\unskip\setbox\CD@sH=\lastbox\unskip\unhbox\CD@sH
\endgraf\setbox\CD@tH=\lastbox\unskip\setbox\CD@tH=\hbox{\CD@JG\unhbox\CD@tH
\unskip\unskip\unpenalty}\ifcase\prevgraf\cd@shouldnt P\or\ifdim\CD@wJ<\wd
\CD@tH\CD@aG h\CD@wJ<\wd\CD@sH:object in pile too wide:\CD@FA\CD@gD\setbox
\CD@sH=\hbox to\CD@wJ{\hss\unhbox\CD@tH\hss}\else\setbox\CD@sH=\hbox to\CD@wJ
{\hss\kern\CD@hF\unhbox\CD@tH\kern\CD@hI\hss}\fi\or\setbox\CD@sH=\lastbox
\unskip\CD@SK\else\cd@shouldnt Q\fi\unskip\unpenalty}\def\CD@cD{\CD@MJ\ifvoid
3 \setbox3=\null\ht3\axisheight\dp3-\ht3 \dimen3.5\CD@LF\else\dimen4\dp3
\dimen3.5\wd3 \setbox3=\CD@GG{\box3}\dp3\dimen4 \ifdim\ht3=-\dp3 \else\CD@TB
\fi\fi\dimen0\dimen3 \advance\dimen0-.5\CD@LF\setbox0\null\ht0\ht3\dp0\dp3\wd
0\wd3 \ifvoid6\else\setbox6\hbox{\unhbox6\kern\dimen0\kern2pt}\dimen0\wd6 \fi
\ifvoid7\else\setbox7\hbox{\kern2pt\kern\dimen3\unhbox7}\dimen3\wd7 \fi
\setbox3\hbox{\ifvoid6\else\kern-\dimen0\unhbox6\fi\unhbox3 \ifvoid7\else
\unhbox7\kern-\dimen3\fi}\ht3\ht0\dp3\dp0\wd3\wd0 \CD@tE\dimen4=\ht\CD@MH
\advance\dimen4\dp5 \advance\dimen4\dimen1 \let\CD@jD\empty\else\dimen4\ht3
\fi\setbox0\null\ht0\dimen4 \offinterlineskip\setbox8=\vbox spread2ex{\kern
\dimen5 \box1 \CD@iD\vfill\CD@tE\else\kern\CD@eJ\fi\box0}\ht8=\z@\setbox9=%
\vtop spread2ex{\kern-\ht3 \kern-\CD@eJ\box3 \CD@jD\vfill\box5 \kern\dimen1}%
\dp9=\z@\hskip\dimen0plus.0001fil \box9 \kern-\CD@LF\box8 \CD@kE\penalty2 \fi
\CD@tE\penalty1 \fi\kern\PileSpacing\kern-\PileSpacing\kern-.5\CD@LF\penalty
\CD@GB\null\kern\dimen3}\def\CD@cI{\ifhbox\CD@VA\CD@KB{clashing verticals}\ht
\CD@MH.5\dp\CD@VA\dp\CD@MH-\ht5 \CD@yB\ht\CD@MH\z@\dp\CD@MH\z@\fi\dimen1\dp
\CD@VA\CD@xA\prevgraf\unvbox\CD@VA\CD@wA\lastpenalty\unpenalty\setbox\CD@VA=%
\null\setbox\CD@lI=\hbox{\CD@JG\unhbox\CD@lI\unskip\unpenalty\dimen0\lastkern
\unkern\unkern\unkern\kern\dimen0 \CD@HG}\setbox\CD@lF=\hbox{\unhbox\CD@lF
\dimen0\lastkern\unkern\unkern\global\CD@QA\lastkern\unkern\kern\dimen0 }%
\CD@tF\ifnum\CD@xA>4 \CD@zI\repeat\unskip\unskip\advance\CD@mF.5\wd\CD@VA
\advance\CD@mF\wd\CD@lF\advance\CD@mI.5\wd\CD@VA\advance\CD@mI\wd\CD@lI\ifnum
\CD@FA=\CD@lA\CD@OA.5\wd\CD@VA\edef\CD@NK{\the\CD@OA}\fi\setbox\CD@VA=\hbox{%
\kern-\CD@mF\box\CD@lF\unhbox\CD@VA\box\CD@lI\kern-\CD@mI\penalty\CD@wA
\penalty\CD@NB}\ht\CD@VA\dimen1 \dp\CD@VA\z@\wd\CD@VA\CD@tB\CD@vB}\def\CD@zI{%
\ifdim\wd\CD@lF<\CD@QA\setbox\CD@lF=\hbox to\CD@QA{\CD@JG\unhbox\CD@lF}\fi
\advance\CD@xA\m@ne\setbox\CD@VA=\hbox{\box\CD@lF\unhbox\CD@VA}\unskip\setbox
\CD@lF=\lastbox\setbox\CD@lF=\hbox{\unhbox\CD@lF\unskip\unpenalty\dimen0%
\lastkern\unkern\unkern\global\CD@QA\lastkern\unkern\kern\dimen0 }}\def\CD@yB
\def\CD@zB{\unvbox\CD@VA
\CD@wA\lastpenalty\unpenalty\ifdim\dimen1<\ht\CD@MH\CD@aG v\dimen1<\ht\CD@MH:%
rows overprint:\CD@NB\CD@wA\fi}\def\CD@xB{\dimen0=\ht\CD@VA\setbox\CD@VA=%
\hbox\bgroup\advance\dimen1-\ht\CD@MH\unhbox\CD@VA\CD@xA\lastpenalty
\unpenalty\CD@wA\lastpenalty\unpenalty\global\CD@RA-\lastkern\unkern\setbox0=%
\lastbox\CD@tF\setbox\CD@VA=\hbox{\box0\unhbox\CD@VA}\setbox0=\lastbox\ifhbox
0 \CD@kJ\repeat\global\CD@SA-\lastkern\unkern\global\CD@QA\CD@JK\unhbox\CD@VA
\egroup\CD@JK\CD@QA\CD@bH{\CD@SA}{\box\CD@VA}{\CD@RA}{\dimen1}}\def\CD@kJ{%
\setbox0=\hbox to\wd0\bgroup\unhbox0 \unskip\unpenalty\dimen7\lastkern\unkern
\ifnum\lastpenalty=1 \unpenalty\CD@UB\else\CD@TB\fi\ifnum\lastpenalty=2
\unpenalty\dimen2.5\dimen0\advance\dimen2-.5\dimen1\advance\dimen2-%
\axisheight\else\dimen2\z@\fi\setbox0=\lastbox\dimen6\lastkern\unkern\setbox1%
=\lastbox\setbox0=\vbox{\unvbox0 \CD@tE\kern-\dimen1 \else\ifdim\dimen2=\z@
\else\kern\dimen2 \fi\fi}\ifdim\dimen0<\ht0 \CD@aG v\dimen0<\ht0:upper part of
vertical too short:{\CD@tE\CD@NB\else\CD@wA\fi}\CD@xA\else\setbox0=\vbox to%
\dimen0{\unvbox0}\fi\setbox1=\vtop{\unvbox1}\ifdim\dimen1<\dp1 \CD@aG v\dimen
1<\dp1:lower part of vertical too short:\CD@NB\CD@wA\else\setbox1=\vtop to%
\dimen1{\ifdim\dimen2=\z@\else\kern-\dimen2 \fi\unvbox1 }\fi\box1 \kern\dimen
6 \box0 \kern\dimen7 \CD@HG\global\CD@QA\CD@JK\egroup\CD@JK\CD@QA\relax}%
\let\CD@LB
\let\CD@mA\CD@XB\newcount\CD@MB\CD@tG
\def\CD@nD{-1}\def\CD@K{\ifnum\CD@nD<\z@\else
\begingroup\scrollmode\showboxdepth\CD@nD\showboxbreadth\maxdimen\showlists
\endgroup\fi\CD@bI\CD@zF\CD@CA=\CD@u\advance\CD@CA1 \CD@XB=\CD@CA\ifnum\CD@NB
=1 \CD@JA\fi\advance\CD@XB\CD@NB\dimen1\z@\skip0\z@\count@=\insc@unt\advance
\count@\CD@u\divide\count@2 \ifnum\CD@XB>\count@\CD@KB{The diagram has too
many rows! It can't be reformatted.}\else\CD@NG\CD@WI\fi\CD@cH}\def\CD@NG{%
\CD@NB\CD@CA\CD@uF\ifnum\CD@NB<\CD@XB\setbox\CD@NB\box\voidb@x\advance\CD@NB1%
\relax\repeat\CD@NB\CD@CA\skip\z@\z@\CD@uF\CD@GB\lastpenalty\unpenalty\ifnum
\CD@GB>\z@\CD@KE\repeat\ifnum\CD@GB=-123 \CD@tJ\unpenalty\else\cd@shouldnt D%
\fi\ifx\v@grid\relax\else\CD@NB\CD@XB\advance\CD@NB\m@ne\expandafter\CD@VJ
\v@grid\fi\CD@MB\CD@mA\CD@tB\z@\CD@XG\ifx\h@grid\relax\else\expandafter\CD@LJ
\h@grid\fi\count@\CD@XB\advance\count@\m@ne\CD@YB\ht\count@}\def\CD@KE{%
\ifcase\CD@GB\or\CD@MG\else\CD@uA-\lastpenalty\unpenalty\CD@vA\lastpenalty
\unpenalty\setbox0=\lastbox\CD@WG\fi\CD@wD}\def\CD@wD{\skip1\lastskip\unskip
\advance\skip0\skip1 \ifdim\skip1=\z@\else\expandafter\CD@wD\fi}\def\CD@MG{%
\setbox0=\lastbox\CD@pI\dp0 \advance\CD@pI\skip\z@\skip\z@\z@\advance\CD@NF
\CD@pI\CD@uE\ifnum\CD@NB>\CD@CA\CD@NF\DiagramCellHeight\CD@pI\CD@NF\advance
\CD@pI-\CD@qI\fi\fi\CD@qI\ht0 \CD@NF\CD@qI\setbox\CD@NB\hbox{\unhbox\CD@NB
\unhbox0}\dp\CD@NB\CD@pI\ht\CD@NB\CD@qI\advance\CD@NB1 }\def\CD@WG{\ifnum
\CD@uA<\z@\advance\CD@uA\CD@XB\ifnum\CD@uA<\CD@CA\CD@UG\else\CD@OA\dp\CD@uA
\CD@PA\ht\CD@uA\setbox\CD@uA\hbox{\box\z@\penalty\CD@vA\penalty\CD@GB\unhbox
\CD@uA}\dp\CD@uA\CD@OA\ht\CD@uA\CD@PA\fi\else\CD@UG\fi}\def\CD@UG{\CD@KB{%
diagonal goes outside diagram (lost)}}\def\CD@fI{\advance\CD@uA\CD@XB\ifnum
\CD@uA<\CD@CA\CD@UG\else\ifnum\CD@uA=\CD@NB\CD@VG\else\ifnum\CD@uA>\CD@NB
\cd@shouldnt M\else\CD@OA\dp\CD@uA\CD@PA\ht\CD@uA\setbox\CD@uA\hbox{\box\z@
\penalty\CD@vA\penalty\CD@GB\unhbox\CD@uA}\dp\CD@uA\CD@OA\ht\CD@uA\CD@PA\fi
\fi\fi}\def\CD@WI{\CD@t\CD@AJ\setbox\CD@PC=\hbox{\CD@k A\@super f\CD@lJ f%
\CD@ND}\CD@ZE\z@\CD@JK\z@\CD@kI\z@\CD@kF\z@\CD@NB=\CD@XB\CD@NF\z@\CD@uB\z@
\CD@uF\ifnum\CD@NB>\CD@CA\advance\CD@NB\m@ne\CD@qI\ht\CD@NB\CD@pI\dp\CD@NB
\advance\CD@NF\CD@qI\CD@rI\advance\CD@uB\CD@NF\CD@KC\CD@ZI\CD@w\ht\CD@NB
\CD@qI\dp\CD@NB\CD@pI\nointerlineskip\box\CD@NB\CD@NF\CD@pI\setbox\CD@NB\null
\ht\CD@NB\CD@uB\repeat\CD@wB\nointerlineskip\box\CD@NB\CD@gG\CD@ZE
\DiagramCellWidth{width}\CD@gG\CD@JK\DiagramCellHeight{height}\CD@VA\CD@LB
\advance\CD@VA-\CD@lA\advance\CD@VA\m@ne\advance\CD@VA\CD@mA\dimen0\wd\CD@VA
\CD@tI\axisheight\dimen1\CD@uB\advance\dimen1-\CD@YB\dimen2\CD@kI\advance
\dimen2-\dimen0 \advance\CD@XB-\CD@CA\advance\CD@LB-\CD@lA}\count@\year
\def\CD@wB{\CD@qI-\CD@NF\CD@pI\CD@NF
\setbox\CD@MH=\null\dp\CD@MH\CD@NF\ht\CD@MH-\CD@NF\CD@mF\z@\CD@mI\z@\CD@lA
\CD@LB\advance\CD@lA-\CD@MB\advance\CD@lA\CD@mA\CD@FA\CD@LB\CD@VA\CD@MB\CD@sF
\ifnum\CD@FA>\CD@lA\advance\CD@FA\m@ne\advance\CD@VA\m@ne\CD@tB\wd\CD@VA
\setbox\CD@FA=\box\voidb@x\CD@yB\repeat\CD@w\ht\CD@NB\CD@qI\dp\CD@NB\CD@pI}%
\def\CD@gG#1#2#3{\ifdim#1>.01\CD@zC\CD@PA#2\relax\advance\CD@PA#1\relax
\advance\CD@PA.99\CD@zC\count@\CD@PA\divide\count@\CD@zC\CD@KB{increase cell #%
3 to \the\count@ em}\fi}\def\CD@rI{\CD@FA=\CD@LB\penalty4 \noindent\unhbox
\CD@NB\CD@sF\unskip\setbox0=\lastbox\ifhbox0 \advance\CD@FA\m@ne\setbox\CD@FA
\hbox to\wd0{\null\penalty-9990\null\unhbox0}\repeat\CD@lA\CD@FA\advance
\CD@FA\CD@MB\advance\CD@FA-\CD@mA\ifnum\CD@FA<\CD@LB\count@\CD@FA\advance
\count@\m@ne\dimen0=\wd\count@\count@\CD@MB\advance\count@\m@ne\CD@tB\wd
\count@\CD@sF\ifnum\CD@FA<\CD@LB\CD@DJ\CD@XG\dimen0\wd\CD@FA\advance\CD@FA1
\repeat\fi\CD@sF\CD@GB\lastpenalty\unpenalty\ifnum\CD@GB>\z@\CD@vA
\lastpenalty\unpenalty\CD@VG\repeat\endgraf\unskip\ifnum\lastpenalty=4
\unpenalty\else\cd@shouldnt S\fi}\def\CD@VG{\advance\CD@vA\CD@lA\advance
\CD@vA\m@ne\setbox0=\lastbox\ifnum\CD@vA<\CD@LB\setbox\CD@vA\hbox{\box0%
\penalty\CD@GB\unhbox\CD@vA}\else\CD@UG\fi}\def\CD@bG{}\CD@tG\CD@uE\CD@WB
\def\CD@DJ{\advance\dimen0\wd\CD@FA\divide\dimen0\tw@\CD@uE\dimen0%
\DiagramCellWidth\else\CD@V{\dimen0}\DiagramCellWidth\CD@pJ\fi\advance\CD@tB
\dimen0 }\def\CD@XG{\setbox\CD@MB=\vbox{}\dp\CD@MB=\CD@uB\wd\CD@MB\CD@tB
\advance\CD@MB1 }\def\CD@LJ#1,{\def\CD@GK{#1}\ifx\CD@GK\CD@RD\else\advance
\CD@tB\CD@GK\DiagramCellWidth\CD@XG\expandafter\CD@LJ\fi}\def\CD@VJ#1,{\def
\CD@GK{#1}\ifx\CD@GK\CD@RD\else\ifnum\CD@NB>\CD@CA\CD@NF\CD@GK
\DiagramCellHeight\advance\CD@NF-\dp\CD@NB\advance\CD@NB\m@ne\ht\CD@NB\CD@NF
\fi\expandafter\CD@VJ\fi}\def\CD@pJ{\CD@wE\CD@OA\dimen0 \advance\CD@OA-%
\DiagramCellWidth\ifdim\CD@OA>2\MapShortFall\CD@KB{badly drawn diagonals (see
manual)}\let\CD@pJ\empty\fi\else\let\CD@pJ\empty\fi}\def\CD@KC{\CD@VA\CD@mA
\CD@sF\ifnum\CD@VA<\CD@MB\dimen0\dp\CD@VA\advance\dimen0\CD@NF\dp\CD@VA\dimen
0 \advance\CD@VA1 \repeat}\def\CD@bH#1#2#3#4{\ifnum\CD@FA<\CD@LB\CD@OA=#1%
\relax\setbox\CD@FA=\hbox{\setbox0=#2\dimen7=#4\relax\dimen8=#3\relax\ifhbox
\CD@FA\unhbox\CD@FA\advance\CD@OA-\lastkern\unkern\fi\ifdim\CD@OA=\z@\else
\kern-\CD@OA\fi\raise\dimen7\box0 \kern-\dimen8 }\ifnum\CD@FA=\CD@lA\CD@V
\CD@kF\CD@OA\fi\else\cd@shouldnt O\fi}\def\CD@w{\setbox\CD@NB=\hbox{\CD@FA
\CD@lA\CD@VA\CD@mA\CD@PA\z@\relax\CD@sF\ifnum\CD@FA<\CD@LB\CD@tB\wd\CD@VA
\relax\CD@eI\advance\CD@FA1 \advance\CD@VA1 \repeat}\CD@V\CD@kI{\wd\CD@NB}\wd
\CD@NB\z@}\def\CD@eI{\ifhbox\CD@FA\CD@OA\CD@tB\relax\advance\CD@OA-\CD@PA
\relax\ifdim\CD@OA=\z@\else\kern\CD@OA\fi\CD@PA\CD@tB\advance\CD@PA\wd\CD@FA
\relax\unhbox\CD@FA\advance\CD@PA-\lastkern\unkern\fi}\def\CD@ZI{\setbox
\CD@sH=\box\voidb@x\CD@VA=\CD@MB\CD@FA\CD@LB\CD@VA\CD@mA\advance\CD@VA\CD@FA
\advance\CD@VA-\CD@lA\advance\CD@VA\m@ne\CD@tB\wd\CD@VA\count@\CD@LB\advance
\count@\m@ne\CD@hF.5\wd\count@\advance\CD@hF\CD@tB\CD@A\m@ne\CD@gD\@m\CD@sF
\ifnum\CD@FA>\CD@lA\advance\CD@FA\m@ne\advance\CD@hF-\CD@tB\CD@PI\wd\CD@VA
\CD@tB\advance\CD@hF\CD@tB\advance\CD@VA\m@ne\CD@tB\wd\CD@VA\repeat\CD@mF
\CD@kF\CD@mI-\CD@mF\CD@vB}\newcount\CD@GB\def\CD@s{}\def\CD@t{\mathsurround
\z@\hsize\z@\rightskip\z@ plus1fil minus\maxdimen\parfillskip\z@\linepenalty
9000 \looseness0 \hfuzz\maxdimen\hbadness10000 \clubpenalty0 \widowpenalty0
\displaywidowpenalty0 \interlinepenalty0 \predisplaypenalty0
\postdisplaypenalty0 \interdisplaylinepenalty0 \interfootnotelinepenalty0
\floatingpenalty0 \brokenpenalty0 \everypar{}\leftskip\z@\parskip\z@
\parindent\z@\pretolerance10000 \tolerance10000 \hyphenpenalty10000
\exhyphenpenalty10000 \binoppenalty10000 \relpenalty10000 \adjdemerits0
\doublehyphendemerits0 \finalhyphendemerits0 \baselineskip\z@\CD@IA\prevdepth
\z@}\newbox\CD@KG\newbox\CD@IG\def\CD@JG{\unhcopy\CD@KG}\def\CD@HG{\unhcopy
\CD@IG}\def\CD@iJ{\hbox{}\penalty1\nointerlineskip}\def\CD@PI{\penalty5
\noindent\setbox\CD@MH=\null\CD@mF\z@\CD@mI\z@\ifnum\CD@FA<\CD@LB\ht\CD@MH\ht
\CD@FA\dp\CD@MH\dp\CD@FA\unhbox\CD@FA\skip0=\lastskip\unskip\else\CD@OK\skip0%
=\z@\fi\endgraf\ifcase\prevgraf\cd@shouldnt Y \or\cd@shouldnt Z \or\CD@RI\or
\CD@XI\else\CD@QI\fi\unskip\setbox0=\lastbox\unskip\unskip\unpenalty\noindent
\unhbox0\setbox0\lastbox\unpenalty\unskip\unskip\unpenalty\setbox0\lastbox
\CD@tF\CD@GB\lastpenalty\unpenalty\ifnum\CD@GB>\z@\setbox\z@\lastbox\CD@lB
\repeat\endgraf\unskip\unskip\unpenalty}\def\CD@YJ{\CD@uA\CD@XB\advance\CD@uA
-\CD@NB\CD@vA\CD@FA\advance\CD@vA-\CD@lA\advance\CD@vA1 \expandafter\message{%
prevgraf=\the\prevgraf at (\the\CD@uA,\the\CD@vA)}}\def\CD@XI{\CD@CE\setbox
\CD@lI=\lastbox\setbox\CD@lI=\hbox{\unhbox\CD@lI\unskip\unpenalty}\unskip
\ifdim\ht\CD@lI>\ht\CD@PC\setbox\CD@MH=\copy\CD@lI\else\ifdim\dp\CD@lI>\dp
\CD@PC\setbox\CD@MH=\copy\CD@lI\else\CD@FG\CD@lI\fi\fi\advance\CD@mF.5\wd
\CD@lI\advance\CD@mI.5\wd\CD@lI\setbox\CD@lI=\hbox{\unhbox\CD@lI\CD@HG}\CD@bH
\CD@mF{\box\CD@lI}\CD@mI\z@\CD@yB\CD@vB}\def\CD@CE{\ifnum\CD@A>0 \advance
\dimen0-\CD@tB\CD@iA-.5\dimen0 \CD@A-\CD@A\else\CD@A0 \CD@iA\z@\fi\setbox
\CD@MH=\lastbox\setbox\CD@MH=\hbox{\unhbox\CD@MH\unskip\unskip\unpenalty
\setbox0=\lastbox\global\CD@QA\lastkern\unkern}\advance\CD@iA-.5\CD@QA\unskip
\setbox\CD@MH=\null\CD@mI\CD@iA\CD@mF-\CD@iA}\def\CD@Z{\ht\CD@MH\CD@tI\dp
\CD@MH\CD@sI}\def\CD@FG#1{\setbox\CD@MH=\hbox{\CD@V{\ht\CD@MH}{\ht#1}\CD@V{%
\dp\CD@MH}{\dp#1}\CD@V{\wd\CD@MH}{\wd#1}\vrule height\ht\CD@MH depth\dp\CD@MH
width\wd\CD@MH}}\def\CD@QI{\CD@CE\CD@Z\setbox\CD@lI=\lastbox\unskip\setbox
\CD@lF=\lastbox\unskip\setbox\CD@lF=\hbox{\unhbox\CD@lF\unskip\global\CD@yA
\lastpenalty\unpenalty}\advance\CD@yA9999 \ifcase\CD@yA\CD@VI\or\CD@YI\or
\CD@TI\or\CD@dI\or\CD@cI\or\CD@SI\else\cd@shouldnt9\fi}\def\CD@VI{\CD@FG
\CD@lI\CD@UI\setbox\CD@sH=\box\CD@lF\setbox\CD@tH=\box\CD@lI}\def\CD@YI{%
\CD@FG\CD@lF\setbox\CD@lI\hbox{\penalty8 \unhbox\CD@lI\unskip\unpenalty\ifnum
\lastpenalty=8 \else\CD@xH\fi}\CD@UI\setbox\CD@lF=\hbox{\unhbox\CD@lF\unskip
\unpenalty\global\setbox\CD@DA=\lastbox}\ifdim\wd\CD@lF=\z@\else\CD@xH\fi
\setbox\CD@sH=\box\CD@DA}\def\CD@xH{\CD@KB{extra material in \string\pile
\space cell (lost)}}\def\CD@UI{\CD@yB\ifvoid\CD@sH\else\CD@KB{Clashing
horizontal arrows}\CD@mI.5\CD@hF\CD@mF-\CD@mI\CD@vB\CD@mI\z@\CD@mF\z@\fi
\CD@hI\CD@hF\advance\CD@hI-\CD@mI\CD@hF-\CD@mF\CD@JC\CD@FA}\def\CD@RI{\setbox
0\lastbox\unskip\CD@iA\z@\CD@Z\ifdim\skip0>\z@\CD@tJ\CD@A0 \else\ifnum\CD@A<1
\CD@A0 \dimen0\CD@tB\fi\advance\CD@A1 \fi}\def\VonH{\CD@MA46\VonH{.5\CD@LF}}%
\def\HonV{\CD@MA57\HonV{.5\CD@LF}}\def\HmeetV{\CD@MA44\HmeetV{-\MapShortFall}%
}\def\CD@MA#1#2#3#4{\CD@pB34#1{\string#3}\CD@SD\CD@GB-999#2 \dimen0=#4\CD@tI
\dimen0\advance\CD@tI\axisheight\CD@sI\dimen0\advance\CD@sI-\axisheight\CD@CF
\CD@HC\CD@ZD}\def\CD@HC#1{\setbox0=\hbox{\CD@k#1\CD@ND}\dimen0.5\wd0 \CD@tI
\ht0 \CD@sI\dp0 \CD@ZD}\def\CD@SD{\setbox0=\null\ht0=\CD@tI\dp0=\CD@sI\wd0=%
\dimen0 \copy0\penalty\CD@GB\box0 }\def\CD@TI{\CD@GC\CD@yB}\def\CD@dI{\CD@GC
\CD@vB}\def\CD@SI{\CD@GC\CD@yB\CD@vB}\def\CD@GC{\setbox\CD@lI=\hbox{\unhbox
\CD@lI}\setbox\CD@lF=\hbox{\unhbox\CD@lF\global\setbox\CD@DA=\lastbox}\ht
\CD@MH\ht\CD@DA\dp\CD@MH\dp\CD@DA\advance\CD@mF\wd\CD@DA\advance\CD@mI\wd
\CD@lI}\CD@tG\ifPositiveGradient\CD@CI\CD@BI\CD@CI\CD@tG\ifClimbing\CD@rB
\def\CD@qF{\CD@KH\ifPositiveGradient/\else\CD@k\backslash\CD@ND\fi}%
\def\CD@qF{\CD@rF\char\count@}\fi\let\CD@rF\tenln\def\Use@line@char#1{%
\hbox{#1\CD@rF\ifPositiveGradient\else\advance\count@64 \fi\char\count@}}\def
\def\CD@ZF{\Use@line@char{\ifcase\DiagonalChoice\CD@gF\or
\CD@fF\or\CD@fF\else\CD@gF\fi}}\def\CD@gF{\ifnum\CD@TC=\z@\count@'33 \else
\count@\CD@TC\multiply\count@\sixt@@n\advance\count@-9\advance\count@\CD@LH
\advance\count@\CD@LH\fi}\def\CD@fF{\count@'\ifcase\CD@LH55\or\ifcase\CD@TC66%
\or22\or52\or61\or72\fi\or\ifcase\CD@TC66\or25\or22\or63\or52\fi\or\ifcase
\CD@TC66\or16\or36\or22\or76\fi\or\ifcase\CD@TC66\or27\or25\or67\or22\fi\fi
\relax}\def\CD@uC#1{\hbox{#1\setbox0=\Use@line@char{#1}\ifPositiveGradient
\else\raise.3\ht0\fi\copy0 \kern-.7\wd0 \ifPositiveGradient\raise.3\ht0\fi
\box0}}\def\CD@jF#1{\hbox{\setbox0=#1\kern-.75\wd0 \vbox to.25\ht0{%
\ifPositiveGradient\else\vss\fi\box0 \ifPositiveGradient\vss\fi}}}\def\CD@jI#%
\def\CD@tC#1#2{\vbox to#1{\vss\hbox to#%
2{\hss.\hss}\vss}}\def\hfdot{\CD@tC{2\axisheight}{.5em}}%
\def\vfdot{\CD@tC{1ex}\z@}
\def\CD@bF{\hbox{\dimen0=.3\CD@zC\dimen1\dimen0 \ifnum\CD@LH>\CD@TC\CD@iC{%
\dimen1}\else\CD@dG{\dimen0}\fi\CD@tC{\dimen0}{\dimen1}}}\newarrowfiller{.}%
\def\dfdot{\CD@bF\CD@CK}\CD@RC{+f:.}{\dfdot}\CD@RC{-f%
\def\CD@@K#1{\hbox\bgroup\def\CD@CH{#1\egroup}\afterassignment
\CD@CH
\count@='}\def\lnchar{\CD@@K\CD@qF}\def\CD@dF#1{\setbox#1=\hbox{\dimen5\dimen
#1 \setbox8=\box#1 \dimen1\wd8 \count@\dimen5 \divide\count@\dimen1 \ifnum
\count@=0 \box8 \ifdim\dimen5<.95\dimen1 \CD@gB{diagonal line too short}\fi
\else\dimen3=\dimen5 \advance\dimen3-\dimen1 \divide\dimen3\count@\dimen4%
\dimen3 \CD@dG{\dimen4}\ifPositiveGradient\multiply\dimen4\m@ne\fi\dimen6%
\dimen1 \advance\dimen6-\dimen3 \loop\raise\count@\dimen4\copy8 \ifnum\count@
>0 \kern-\dimen6 \advance\count@\m@ne\repeat\fi}}\def\CD@CG#1{\CD@EF\CD@xJ{#1%
}\else\CD@dF{#1}\fi}\def\CD@IH#1{}\newdimen\objectheight\objectheight1.8ex
\newdimen\objectwidth\objectwidth1em \def\CD@YD{\dimen6=\CD@aK
\DiagramCellHeight\dimen7=\CD@WK\DiagramCellWidth\CD@KJ\ifnum\CD@LH>0 \ifnum
\CD@TC>0 \CD@aF\else\aftergroup\CD@VC\fi\else\aftergroup\CD@UC\fi}\def\CD@VC{%
\CD@YA{diagonal map is nearly vertical}\CD@NA}\def\CD@UC{\CD@YA{diagonal map
is nearly horizontal}\CD@NA}\CD@rG\CD@NA{Use an orthogonal map instead}\def
\axisheight\CD@iC{\dimen8%
}\CD@X{\dimen8}{.5\wd3}\dimen9\dp3\advance\dimen9\axisheight\CD@iC{\dimen9}%
\else\CD@CG{2}\CD@CG{4}\ifPositiveGradient\dimen2-\dimen0%
\fi\rlap{\unhbox1}\fi\raise
\def\NorthWest{\CD@BI
\CD@rB\DiagonalChoice0 }\def\NorthEast{\CD@CI\CD@rB\DiagonalChoice1 }\def
\def\SouthEast{\CD@BI\CD@qB
\DiagonalChoice2 }\def\CD@aD{\vadjust{\CD@uA\CD@FA\advance\CD@uA
\ifPositiveGradient\else-\fi\CD@ZK\relax\CD@vA\CD@NB\advance\CD@vA-\CD@bK
\relax\hbox{\advance\CD@uA\ifPositiveGradient-\fi\CD@WK\advance\CD@vA\CD@aK
\hbox{\box6 \kern\CD@DC\kern\CD@eJ\penalty1 \box7 \box\z@}\penalty\CD@uA
\penalty\CD@vA}\penalty\CD@uA\penalty\CD@vA\penalty104}}\def\CD@eH#1{\relax
\vadjust{\hbox@maths{#1}\penalty\CD@FA\penalty\CD@NB\penalty\tw@}}\def\CD@lB{%
\ifcase\CD@GB\or\or\CD@bH{.5\wd0}{\box0}{.5\wd0}\z@\or\unhbox\z@\setbox\z@
\lastbox\CD@bH{.5\wd0}{\box0}{.5\wd0}\z@\unpenalty\unpenalty\setbox\z@
\lastbox\or\CD@TG\else\advance\CD@GB-100 \ifnum\CD@GB<\z@\cd@shouldnt B\fi
\setbox\z@\hbox{\kern\CD@mF\copy\CD@MH\kern\CD@mI\CD@uA\CD@XB\advance\CD@uA-%
\CD@NB\penalty\CD@uA\CD@uA\CD@FA\advance\CD@uA-\CD@lA\penalty\CD@uA\unhbox\z@
\global\CD@yA\lastpenalty\unpenalty\global\CD@zA\lastpenalty\unpenalty}\CD@uA
-\CD@yA\CD@vA\CD@zA\CD@fI\fi}\def\CD@TG{\unhbox\z@\setbox\z@\lastbox\CD@uA
\lastpenalty\unpenalty\advance\CD@uA\CD@mA\CD@vA\CD@XB\advance\CD@vA-%
\lastpenalty\unpenalty\dimen1\lastkern\unkern\setbox3\lastbox\dimen0\lastkern
\unkern\setbox0=\hbox to\z@{\unhbox0\setbox0\lastbox\setbox7\lastbox
\unpenalty\CD@eJ\lastkern\unkern\CD@DC\lastkern\unkern\setbox6\lastbox\dimen7%
\CD@tB\advance\dimen7-\wd\CD@uA\ifdim\dimen7<\z@\CD@CI\multiply\dimen7\m@ne
\let\mv\empty\else\CD@BI\def\mv{\raise\ht1}\kern-\dimen7 \fi\ifnum\CD@vA>%
\CD@NB\dimen6\CD@uB\advance\dimen6-\ht\CD@vA\else\dimen6\z@\fi\CD@jJ\CD@mK
\setbox1\null\ht1\dimen6\wd1\dimen7 \dimen7\dimen2 \dimen6\wd1 \CD@KJ\CD@uA
\CD@LH\CD@vA\CD@TC\dimen6\ht1 \CD@KJ\setbox2\null\divide\dimen2\tw@\advance
\dimen2\CD@eJ\CD@eG{\dimen2}\wd2\dimen2 \dimen0.5\dimen7 \advance\dimen0%
\ifPositiveGradient\else-\fi\CD@eJ\CD@dG{\dimen0}\advance\dimen0-\axisheight
\ht2\dimen0 \dimen0\CD@DC\CD@eG{\dimen0}\advance\dimen0\ht2\ht2\dimen0 \dimen
0\ifPositiveGradient-\fi\CD@DC\CD@dG{\dimen0}\advance\dimen0\wd2\wd2\dimen0
\setbox4\null\dimen0 .6\CD@zC\CD@eG{\dimen0}\ht4\dimen0 \dimen0 .2\CD@zC
\CD@dG{\dimen0}\wd4\dimen0 \dimen0\wd2 \ifvoid6\else\dimen1\ht4 \advance
\dimen1\ht2 \CD@CC6+-\raise\dimen1\rlap{\ifPositiveGradient\advance\dimen0-%
\wd6\advance\dimen0-\wd4 \else\advance\dimen0\wd4 \fi\kern\dimen0\box6}\fi
\dimen0\wd2 \ifvoid7\else\dimen1\ht4 \advance\dimen1-\ht2 \CD@CC7-+\lower
\dimen1\rlap{\ifPositiveGradient\advance\dimen0\wd4 \else\advance\dimen0-\wd7%
\advance\dimen0-\wd4 \fi\kern\dimen0\box7}\fi\mv\box0\hss}\ht0\z@\dp0\z@
\CD@bH{\z@}{\box\z@}{\z@}{\axisheight}}\def\CD@CC#1#2#3{\dimen4.5\wd#1 \ifdim
\dimen4>.25\dimen7\dimen4=.25\dimen7\fi\ifdim\dimen4>\CD@zC\dimen4.4\dimen4
\advance\dimen4.6\CD@zC\fi\CD@eG{\dimen4}\dimen5\axisheight\CD@dG{\dimen5}%
\advance\dimen4-\dimen5 \dimen5\dimen4\CD@eG{\dimen5}\advance\dimen0%
\ifPositiveGradient#2\else#3\fi\dimen5 \CD@dG{\dimen4}\advance\dimen1\dimen4 }
\def\CD@eD#1{\expandafter\CD@IK{#1}}\CD@ZA\CD@EK{output is PostScript
dependent}\def\CD@SC{\CD@IK{/bturn {gsave currentpoint currentpoint translate
4 2 roll neg exch atan rotate neg exch neg exch translate } def /eturn {%
currentpoint grestore moveto} def}}\def\CD@gK{\relax\CD@hK\CD@tK{Q}\else
\CD@IK{eturn}\fi} \def\CD@OJ#1{\count@#1\relax\multiply\count@7\advance
\count@16577\divide\count@33154 }\def\CD@fD#1{\expandafter\special{#1}} \def
\CD@LF\CD@fD{pn \the\count@}\CD@fD{pa 0 0}\CD@OJ{\dimen#%
\def\CD@JI{%
\CD@KJ\begingroup\ifdim\dimen7<\dimen6 \dimen2=\dimen6 \dimen6=\dimen7 \dimen
7=\dimen2 \count@\CD@LH\CD@LH\CD@TC\CD@TC\count@\else\dimen2=\dimen7 \fi
\ifdim\dimen6>.01\p@\CD@KI\global\CD@QA\dimen0 \else\global\CD@QA\dimen7 \fi
\endgroup\dimen2\CD@QA\CD@iK\CD@lK{\ifPositiveGradient\else-\fi\dimen6}\CD@iK
\CD@kK{\ifPositiveGradient-\fi\dimen6}\CD@iK\CD@eK{\dimen7}}\def\CD@KI{\CD@hJ
\ifdim\dimen7>1.73\dimen6 \divide\dimen2 4 \multiply\CD@TC2 \else\dimen2=0.%
353553\dimen2 \advance\CD@LH-\CD@TC\multiply\CD@TC4 \fi\dimen0=4\dimen2 \CD@ZG
4\CD@ZG{-2}\CD@ZG2\CD@ZG{-2.5}}\def\CD@AI{\begingroup\count@\dimen0 \dimen2 45%
pt \divide\count@\dimen2 \ifdim\dimen0<\z@\advance\count@\m@ne\fi\ifodd
\count@\advance\count@1\CD@@A\else\CD@y\fi\advance\dimen0-\count@\dimen2
\CD@gE\multiply\dimen0\m@ne\fi\ifnum\count@<0 \multiply\count@-7 \fi\dimen3%
\dimen1 \dimen6\dimen0 \dimen7 3754936sp \ifdim\dimen0<6\p@\def\CD@OG{4000}%
\fi\CD@KJ\dimen2\dimen3\CD@dG{\dimen2}\CD@hJ\multiply\CD@TC-6 \dimen0\dimen2
\CD@ZG1\CD@ZG{0.3}\dimen1\dimen0 \dimen2\dimen3 \dimen0\dimen3 \CD@ZG3\CD@ZG{%
1.5}\CD@ZG{0.3}\divide\count@2 \CD@gE\multiply\dimen1\m@ne\fi\ifodd\count@
\dimen2\dimen1\dimen1\dimen0\dimen0-\dimen2 \fi\divide\count@2 \ifodd\count@
\multiply\dimen0\m@ne\multiply\dimen1\m@ne\fi\global\CD@QA\dimen0\global
\CD@RA\dimen1\endgroup\dimen6\CD@QA\dimen7\CD@RA}\def\CD@OC{255}\let\CD@OG
\def\CD@KJ{\begingroup\ifdim\dimen7<\dimen6 \dimen9\dimen7\dimen7\dimen
6\dimen6\dimen9\CD@@A\else\CD@y\fi\dimen2\z@\dimen3\CD@XH\dimen4\CD@XH\dimen0%
\z@\dimen8=\CD@OG\CD@XH\CD@lC\global\CD@yA\dimen\CD@gE0\else3\fi\global\CD@zA
\dimen\CD@gE3\else0\fi\endgroup\CD@LH\CD@yA\CD@TC\CD@zA}\def\CD@lC{\count@
\dimen6 \divide\count@\dimen7 \advance\dimen6-\count@\dimen7 \dimen9\dimen4
\advance\dimen9\count@\dimen0 \ifdim\dimen9>\dimen8 \CD@@C\else\CD@AC\ifdim
\dimen6>\z@\dimen9\dimen6 \dimen6\dimen7 \dimen7\dimen9 \expandafter
\expandafter\expandafter\CD@lC\fi\fi}\def\CD@@C{\ifdim\dimen0=\z@\ifdim\dimen
9<2\dimen8 \dimen0\dimen8 \fi\else\advance\dimen8-\dimen4 \divide\dimen8%
\dimen0 \ifdim\count@\CD@XH<2\dimen8 \count@\dimen8 \dimen9\dimen4 \advance
\dimen9\count@\dimen0 \CD@AC\fi\fi}\def\CD@AC{\dimen4\dimen0 \dimen0\dimen9
\advance\dimen2\count@\dimen3 \dimen9\dimen2 \dimen2\dimen3 \dimen3\dimen9 }%
\def\CD@ZG#1{\CD@dG{\dimen2}\advance\dimen0 #1\dimen2 }\def\CD@dG#1{\divide#1%
\CD@TC\multiply#1\CD@LH}\def\CD@eG#1{\divide#1\CD@vA\multiply#1\CD@uA}\def
\def\CD@hJ{\dimen6\CD@LH\CD@XH
\multiply\dimen6\CD@LH\dimen7\CD@TC\CD@XH\multiply\dimen7\CD@TC\CD@KJ}\def
\let\CD@GH
\def\CD@GH{\errorcontextlines\m@ne}\fi\ifnum\inputlineno<0 \let
\let\CD@W\empty\let\CD@mD\relax\let\CD@uI\relax\let\CD@vI\relax
\let\CD@zF\relax\message{! Why not upgrade to TeX version 3? (available since
1990)}\else\def\CD@W{ at line \number\inputlineno}\def\CD@mD{ - first occurred%
}\def\CD@uI{\edef\CD@h{\the\inputlineno}\global\let\CD@jB\CD@h}\def\CD@h{9999%
}\def\CD@vI{\xdef\CD@jB{\the\inputlineno}}\def\CD@jB{\CD@h}\def\CD@zF{\ifnum
\CD@h<\inputlineno\edef\CD@CD{\space at lines \CD@h--\the\inputlineno}\else
\edef\CD@CD{\CD@W}\fi}\fi\let\CD@CD\empty\def\CD@YA#1#2{\CD@GH\errhelp=#2%
\expandafter\errmessage{\CD@tA: #1}}\def\CD@KB#1{\begingroup\expandafter
\message{! \CD@tA: #1\CD@CD}\ifnum\CD@XB>\CD@NB\ifnum\CD@CA>\CD@NB\else\ifnum
\CD@lA>\CD@FA\else\ifnum\CD@LB>\CD@FA\advance\CD@XB-\CD@NB\advance\CD@FA-%
\CD@lA\advance\CD@FA1\relax\expandafter\message{! (error detected at row \the
\CD@XB, column \the\CD@FA, but probably caused elsewhere)}\fi\fi\fi\fi
\endgroup}\def\CD@gB#1{{\expandafter\message{\CD@tA\space Warning: #1\CD@W}}}%
\def\CD@CB#1#2{\CD@gB{#1 \string#2 is obsolete\CD@mD}}\def\CD@AB#1{\CD@CB{%
Dimension}{#1}\CD@DE#1\CD@BB\CD@BB}\def\CD@BB{\CD@OA=}\def\CD@@B#1{\CD@CB{%
Count}{#1}\CD@DE#1\CD@OH\CD@OH}\def\CD@OH{\count@=}\def\HorizontalMapLength{%
\CD@AB\HorizontalMapLength}\def\VerticalMapHeight{\CD@AB\VerticalMapHeight}%
\def\VerticalMapDepth{\CD@AB\VerticalMapDepth}\def\VerticalMapExtraHeight{%
\CD@AB\VerticalMapExtraHeight}\def\VerticalMapExtraDepth{\CD@AB
\VerticalMapExtraDepth}\def\DiagonalLineSegments{\CD@@B\DiagonalLineSegments}%
\CD@ZA\CD@KH{\CD@eF\space diagonal line and arrow font not
available}\else\let\CD@KH\relax\fi\def\CD@aG#1#2<#3:#4:#5#6{\begingroup\CD@PA
#3\relax\advance\CD@PA-#2\relax\ifdim.1em<\CD@PA\CD@uA#5\relax\CD@vA#6\relax
\ifnum\CD@uA<\CD@vA\count@\CD@vA\advance\count@-\CD@uA\CD@KB{#4 by \the\CD@PA
}\if#1v\let\CD@CH\CD@JK\edef\tmp{\the\CD@uA--\the\CD@vA,\the\CD@FA}\else
\advance\count@\count@\if#1l\advance\count@-\CD@A\else\if#1r\advance\count@
\CD@A\fi\fi\advance\CD@PA\CD@PA\let\CD@CH\CD@ZE\edef\tmp{\the\CD@NB,\the
\CD@uA--\the\CD@vA}\fi\divide\CD@PA\count@\ifdim\CD@CH<\CD@PA\global\CD@CH
\CD@PA\fi\fi\fi\endgroup}\CD@tG\CD@xE\CD@JD\CD@ID\CD@rG\CD@xI{See the message
above.}\CD@rG\CD@lH{Perhaps you've forgotten to end the diagram before
resuming the text, in\CD@uG which case some garbage may be added to the
diagram, but we should be ok now.\CD@uG Alternatively you've left a blank line
in the middle - TeX will now complain\CD@uG that the remaining \CD@S s are
misplaced - so please use comments for layout.}\CD@rG\CD@hD{You have already
closed too many brace pairs or environments; an \CD@HD\CD@uG command was (%
over)due.}\CD@rG\CD@hH{\CD@dC\space and \CD@HD\space commands must match.}%
\def\CD@jH{\ifnum\inputlineno=0 \else\expandafter\CD@iH\fi}\def\CD@iH{\CD@MD
\CD@GD\crcr\CD@YA{missing \CD@HD\space inserted before \CD@kH- type "h"}%
\CD@lH\enddiagram\CD@AG\CD@kH\par}\def\CD@AG#1{\edef\enddiagram{\noexpand
\CD@rD{#1\CD@W}}}\def\CD@rD#1{\CD@YA{\CD@HD\space(anticipated by #1) ignored}%
\CD@xI\let\enddiagram\CD@SG}\def\CD@SG{\CD@YA{misplaced \CD@HD\space ignored}%
\CD@hH}\def\CD@mC{\CD@YA{missing \CD@HD\space inserted.}\CD@hD\CD@AG{closing
group}}\ifx\DeclareOption\CD@qK\else\ifx\DeclareOption\@notprerr\else
\def\vboxtoz{\vbox to\z@}
\def\scriptaxis#1{\@scriptaxis{$\scriptstyle#1$}}
\def\ssaxis#1{\@ssaxis{$\scriptscriptstyle#1$}}
\def\@scriptaxis#1{\dimen0\axisheight\advance\dimen0-\ss@axisheight\raise
\dimen0\hbox{#1}}\def\@ssaxis#1{\dimen0\axisheight\advance\dimen0-%
\ss@axisheight\raise\dimen0\hbox{#1}}
\let\boldscriptaxis\scriptaxis
\def\boldscript#1{\hbox{$\scriptstyle#1$}}
\def\boldscriptaxis#1{\@scriptaxis{\boldmath$\scriptstyle#1$}}
\def\boldscript#1{\hbox{\boldmath$\scriptstyle#1$}}
\def\raisehook#1#2#3{\hbox{\setbox3=\hbox{#1$\scriptscriptstyle#3$}%
\dimen0\ss@axisheight
\dimen1\axisheight\advance\dimen1-\dimen0
\dimen2\ht3\advance\dimen2-\dimen0%
\advance\dimen2-0.021em\advance\dimen1 #2\dimen2%
\raise\dimen1\box3}}
\def\shifthook#1#2#3{\setbox1=\hbox{#1$\scriptscriptstyle#3$}\dimen0\wd1%
\divide\dimen0 12\CD@zH{\dimen0}
\dimen1\wd1\advance\dimen1-2\dimen0 \advance\dimen1-2\CD@oI\CD@zH{\dimen1}%
\kern#2\dimen1\box1}
\def\@cmex{\mathchar"03}
\def\make@pbk#1{\setbox\tw@\hbox to\z@{#1}\ht\tw@\z@\dp\tw@\z@\box\tw@}\def
\def\CD@qH{\kern0.11em}\def\CD@pH{\kern0%
.35em}
\def\dblvert{\def\CD@rH{\kern.5\PileSpacing}}\def\CD@rH{}
\def\SEpbk{\make@pbk{\CD@qH\CD@rH\vrule depth 2.87ex height -2.75ex width 0.%
95em \vrule height -0.66ex depth 2.87ex width 0.05em \hss}}
\def\SWpbk{\make@pbk{\hss\vrule height -0.66ex depth 2.87ex width 0.05em
\vrule depth 2.87ex height -2.75ex width 0.95em \CD@qH\CD@rH}}
\def\NEpbk{\make@pbk{\CD@qH\CD@rH\vrule depth -3.81ex height 4.00ex width 0.%
95em \vrule height 4.00ex depth -1.72ex width 0.05em \hss}}
\def\NWpbk{\make@pbk{\hss\vrule height 4.00ex depth -1.72ex width 0.05em
\vrule depth -3.81ex height 4.00ex width 0.95em \CD@qH\CD@rH}}
\def\puncture{{\setbox0\hbox{A}\vrule height.53\ht0 depth-.47\ht0 width.35\ht
0 \kern.12\ht0 \vrule height\ht0 depth-.65\ht0 width.06\ht0 \kern-.06\ht0
\vrule height.35\ht0 depth0pt width.06\ht0 \kern.12\ht0 \vrule height.53\ht0
depth-.47\ht0 width.35\ht0 }}
\def\NEclck{\overprint{\raise2.5ex\rlap{ \CD@rH$\scriptstyle\searrow$}}}
\def\NEanti{\overprint{\raise2.5ex\rlap{ \CD@rH$\scriptstyle\nwarrow$}}}
\def\NWclck{\overprint{\raise2.5ex\llap{$\scriptstyle\nearrow$ \CD@rH}}}
\def\NWanti{\overprint{\raise2.5ex\llap{$\scriptstyle\swarrow$ \CD@rH}}}
\def\SEclck{\overprint{\lower1ex\rlap{ \CD@rH$\scriptstyle\swarrow$}}}
\def\SEanti{\overprint{\lower1ex\rlap{ \CD@rH$\scriptstyle\nearrow$}}}
\def\SWclck{\overprint{\lower1ex\llap{$\scriptstyle\nwarrow$ \CD@rH}}}
\def\SWanti{\overprint{\lower1ex\llap{$\scriptstyle\searrow$ \CD@rH}}}
\def\rhvee{\mkern-10mu\greaterthan}
\def\lhvee{\lessthan\mkern-10mu}
\def\dhvee{\vboxtoz{\vss\hbox{$\vee$}\kern0pt}}
\def\uhvee{\vboxtoz{\hbox{$\wedge$}\vss}}
\def\dhlvee{\vboxtoz{\vss\hbox{$\scriptstyle\vee$}\kern0pt}}
\def\uhlvee{\vboxtoz{\hbox{$\scriptstyle\wedge$}\vss}}
\def\dhblvee{\vboxtoz{\vss\boldscript\vee\kern0pt}}
\def\uhblvee{\vboxtoz{\boldscript\wedge\vss}}
\def\rhcvee{\mkern-10mu\succ}
\def\lhcvee{\prec\mkern-10mu}
\def\dhcvee{\vboxtoz{\vss\hbox{$\curlyvee$}\kern0pt}}
\def\uhcvee{\vboxtoz{\hbox{$\curlywedge$}\vss}}
\def\rhvvee{\mkern-13mu\gg}
\def\lhvvee{\ll\mkern-13mu}
\def\dhvvee{\vboxtoz{\vss\hbox{$\vee$}\kern-.6ex\hbox{$\vee$}\kern0pt}}
\def\uhvvee{\vboxtoz{\hbox{$\wedge$}\kern-.6ex \hbox{$\wedge$}\vss}}
\def\rhtriangle{\triangleright\mkern1.2mu}
\def\lhtriangle{\triangleleft\mkern.8mu}
\def\uhtriangle{\vbox{\kern-.2ex \hbox{$\scriptscriptstyle\bigtriangleup$}%
\kern-.25ex}}
\def\dhtriangle{\vbox{\kern-.28ex \hbox{$\scriptscriptstyle\bigtriangledown$}%
\kern-.1ex}}
\def\dhblack{\vbox{\kern-.25ex\nointerlineskip\hbox{$\blacktriangledown$}}}%
\def\uhblack{\vbox{\kern-.25ex\nointerlineskip\hbox{$\blacktriangle$}}}%
\def\dhlblack{\vbox{\kern-.25ex\nointerlineskip\hbox{$\scriptstyle
\blacktriangledown$}}}
\def\uhlblack{\vbox{\kern-.25ex\nointerlineskip\hbox{$\scriptstyle
\blacktriangle$}}}
\uhblack\newarrowhead{littleblack}{\mkern-1mu%
\scriptaxis\blacktriangleright}{\scriptaxis\blacktriangleleft\mkern-2mu}%
\def\rhla{\hbox{\setbox0=\lnchar55\dimen0=\wd0\kern-.6\dimen0\ht0\z@\raise
\axisheight\box0\kern.1\dimen0}}
\def\lhla{\hbox{\setbox0=\lnchar33\dimen0=\wd0\kern.05\dimen0\ht0\z@\raise
\axisheight\box0\kern-.5\dimen0}}
\def\dhla{\vboxtoz{\vss\rlap{\lnchar77}}}
\def\uhla{\vboxtoz{\setbox0=\lnchar66 \wd0\z@\kern-.15\ht0\box0\vss}}
\def\lhlala{\lhla\kern.3em\lhla}
\def\rhlala{\rhla\kern.3em\rhla}
\def\uhlala{\hbox{\uhla\raise-.6ex\uhla}}
\def\dhlala{\hbox{\dhla\lower-.6ex\dhla}}
\def\hhO{\scriptaxis\bigcirc\mkern.4mu} \def\hho{{\circ}\mkern1.2mu}%
\hhO\hhO{\scriptstyle\bigcirc}{\scriptstyle\bigcirc}
\def\rhtimes{\mkern-5mu{\times}\mkern-.8mu}\def\lhtimes{\mkern-.8mu{\times}%
\mkern-5mu}\def\uhtimes{\setbox0=\hbox{$\times$}\ht0\axisheight\dp0-\ht0%
\lower\ht0\box0 }\def\dhtimes{\setbox0=\hbox{$\times$}\ht0\axisheight\box0 }%
\Rightarrow\Leftarrow{\@cmex7F}{\@cmex7E}
\def\twoheaddownarrow{\rlap{$\downarrow$}\raise-.5ex\hbox{$\downarrow$}}
\def\twoheaduparrow{\rlap{$\uparrow$}\raise.5ex\hbox{$\uparrow$}}
\def\ltvee{\mkern-1mu{\lessthan}\mkern.4mu}
\else\newarrowtail{%
boldlittlevee}{\boldscriptaxis\greaterthan}{\mkern-1mu\boldscriptaxis
\lessthan}{\boldscript\vee}{\boldscript\wedge}\fi
\def\rttriangle{\mkern1.2mu\triangleright}
\uhblack\newarrowtail{littleblack}{\scriptaxis
\blacktriangleright\mkern-2mu}{\mkern-1mu\scriptaxis\blacktriangleleft}%
\def\rtla{\hbox{\setbox0=\lnchar55\dimen0=\wd0\kern-.5\dimen0\ht0\z@\raise
\axisheight\box0\kern-.2\dimen0}}
\def\ltla{\hbox{\setbox0=\lnchar33\dimen0=\wd0\kern-.15\dimen0\ht0\z@\raise
\axisheight\box0\kern-.5\dimen0}}
\def\dtla{\vbox{\setbox0=\rlap{\lnchar77}\dimen0=\ht0\kern-.7\dimen0\box0%
\kern-.1\dimen0}}
\def\utla{\vbox{\setbox0=\rlap{\lnchar66}\dimen0=\ht0\kern-.1\dimen0\box0%
\kern-.6\dimen0}}
\def\rtvvee{\gg\mkern-3mu}
\def\ltvvee{\mkern-3mu\ll}
\def\dtvvee{\vbox{\hbox{$\vee$}\kern-.6ex \hbox{$\vee$}\vss}}
\def\utvvee{\vbox{\vss\hbox{$\wedge$}\kern-.6ex \hbox{$\wedge$}\kern\z@}}
\def\ltlala{\ltla\kern.3em\ltla}
\def\rtlala{\rtla\kern.3em\rtla}
\def\utlala{\hbox{\utla\raise-.6ex\utla}}
\def\dtlala{\hbox{\dtla\lower-.6ex\dtla}}
\def\utbar{\vrule height 0.093ex depth0pt width 0.4em}
\let\dtbar\utbar
\def\rtbar{\mkern1.5mu\vrule height 1.1ex depth.06ex width .04em\mkern1.5mu}%
\let\ltbar\rtbar
\def\rthooka{\raisehook{}+\subset\mkern-1mu}
\def\lthooka{\mkern-1mu\raisehook{}+\supset}
\def\rthookb{\raisehook{}-\subset\mkern-2mu}
\def\lthookb{\mkern-1mu\raisehook{}-\supset}
\def\dthooka{\shifthook{}+\cap}
\def\dthookb{\shifthook{}-\cap}
\def\uthooka{\shifthook{}+\cup}
\def\uthookb{\shifthook{}-\cup}
\uthooka\newarrowtail{hookb}%
\CD@qK\newarrowtail{boldhooka}\rthooka\lthooka\dthooka\uthooka
\uthookb\newarrowtail{%
boldhook}\rthooka\lthooka\dthookb\uthooka\else\def\rtbhooka{\raisehook
\boldmath+\subset\mkern-1mu}
\def\ltbhooka{\mkern-1mu\raisehook\boldmath+\supset}
\def\rtbhookb{\raisehook\boldmath-\subset\mkern-2mu}
\def\ltbhookb{\mkern-1mu\raisehook\boldmath-\supset}
\def\dtbhooka{\shifthook\boldmath+\cap}
\def\dtbhookb{\shifthook\boldmath-\cap}
\def\utbhooka{\shifthook\boldmath+\cup}
\def\utbhookb{\shifthook\boldmath-\cup}
\utbhooka\newarrowtail{%
boldhookb}\rtbhookb\ltbhookb\dtbhookb\utbhookb\newarrowtail{boldhook}%
\def\dtsqhooka{\shifthook{}+\sqcap}
\def\ltsqhooka{\mkern-1mu\raisehook{}+\sqsupset}
\def\rtsqhooka{\raisehook{}+\sqsubset\mkern-1mu}
\def\utsqhooka{\shifthook{}+\sqcup}
\uthooka\newarrowtail{C}\rthooka
\hhO\hhO{\scriptstyle\bigcirc}{\scriptstyle\bigcirc}
\Leftarrow\Rightarrow{\@cmex7E}{\@cmex7F}
\def\vfthree{\mid\!\!\!\mid\!\!\!\mid}
\def\vfdashstrut{\vrule width0pt height1.3ex depth0.7ex}
\def\vfthedash{\vrule width\CD@LF height0.6ex depth 0pt}
\def\hfthedash{\CD@AJ\vrule\horizhtdp width 0.26em}
\def\hfdash{\mkern5.5mu\hfthedash\mkern5.5mu}
\def\vfdash{\vfdashstrut\vfthedash}
\def\rightBrace{\d@brace[thick,cmex]}
\def\leftBrace{\u@brace[thick,cmex]}
\def\upperBrace{\r@brace[thick,cmex]}
\def\lowerBrace{\l@brace[thick,cmex]}
\def\rightParenth{\d@parenth[thick,cmex]}
\def\leftParenth{\u@parenth[thick,cmex]}
\def\upperParenth{\r@parenth[thick,cmex]}
\def\lowerParenth{\l@parenth[thick,cmex]}
\let\hEq\rEq
\let\vEq\uEq
\def\labelstyle{
\ifincommdiag
\textstyle
\else
\scriptstyle
\fi}
\let\objectstyle\displaystyle
\CD@hK\message{| running in pdf mode -- diagonal arrows will work
automatically |}\else\message{| >>>>>>>> POSTSCRIPT MODE (DVIPS) IS NOW THE
DEFAULT <<<<<<<<<<<<|}\message{|(DVI mode has not been supported since 1992
\else\message{| >>>>>>>> USING UGLY
OBSOLETE DVI CODE - PLEASE STOP <<<<<<<<<<<<|}\message{|(DVI mode has not been
\begin{document}
\address{Mathematics Research Unit, Luxembourg University, Grand Duchy of Luxembourg and Independent University of Moscow,
B. Vlasievsky per., 11,
Moscow 121002,
Russia}
\email{alexey.kalugin@uni.lu}
\author{Alexey Kalugin}
\title{A note on a quantization via the Ran space.}
\dedicatory{}
\begin{abstract}
In a present note we give a new proof of Etingof-Kazhdan quantization theorem.
\end{abstract}
\maketitle

\section{Introduction}

\subsection{Main result} A quantization of Lie bialgebras is a functorial way to associate with a conilpotent\footnote{For a bialgebra $A$ with a reduced comultiplication $\overline{\Delta}$ (resp. a Lie bialgebra $\mathfrak g$ with a cobracket $\delta$) a canonical filtration $A_k:=\{x\in A^+\,|\, \overline{\Delta}^k(x)=0\}$ (resp. $\mathfrak g_k:=\{x\in \mathfrak g\,|\, \delta(x)=0\}$ )is exhaustive.} Lie bialgebra $\mathfrak g$ a certain conilpotent bialgebra $Q(\mathfrak g),$ called a \textit{quantization} of  $\mathfrak g,$ in a such way that a quantization induces an equivalence between the corresponding categories (an inverse functor is called \textit{dequantization}). After the original construction by Etingof and Kazhdan \cite{EK} many others proofs appeared latter (for example \cite{Tam}). In a preset paper we are going to give an another "motivic" construction of Lie bialgebras quantization.

\subsection{Technique} There are three main ingredients in our construction of a quantization (resp. a dequantization) of Lie bialgebras (resp. bialgebras):
\par\medskip

\textbf{The first one} is the equivalence between category of \textit{factorization algebras} on $Ran(\mathbb A^1)$  in the sense of Beilinson and Drinfeld and conilpotent Lie bialgebras, which can be though as a geometric instance of \textit{Quillen duality:}
$$\Lambda\colon LieB_c \overset{\sim}{\longrightarrow} {FA}_{un}(Ran(\mathbb A)),\qquad \mathfrak g\longmapsto \Lambda(\mathfrak g):=\{\Lambda(\mathfrak g))_I\}_{I\in Fin}.$$
Here $\Lambda(\mathfrak g)_I$ is a unipotent $\EuScript D$-module on $\mathbb A^I$ and $Fin$ is category of finite set with surjections as morphisms. 
This equivalence is achieved by using a \textit{quiver description} of a category of unipotent $\EuScript D$-modules on $\mathbb A^I$ following \cite{Kho}. For example when $|I|=2$ out of Lie bialgebra $\mathfrak g$ we can define the following double quiver:
$$
A^-\colon (\mathfrak g_1\otimes \mathfrak g_1\oplus \mathfrak g_1\otimes \mathfrak g_1)^{-\Sigma_2}\longleftrightarrow \mathfrak g_2\colon A^+,
$$
where $-\Sigma_2$ indicates skew invariants with respect to a symmetric group and operators are defined from a bracket and a cobracket:
$$
A^+:=\delta\oplus \delta,\qquad A^-:=\frac{1}{2}[-,-]\oplus [-,-].
$$
The quasi-inverse functor to $\Lambda$ is given by computing \textit{vanishing cycles} of $\EuScript D$-modules.
\par\medskip
\textbf{The second one} is the equivalence between category of \textit{factorizable sheaves} on $Ran(\mathbb A^1)$  (topological incarnations of factorization algebras) and conilpotent bialgebras, which can be though as a topological instance of \textit{Koszul duality}:
$$\Omega\colon AlgB_c \overset{\sim}{\longrightarrow} {FS}_{un}(Ran(\mathbb A)),\qquad A\longmapsto \Omega(A):=\{\Omega(A))_I\}_{I\in Fin}.$$
A perverse sheaf $\Omega(A)_I$ on $\mathbb A^I$ is defined using \textit{Cousin resolutions}. Note that our construction is essentially parallel to the recent result of Kapranov and Schechtman \cite{Shuffle}, where instead of conilpotent bialgebras they have considered graded ones. As it was explained in \textit{ibid} Cousin resolution approach is essentially equivalent to an elementary quiver description of categories of perverse sheaves on $Sym^n(\mathbb A^1)$\footnote{A quotient of $\mathbb A^n$ with respect to a natural symmetric group action} for small values of $n.$

For an example when $|I|=2$ using the classical quiver description of perverse sheaves on $Sym^{2}(\mathbb A)$ \cite{B2} we have a perverse sheaf $\EuScript E_2$ associated with a quiver:
$$
\overline{\Delta}\colon A_2\longleftrightarrow A_1\otimes A_1\colon m,
$$
where $m$ is a multiplication in bialgebra $A.$ Then we define $\Omega (A)_I:=p^!\EuScript E_2(A),$ where $p\colon \mathbb A^I \longrightarrow Sym^{|I|}(\mathbb A)$ is a natural projection.

\par\medskip
Inverse functor to $\Omega $ is defined by computing cohomology of perverse sheaves with support in real dimensional strata i.e. \textit{hyperbolic stalks} in the sense of \cite{KaSh}.

\par\medskip
\textbf{The third one} is given by the \textit{Riemann-Hilbert correspondence} which relates factorization algebras to factorizable sheaves:
$$
RH\colon {FA}_{un}(Ran(\mathbb A))\longrightarrow {FS}_{un}(Ran(\mathbb A)).
$$
Then we define a \textit{quantization functor} $Q$ and a \textit{dequantization functor} $DQ:$
$$
Q\colon LieB_c\longleftrightarrow AlgB_c\colon DQ,
$$
by the rule:
$$
 Q:=\Omega^{-1}\circ RH\circ\Lambda,\qquad DQ:= \Lambda^{-1}\circ RH^{-1}\circ \Omega.
$$

\subsection{Content}

In the first section, we recall some basic facts about conilpotent Lie bialgebras and collect all necessary information about unipotent $\EuScript D$-modules which we will need.
\par\medskip

In the second section, we recall some basic facts about conilpotent bialgebras and collect all necessary facts about perverse sheaves on symmetric powers that we need. In our exposition about perverse sheaves, we will mostly follow the work \cite{Shuffle}. We omit proofs and refer to \textit{ibid} for details.

\par\medskip
In the third subsection, we define a quantization and a dequantization functors and give some insights about how one can see associators in our setting.

\subsection{Acknowledgments} We wish to thank Boris Feigin, Sergei Merkulov and Nikita Markarian for fruitful discussions.
\par\medskip
Special thanks are due to Sasha Beilinson for his interest which has encouraged the author to write this note and Sergei Merkulov for his constant help and support during the preparation of this note.
\par\medskip
This work was supported by the FNR project number PRIDE$15/10949314/$GSM and partially supported by the Simons Foundation

\subsection{Notation} For a natural number $n$ by $[n]$ we denote a closed interval of natural numbers from $1$ to $n.$ Category of all finite sets and surjective maps between them will be denoted by $Fin.$ By $Vect$ we denote category of $\mathbb C$-vector spaces with the corresponding subcategory of finite-dimensional vector spaces $Vect^f.$ Let $I\in Fin$ be a finite set and $\{V\}_{i\in I}$ be a collection of vector spaces indexed by $I$ with the corresponding tensor product $\otimes_I V_i.$ For a permutation $\pi\in \Sigma_{|I|}$ we denote by $\sigma_{\pi}$ a standard twisting operator
$\sigma_{\pi}\colon \otimes_I V_i \longrightarrow \otimes_I V_i.$

\section{Quillen duality}

\subsection{Lie bialgebras}

Recall that a \textit{Lie bialgebra} is a vector space $\mathfrak g$ with skew symmetric operations $[-,-]\colon \mathfrak g\otimes \mathfrak g\longrightarrow \mathfrak g$ (bracket) and $\delta\colon \mathfrak g\longrightarrow \mathfrak g\otimes \mathfrak g$ (cobracket) which satisfies Jacobi and coJacobi identity. Moreover a bracket and a cobracket are compatible in the following way: $\delta$ is a $1$-cocycle of $\mathfrak g$ with values in $\mathfrak g\otimes \mathfrak g$ \cite{Drin}. We say that a Lie bialgebra $\mathfrak g$ is \textit{conilpotent} if a canonical filtration $\mathfrak g_k:=\{x\in \mathfrak g\,|\, \delta^k(x)=0\}$ is exhaustive and for every $k$ a component $\mathfrak g_k$ is finite dimensional.  An important property of this filtration is that a bracket and a cobracket respect this filtration: $\delta(\mathfrak g_k)\subset \summm_{i+j=k}\mathfrak g_i\otimes \mathfrak g_j,$  $[\mathfrak g_i,\mathfrak g_j]\subset \mathfrak g_{i+j-1}.$ For an element $x\in \mathfrak g$ we will use notation $\delta(x)_{{ij}}$ for a component of $\delta(x),$ which lies in the a $ \mathfrak g_i\otimes \mathfrak g_j.$ The category of conilpotent Lie bialgebras will be denoted by $LieB_c.$

\subsection{$\EuScript D$-modules}

 For every finite set $I\in Fin$ we consider the corresponding complex affine space $\mathbb A^I,$ equipped with a \textit{diagonal stratification} $\mathcal S_{\emptyset}=\{\Delta_{ij}\},$ where $\Delta_{ij}=z_i-z_j.$ The unique minimal closed stratum of $\mathcal S_{\emptyset}$ will be denoted by $\Delta$ and the unique maximal open stratum will be denoted by $U^{I}.$ We say that a stratum $S$ is adjusted to a stratum $S'$ if $S\subset \overline{S'}.$ Strata of dimension $k$ can be identified with a set of equivalence classes of surjections $[\pi],$ where we say that two surjections $\pi\colon I\twoheadrightarrow [k]$ and $\pi'\colon I\twoheadrightarrow [k]$ are equivalent if $\pi'=\pi\circ\tau,$ where $\tau$ is a bijection. Set of equivalence classes of surjections will be denoted by $Q(I)$ and has a structure of a poset where we say that $[\pi]\colon I\twoheadrightarrow T$ is less or equal to $[\pi']\colon I\twoheadrightarrow T'$ if $T$ is a quotient of $T'.$  Set of equivalence classes of surjections of a cardinality $k$ will be denoted by $Q(I,k).$ For an equivalence class $[\pi]$ the corresponding stratum will be denoted by $S_{[\pi]}.$ We will freely switch between these two notations. For $[\pi]\colon I\twoheadrightarrow T\in Q(I)$ and $i\neq j$ we associate another surjection $[\pi_{ij}]\colon I\twoheadrightarrow T_{ij},$ where $T_{ij}=T-\{ij\}\sqcup *,$ by the following rule let $\gamma_{ij}\colon T\twoheadrightarrow T_{ij}$ be a map sending $i$ and $j$ to $*$ and every $k$ not equal to $i$ or $j$ to $k,$ then we define $[\pi_{ij}]=\gamma_{ij}\circ [\pi].$ In this way we get all codimension one adjunctions.

By $\mathcal M_{{un}}^{dR}(\mathbb A^{I},\mathcal S_{\emptyset}),$ we denote a category of $\EuScript D$-modules on $\mathbb A^I$ which are generated by delta functions with a support on diagonals. This is a subcategory of the category of holonomic $\EuScript D$-modules smooth along stratification $\mathcal  S_{\emptyset}.$ Objects of the category  $\mathcal M_{{un}}^{dR}(\mathbb A^{I},\mathcal S_{\emptyset})$ are called
\textit{unipotent $\EuScript D$-modules.}

For every finite set $I$ with an affine space $\mathbb A^I$ we can associate a quiver $\Gamma_I$ whose vertices correspond to elements of $\mathcal S_{\emptyset}$ and arrows correspond to an adjusted strata of codimension one:
$$[\alpha]\rightarrow [\beta] \Leftrightarrow {S_{[\beta]}}\subset \overline{S}_{[\alpha]},\quad\mathrm{codim}_{\overline{S}_{[\alpha]}} {S_{[\beta]}}=1.$$

We will use a notation $[\alpha_1]\circ \dots \circ [\alpha_n]$ for a minimal path in a quiver which passes through vertices $[\alpha_i],$ such that it starts at $[\alpha_1]$ and ends at $[\alpha_n].$

Following \cite{Kho} with a quiver $\Gamma_I$ we can associate a category $Rep_{un}(\Gamma_I)$ of \textit{double representations of quiver} $\Gamma_I$ with certain relations. An object $V$ of this category is by the definition consists of a vector space $ V_{[\beta]},$ where $[\beta]$ is a vertex of $\Gamma_I$ and a pair of linear operators $A^-_{[\lambda][\beta]}\colon V_{[\beta]}\longleftrightarrow  V_{[\lambda]}\colon A^+_{[\beta][\lambda]}$ for every arrow $[\beta]\rightarrow[\lambda].$ These linear operators must satisfy relations which we omit (See \textit{ibid}). The main result of \textit{ibid} (Theorem $2.1$) gives an \textit{explicit equivalence} between category $Rep_{un}(\Gamma_I)$ and the category $\mathcal M_{{un}}^{dR}(\mathbb A^{I},\mathcal S_{\emptyset}).$ For a path $[\tau]=[\alpha_1]\circ \dots \circ [\alpha_n]$ we will use notation $A^+_{\tau}$ for a composition $A_{[\alpha_1],[\alpha_2]}^+\circ \dots \circ A_{[\alpha_{n-1}],[\alpha_n]}^+$ of operators.

\begin{remark}\label{df} Consider a \textit{specialization functor:}
$$Sp_{\Delta}\colon \mathcal M_{un}^{dR}(\mathbb A^I,\mathcal S_{\emptyset})\longrightarrow \mathcal M_{un}^{dR}(T_{\Delta}\mathbb A^I,\mathcal S_{\emptyset})$$
along minimal diagonal $\Delta\subset \mathbb A^I.$ Then we define an exact functor from a category unipotent $\EuScript D$-modules to a category finite dimensional vector spaces by the rule:
$$
\omega^{dR}_I\colon \mathcal M_{un}^{dR}(\mathbb A^I,\mathcal S_{\emptyset})\longrightarrow Vect^f,\quad \omega^{dR}:=H^0_{dR}(T_{\Delta}\mathbb A^I,-)
$$
Under the equivalence from \textit{ibid} we have the identical isomorphism between a functor $\omega^{dR}$ and a forgetful functor from $Rep_{un}(\Gamma_I)$ to vector spaces. Thus a quiver description of category $\mathcal M_{un}^{dR}(\mathbb A^I,\mathcal S_{\emptyset})$ can be considered as an instance of \textit{Tannakian duality}. Note that by a construction $\omega^{dR}_I$ carries a natural $\mathbb C^{\times}$-action i.e. grading. We will denote the first grading component by $\Phi_I,$ which corresponds to the vector space associated with the smallest diagonal in the quiver description. It is easy to see that this construction extends to an exact functor:
$$
\Phi_I\colon \mathcal M_{un}^{dR}(\mathbb A^I,\mathcal S_{\emptyset})\longrightarrow Vect^f
$$

\end{remark}
For every surjection $\pi\colon J\twoheadrightarrow I$ of finite sets we have the corresponding diagonal embedding $\Delta^{(\pi)}\colon \mathbb A^I\hookrightarrow \mathbb A^J,$ which induces a pair of adjoint functors for $\EuScript D$-modules:
$$\Delta^{(\pi)}_*\colon \mathcal M^{dR}_{un}(\mathbb A^I,\mathcal S_{\emptyset})\longleftrightarrow \mathcal M^{dR}_{un}(\mathbb A^J,\mathcal S_{\emptyset})\colon \Delta^{(\pi)!},$$
where a pushforward functor is exact and a $!$-pullback functor is left exact. Let us recall following \cite{KV} a definition of a \textit{quiver pushforward functor} $\Delta^{(\pi)}_*:$

\par\medskip

For an element $M\in Rep_{un}(\Gamma_I)$ we define a new quiver representation $\Delta^{(\pi)}_*M\in Rep_{un}(\Gamma_J)$ by the following rule. For every stratum $[\gamma]\colon I\twoheadrightarrow T$ of $\mathbb A^I$ we define a stratum $[\gamma_{\pi}]$ of $\mathbb A^J$ by taking a composition with $\pi.$ Thus we set $\Delta^{(\pi)}_*M_{[\gamma_{\pi }]}:=M_{[\gamma]}.$ For all other strata we define $\Delta^{(\pi)}_*M$ to be zero. For a pair of non zero vector spaces we take corresponding operators from $M$ and for all others we set operators to be zero. It is easy to see that $\Delta^{(\pi)}_*M\in Rep_{un}(\Gamma_J)$ and the under equivalence between quiver representations and $\EuScript D$-modules quiver pushforward corresponds to $\EuScript D$-modules pushforward. Now we will define a \textit{quiver $!$-pullback functor}:

\begin{Def} For every $\pi\colon J\twoheadrightarrow I$ we define quiver $\Delta^{(\pi)!}M$ by the following rule. It $\pi$ is a bijection we set $(\Delta^{(\pi)!}M)_{[\alpha]}:=M_{[\alpha_{\pi}]}$ with induced operators. In the case when $\pi$ is not a bijection we set:
\par\medskip
 For $[\alpha]\in \Gamma_I$ we define a vector space:
$$
(\Delta^{(\pi)!}M)_{[\alpha]}:=\bigcap_{\tau}\mathrm {Ker}(A^+_{\tau})
$$
where intersection is taken over all \textit{admissible paths} ${\tau:=[\gamma]\circ [\alpha_j]}\circ \dots  \circ {[\alpha_1]\circ [\alpha_{\pi}]},$ that are paths from a stratum $[\gamma]$ of dimension $|J|-\dim [\alpha]$ to a stratum $[\alpha_{\pi}]$ in $\Gamma_J$ of minimal length such that $[\alpha_i]$ can not be factored into $J\twoheadrightarrow I \twoheadrightarrow T.$ Operators $A^+$ and $A^-$ are induced from operators in $M.$

\end{Def}

\begin{Prop}\label{pull} The construction above defines functor $\Delta^{(\pi)!}$ which is the right adjoint to a quiver pushforward
$$\Delta^{(\pi)}_*\colon Rep_{un}(\Gamma_I)\longleftrightarrow Rep_{un}(\Gamma_J)\colon \Delta^{(\pi)!},$$
and thus under the equivalence between double quiver representations and unipotent $\EuScript D$-modules a quiver $!$-pullback functor corresponds to a $!$-pullback functor for $\EuScript D$-modules.

\end{Prop}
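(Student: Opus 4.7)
The plan is to verify the three assertions of the proposition in order: well-definedness of the functor on quiver representations, the adjunction, and the comparison with the $\EuScript{D}$-module $!$-pullback.

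First, I would check that the formula $(\Delta^{(\pi)!}M)_{[\alpha]} := \bigcap_{\tau} \mathrm{Ker}(A^+_\tau)$ defines an object of $Rep_{un}(\Gamma_J)$. The crucial point is to verify that the operators $A^\pm$ inherited from $M$ restrict to maps between the kernel-intersection subspaces at adjacent strata, and that the resulting collection satisfies the relations of \cite{Kho}. Compatibility on $A^+$ is almost tautological because composing with an extra edge only enlarges the family of admissible paths whose kernels are intersected; for $A^-$ one uses the quiver relations (which express $A^-A^+$ and $A^+A^-$ in terms of one another) to see that $A^-$ also preserves the defined subspaces. This step is mostly bookkeeping once one has the right indexing of admissible paths, but it is the main technical obstacle because one has to verify that the ``minimal length, non-factorizable through $I$'' condition on $\tau$ is precisely the one that makes the resulting subspaces stable under the action of $A^\pm$.

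Second, I would establish the adjunction by direct computation of the Hom sets. A morphism $\phi\colon \Delta^{(\pi)}_* N \to M$ in $Rep_{un}(\Gamma_J)$ has zero components $\phi_{[\beta]}$ whenever $[\beta]$ is not of the form $[\gamma_\pi]$, since $(\Delta^{(\pi)}_* N)_{[\beta]} = 0$ there. The non-zero components $\phi_{[\gamma_\pi]}\colon N_{[\gamma]} \to M_{[\gamma_\pi]}$ must commute with all $A^\pm$ operators of the ambient quiver $\Gamma_J$. Commuting with $A^+$-operators into neighbouring strata $[\beta]$ with $(\Delta^{(\pi)}_* N)_{[\beta]} = 0$ forces the image of $\phi_{[\gamma_\pi]}$ to lie in $\mathrm{Ker}(A^+_{[\gamma_\pi] \to [\beta]})$. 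Iterating this argument along admissible paths shows that $\mathrm{Im}(\phi_{[\gamma_\pi]}) \subset (\Delta^{(\pi)!}M)_{[\gamma]}$, and hence $\phi$ determines and is determined by a morphism $N \to \Delta^{(\pi)!}M$ in $Rep_{un}(\Gamma_I)$. The remaining commutation relations translate precisely into the quiver-representation constraints for this induced morphism. This gives the natural bijection $\mathrm{Hom}(\Delta^{(\pi)}_* N, M) \simeq \mathrm{Hom}(N, \Delta^{(\pi)!}M)$.

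Third, for the correspondence with the $\EuScript{D}$-module $!$-pullback, I would invoke uniqueness of adjoints. Under the Kapranov-Vasserot-Khoroshkin equivalence the quiver pushforward is identified (by assumption recalled above from \cite{KV}) with the $\EuScript{D}$-module pushforward $\Delta^{(\pi)}_*$. The $\EuScript{D}$-module $!$-pullback is, by definition, the right adjoint of $\Delta^{(\pi)}_*$. Since right adjoints are unique up to canonical natural isomorphism, the quiver-theoretic $\Delta^{(\pi)!}$ constructed here must correspond to the $\EuScript{D}$-module $\Delta^{(\pi)!}$ under the equivalence, which concludes the proof.
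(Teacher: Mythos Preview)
Your overall architecture matches the paper's proof exactly: (1) check that the induced $A^{\pm}$ operators preserve the kernel-intersection subspaces, (2) verify the adjunction, (3) invoke uniqueness of right adjoints. Steps (2) and (3) are fine; in fact your explicit description of the Hom-set bijection is more informative than the paper's ``by the construction''.

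There is, however, a genuine gap in step (1) for the $A^{+}$ operators. Your claim that compatibility of $A^{+}$ is ``almost tautological because composing with an extra edge only enlarges the family of admissible paths'' is not correct. Concretely: take an adjusted pair $[\alpha]\to[\alpha']$ in $\Gamma_I$, and let $x\in(\Delta^{(\pi)!}M)_{[\alpha']}$. To show $A^{+}_{[\alpha_\pi],[\alpha'_\pi]}(x)\in(\Delta^{(\pi)!}M)_{[\alpha]}$ you need $A^{+}_{\sigma}\circ A^{+}_{[\alpha_\pi],[\alpha'_\pi]}(x)=0$ for every admissible path $\sigma$ ending at $[\alpha_\pi]$. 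But the concatenated path $\sigma\circ[\alpha_\pi]\circ[\alpha'_\pi]$ is \emph{not} admissible to $[\alpha'_\pi]$: the intermediate vertex $[\alpha_\pi]$ does factor through $J\twoheadrightarrow I$, violating the defining condition on admissible paths. So you cannot conclude vanishing directly from the hypothesis on $x$. The paper handles this by invoking the first (Jacobi-type) relation from \cite{Kho}, which rewrites $A^{+}_{[\alpha_1],[\alpha_\pi]}\circ A^{+}_{[\alpha_\pi],[\alpha'_\pi]}$ as minus the sum of composites through two other strata $[\beta],[\delta]$ that are \emph{not} factorizable through $I$; those composites then extend to genuine admissible paths $\tau_1,\tau_2$ to $[\alpha'_\pi]$, on which $x$ vanishes by assumption. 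Your treatment of $A^{-}$ (using the mixed $A^{+}A^{-}$ relation) is on the right track and parallels the paper's iterative argument with the ``last condition'' from \cite{Kho}, but the $A^{+}$ case needs this same kind of relation-based rewriting rather than a tautology.
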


\begin{proof} First we will show that operators in a quiver representation $M$ induce will defined operators in $\Delta^{(\pi)!}M.$ In the case when $\pi$ is a bijection there is nothing to prove. Let us consider a case when $\pi$ is not a bijection:

\par\medskip

Without loss of a generality we can consider a case when $|J|\geq 3$ (in the case when $|J|<3$ there is nothing to prove). Let $[\pi]\rightarrow [\pi']$ be an adjusted pair of strata in $\Gamma_I.$ Operator $A^+_{[\pi],[\pi']}\colon M_{[\pi']}\longrightarrow M_{[\pi]}$ descends to an operator $\Delta^{(\pi)!}M_{[\pi']}\longrightarrow\Delta^{(\pi)!}M_{[\pi]},$ because we have the condition one from \cite{Kho}. More precisely let $\tau=[\gamma]\circ \dots \circ [\alpha_1]\circ [\pi]$ be an arbitrary admissible path to the stratum $[\pi]$ then we have an admissible path $\tau_1$ (resp. $\tau_2$) to the stratum $[\pi']$ which passes through $[\alpha_1]$ and equal to $\tau $ everywhere except vertex $[\beta]$ (resp. $[\delta]$) (these strata have the same dimension as $[\alpha_1]).$ We have a picture like this:
\begin{equation*}
\begin{diagram}[height=2.4em,width=4.2em]
 &  & M_{[\alpha_1]}  & & \\
  & \ruTo^{A^+_{[\alpha_1],[\pi]}} & \uTo_{A^+_{[\alpha_1],[\beta]}} & \luTo^{A^+_{[\alpha_1],[\delta]}} &\\
M_{[\pi]} &  & M_{[\beta]} &   &M_{[\delta]}\\
  & \luTo_{A^+_{[\pi],[\pi']}} & \uTo^{A^+_{[\beta],[\pi']}} & \ruTo_{A^+_{[\delta],[\pi']}} &\\
  &  & M_{[\pi']}  & & \\
\end{diagram}
\end{equation*}
Let $x\in (\Delta^{(\pi)!}M)_{[\pi']},$ then in particular $x\in \mathrm {Ker}A^+_{\tau_1}$ and $x\in \mathrm {Ker}A^+_{\tau_1}.$ Then by the first condition from \textit{ibid} we have: $$A^+_{[\alpha_1],[\pi]}\circ A^+_{[\pi],[\pi]'}=-A^+_{[\alpha_1],[\beta]}\circ A^+_{[\beta],[\pi]'} -A^+_{[\alpha_1],[\delta]}\circ A^+_{[\delta],[\pi]'}$$ and thus $A^+_{\tau}\circ A^+_{[\pi],[\pi']}(x)=-(A^+_{\tau_1}+A^+_{\tau_2})(x)=0.$

An operator $A^-_{[\pi'],[\pi]}\colon M_{[\pi]}\longrightarrow M_{[\pi']}$ descends to an operator $\Delta^{(\pi)!}M_{[\pi]}\longrightarrow\Delta^{(\pi)!}M_{[\pi']},$ because we have the last condition from \textit{ibid}. More precisely let $\tau=[\gamma]\circ \dots \circ [\alpha_1][\pi']$ be an arbitrary admissible path to the stratum $[\pi'].$ Then we have an adjusted strata $[\pi_1]\rightarrow [\pi].$ We have a picture like this:
\begin{equation*}
\begin{diagram}[height=1.7em,width=3.4em]
 &  & M_{[\pi_1]}  & & \\
  & \ruTo^{A^+_{[\pi_1],[\pi]}} &  & \rdTo^{A^-_{[\pi_1],[\alpha_1]}} &\\
M_{[\pi]} &  &  &   & M_{[\alpha_1]}\\
  & \rdTo_{A^-_{[\pi'],[\pi]}} &  & \ruTo_{A^+_{[\pi],[\alpha_1]}} &\\
  &  & M_{[\pi']}  & & \\
\end{diagram}
\end{equation*}
Now due to the last condition from \textit{ibid} we have the following:
$$A^+_{[\alpha_1],[\pi']}\circ A^-_{[\pi'],[\pi]}=-A^-_{[\alpha_1],[\pi_1]}\circ A^+_{[\pi],[\pi_1]}.$$
Let $x\in (\delta^{(\pi)!}M)_{[\pi]}$ if $A^-_{[\alpha_1],[\pi_1]}\circ A^+_{[\pi],[\pi_1]}$ does not vanish we pick up an adjusted to $[\alpha_1]$ vertex $[\alpha_2]$ in a path $\tau$ and consider another quadruple of strata $[\alpha_1], [\pi_1], [\alpha_2]$ and $[\pi_2],$ where $[\alpha_2]\rightarrow [\pi_2]\rightarrow[\pi_2]\rightarrow [\alpha_1]$ and repeat the procedure. At some point we will reach the strata of dimension $|J|-\dim [\alpha]$ and thus obtain the admissible path $\tau'$ to a stratum $[\pi].$  Thus an operator $A_{\tau'}^+$ must vanish on $x$ and by the last condition from \textit{ibid} we have  $A^-_{[\pi'],[\pi]}(x)\in \Delta^{(\pi)!}M_{[\pi']}.$
All conditions from \textit{ibid} for $A^+$ operators and $A^-$ operators are satisfied because they descend from a double quiver representation $M.$ Moreover it is easy to see that $\Delta^{(\pi)!}$ extends to a functor which is the right adjoint to a quiver direct image functor $\Delta^{(\pi)}_*$ by the construction. Thus by the uniqueness of the right adjoint functor we obtain that under the equivalence with $\EuScript D$-modules a $!$-quiver pullback $\Delta^{(\pi)!}$ corresponds to a $!$-pullback functor for $\EuScript D$-modules.
\end{proof}

With every surjection $\pi\colon J\twoheadrightarrow I$ we can associate a space $U^{(\pi)}:=\{(x_j)\,|\,x_{i}\neq x_{i'}\, \mathrm{if}\, \pi(i)\neq \pi(i')\}$ with a corresponding open inclusion $j^{(\pi)}\colon U^{(\pi)}\hookrightarrow \mathbb A^J.$ A space $U^{(\pi)}$ inherits a diagonal stratification from $\mathbb A^J:$ strata of  $U^{(\pi)}$ are given by equivalences classes of surjections $[\gamma]\colon J\twoheadrightarrow T$
which are greater or equal to $[\pi]\colon J \rightarrow I.$ We have an exact $*$-pullback functor along map $j^{(\pi)}$ for $\EuScript D$-module:
$$j^{(\pi)*}\colon \mathcal M^{dR}_{un}(\mathbb A^J,\mathcal S_{\emptyset})\longrightarrow\mathcal M^{dR}_{un}(U^{(\pi)},\mathcal S_{\emptyset}),$$
Since $U^{(\pi)}$ is a stratified subspace of $\mathbb A^J$ methods of \cite{Kho} can be used to study category of unipotent $\EuScript D$-modules on it. Let us recall following \cite{KV} a definition of a \textit{quiver $*$-pullback functor} $j^{(\pi)*}:$

\par\medskip

For an element $M\in Rep_{un}(\Gamma_J)$ we define a new quiver representation  $j^{(\pi)*}M\in Rep_{un}(\Gamma_{(\pi)})$ by the following rule. For every stratum $[\gamma]\colon J\twoheadrightarrow T$ of $U^{(\pi)}$ we define $j^{(\pi)*}M_{[\gamma]}:=M_{[\gamma]}.$ Operators are induced from operators in $M.$
\par\medskip

Let $\pi \colon J \twoheadrightarrow I$ be a surjection. For unipotent $\EuScript D$-modules we have a standard operation of the exterior tensor product, which is a functor
$$\boxtimes_{I}\colon \times_I\mathcal M_{un}^{dR}(\mathbb A^{J_i},\mathcal S_{\emptyset})\longrightarrow \mathcal M_{un}^{dR}(\mathbb A^{J},\mathcal S_{\emptyset}),\quad J_i:=\pi^{-1}(i).$$
 Following \cite{fdmod} we define a \textit{quiver exterior tensor product}:
\par\medskip
Let $[\gamma]\colon J \twoheadrightarrow T$ be a stratum, then if $\pi$ can be factored as $\pi= \gamma\circ \beta,$ where $\beta\colon T\twoheadrightarrow I$ we define
$$
(\boxtimes_I M_i)_{[\gamma]}:=\otimes_I M_{[\gamma_i]},
$$
where $\gamma_i\colon J_i\twoheadrightarrow T_i,$ and $T_i=\beta^{-1}(i).$ Otherwise we define $(\boxtimes_I M_i)_{[\gamma]}$ to be zero. Operators in a quiver representation are defined by the following rule: if $[\gamma]$ and $[\gamma']$ is a pair of adjusted strata which satisfy a property above we define $A^+$ operators and $A^-$ operators as a tensor product of appropriate operators acting between vector spaces assigned to a strata $[\gamma_i]$ and $[\gamma_i'],$ otherwise operators are defined to be zero. It is easy to see that $\boxtimes_I M_i$ is an element of $Rep_{un}(\Gamma_J)$ and moreover $\boxtimes_I$ extends to a functor and under the equivalence between quiver representations and $\EuScript D$-modules corresponds to $\EuScript D$-module exterior tensor product.

\subsection{Factorization algebra $\Lambda(\mathfrak g)$} In this subsection out of conilpotent Lie bialgebra we built a certain factorization algebra.

\begin{Prop}\label{Gluing} With every conilpotent Lie bialgebra $\mathfrak g$ we associate unipotent $\EuScript D$-module $\Lambda(\mathfrak g)_I$ on $\mathbb A^I.$
\end{Prop}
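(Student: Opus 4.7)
The plan is to build $\Lambda(\mathfrak{g})_I$ as a double quiver representation of $\Gamma_I$ and then invoke the Khoroshkin equivalence \cite{Kho} recalled above to obtain the promised unipotent $\EuScript D$-module. In other words, I would not try to construct the $\EuScript D$-module by hand on $\mathbb{A}^I$, but rather specify vector spaces on the vertices of $\Gamma_I$ together with operators $A^+, A^-$ on each arrow, and then check that the result lies in $Rep_{un}(\Gamma_I)$.

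First, for each vertex $[\pi]\colon I\twoheadrightarrow T$ of $\Gamma_I$, I would assign a vector space built from the canonical filtration of $\mathfrak{g}$: tensor products of the form $\bigotimes_{t\in T}\mathfrak{g}_{|\pi^{-1}(t)|}$, summed over the ordered representatives of the equivalence class $[\pi]$ and then antisymmetrized with respect to the induced $\Sigma_T$-action, in the same spirit as the $|I|=2$ prescription $\bigl(\mathfrak{g}_1\otimes\mathfrak{g}_1\oplus\mathfrak{g}_1\otimes\mathfrak{g}_1\bigr)^{-\Sigma_2}$ given in the introduction. In particular, on the minimal diagonal $[\pi]\colon I\twoheadrightarrow *$ one places $\mathfrak{g}_{|I|}$, and on the open stratum one places the antisymmetrization of $\bigotimes_{i\in I}\mathfrak{g}_1$.

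Second, on each codimension-one adjunction $[\pi]\to[\pi_{ij}]$ (the merging of $i,j\in T$ into a single point $*$) I would define the two operators using the Lie bialgebra structure: the operator $A^+_{[\pi],[\pi_{ij}]}$ towards the smaller stratum is induced by the cobracket $\delta$, which by conilpotency sends $\mathfrak{g}_{|\pi^{-1}(i)|+|\pi^{-1}(j)|}$ into $\mathfrak{g}_{|\pi^{-1}(i)|}\otimes \mathfrak{g}_{|\pi^{-1}(j)|}$; the operator $A^-_{[\pi_{ij}],[\pi]}$ towards the larger stratum is induced by the bracket $[-,-]$ applied to the two factors at positions $i$ and $j$, using the filtration compatibility $[\mathfrak{g}_a,\mathfrak{g}_b]\subset\mathfrak{g}_{a+b-1}$. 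The antisymmetrization in step one guarantees that these maps descend to the assigned vector spaces.

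Third, I would verify the three Khoroshkin relations listed in \cite{Kho} on each codimension-two configuration in $\Gamma_I$: the relation involving only $A^+$'s translates into coJacobi for $\delta$; the relation involving only $A^-$'s translates into Jacobi for $[-,-]$; and the mixed relation coupling $A^+$ and $A^-$ coincides precisely with the Lie bialgebra $1$-cocycle compatibility between $\delta$ and $[-,-]$. The main obstacle is this last mixed relation, because it has to be checked uniformly over all codimension-two adjacencies in $\Gamma_I$ (both the ``star'' type, where three distinct pairs of indices collide, and the ``chain'' type, where two distinct pairs merge successively into a triple diagonal); but once one fixes the signs in the antisymmetrization convention of step one, the cocycle identity delivers this relation verbatim. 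Functoriality of $\Lambda$ in $\mathfrak{g}$ is then immediate, and Khoroshkin's equivalence converts the quiver representation into the required unipotent $\EuScript D$-module $\Lambda(\mathfrak{g})_I$.
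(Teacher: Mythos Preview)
Your approach is essentially identical to the paper's: the paper also builds $\Lambda(\mathfrak g)_I$ as a double quiver representation of $\Gamma_I$, assigning to each stratum $[\pi]\in Q(I,k)$ the antisymmetrized sum $\bigoplus_{\pi\in[\pi]}(\mathfrak g_{i_1}\otimes\cdots\otimes\mathfrak g_{i_k})^{-\Sigma_k}$, defining $A^-$ via the bracket and $A^+$ via the cobracket, and then verifying Khoroshkin's relations through Jacobi, coJacobi, and the $1$-cocycle identity exactly as you outline.

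Two small points. First, your phrases ``towards the smaller stratum'' for $A^+$ and ``towards the larger stratum'' for $A^-$ are swapped relative to what your own formulas say (and to the paper's conventions): the cobracket sends the smaller-stratum space $\mathfrak g_{a+b}$ to the larger-stratum space $\mathfrak g_a\otimes\mathfrak g_b$, not the other way round. This is harmless since your maps themselves are correct. Second, and more substantively, you omit the last step in the paper's argument: one must check that the monodromy operators are nilpotent, which is what guarantees the representation lies in $Rep_{un}(\Gamma_I)$ rather than merely in the larger category of all double quiver representations. The paper dispatches this using the filtration compatibilities $[\mathfrak g_i,\mathfrak g_j]\subset\mathfrak g_{i+j-1}$ and $\delta(\mathfrak g_k)\subset\bigoplus_{i+j=k}\mathfrak g_i\otimes\mathfrak g_j$, which force $A^-A^+$ to strictly decrease filtration degree on each tensor factor.
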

\begin{proof} We will use quiver description of a category  $\mathcal M_{un}^{dR}(\mathbb A^I,\mathcal S_{\emptyset})$ from \cite{Kho}:
\par\medskip

Let $[\pi]\in Q(I,k)$ be a $k$-dimensional strata, then with an element of the equivalence class $\pi\colon I\twoheadrightarrow [k]\in [\pi]$ we can associate a vector space $\mathfrak g_{\pi}=\mathfrak g_{i_1}\otimes \dots \otimes \mathfrak g_{i_k},$ where  $i_j=|\pi^{-1}(j)|.$ Hence for a stratum $[\pi]$ we define the corresponding vector space by the rule
$$\mathfrak g_{[\pi]}:=\summm_{\pi\colon I\twoheadrightarrow [k] \in [\pi]}\mathfrak g_{\pi}^{-\Sigma_{k}}.$$ For an adjusted pair of strata $[\pi]\twoheadrightarrow [\pi_{pq}]$ we choose a representative $I\twoheadrightarrow [k]\in [\pi],$ such that $p$ will be the first coordinate and $q$ will be the second one. Then we can define a map
 $A^-_{\pi_{pq},\pi}\colon \mathfrak g_{\pi}\longrightarrow \mathfrak g_{\pi_{pq}}$ by the rule $([-,-]\otimes id \otimes \dots \otimes id).$ Thus we define a map $\mathfrak g_{\pi}\longrightarrow \mathfrak g_{[\pi_{pq}]}$ as:
$$
A^-_{[\pi_{pq}],\pi}=\summm_{\sigma\in \Sigma_{k-1}} \tau_{\sigma} \circ A^-_{\pi_{pq},\pi}.
$$
And finally we define a map $A^-_{[\pi_{pq},T]}\colon \mathfrak g_{[\pi]}\longrightarrow \mathfrak g_{[\pi_{pq}]}$ by the rule:
 $$A^-_{[\pi_{pq}],[\pi]}:=\frac{1}{k!}\summm_{\sigma\in \Sigma_{k}} A^-_{[\pi_{pq}],\pi}\circ \tau_{\sigma}.$$ Analogically we define a map $A^+_{[\pi],[\pi_{pq}]}\colon \mathfrak g_{[\pi_{pq}]}\longrightarrow \mathfrak g_{[\pi]}$ but instead of a bracket we use a cobracket. Thus we set $A^+_{\pi,\pi_{pq}}:=\delta_{i_1,i_2}\otimes id \otimes \dots \otimes id$ and then:
$$
A^+_{[\pi],\pi_{pq}}:=\summm_{\sigma\in \Sigma_{k}} \tau_{\sigma} \circ A^+_{\pi,\pi_{pq}},
$$
and thus:
$$A^+_{[\pi],[\pi_{pq}]}:=\frac{1}{(k-1)!}\summm_{\sigma\in \Sigma_{k-1}} A^+_{[\pi],\pi_{pq}}\circ \tau_{\sigma}.$$
\par\medskip

We need to check conditions from \textit{ibid}:
\par\medskip
The first two identities follow from Jacobi and coJacobi identities for a Lie bialgebra $\mathfrak g.$ Let us consider a case of a map $A^-_{[\pi_{ij},\pi]}.$ Choice a representative $\pi \colon I \twoheadrightarrow [k]$ such that $i,j,k\in [k]$ will be a first three coordinates. Then $A^-_{[\pi_{jk}],\pi}$ acts on $\pi$ component as $A^-_{[\pi_{ij}],\pi}\circ \tau_{(ijk)}$ and $A^-_{[\pi_{ki}],\pi}$ acts on $\pi$ component as $A^-_{[\pi_{ij}],\pi}\circ \tau_{(ijk)^2}.$ Hence from Jacobi identity we obtain that:
$$
A^-_{[\pi_{ijk}],[\pi_{jk}]}\circ A^-_{[\pi_{jk}],[\pi]}+A^-_{[\pi_{jki}],[\pi_{ki}]}\circ A^-_{[\pi_{ki}],[\pi]}+A^-_{[\pi_{ijk}],[\pi_{ij}]}\circ A^-_{[\pi_{ij}],[\pi]}=0.
$$
Case of $+$-map can be treated analogically and follows from coJacobi identity for a Lie cobracket.
\par\medskip
The third condition from \textit{ibid} follows from $1$-cocycle property for a Lie bialgebra. Choice a representative $\pi \colon I \twoheadrightarrow [k]$ such that $i,j,k\in [k]$ will be first three coordinates. Then $A^+_{\pi_{jk},\pi_{ijk}}\circ A^-_{\pi_{ijk},\pi_{ij}}$ acts between $\pi_{ij}$ and $\pi_{jk}$ component as $\tau_{(ij)}\circ A^+_{\pi_{ij},\pi_{ijk}}\circ A^-_{\pi_{ijk},\pi_{ij}}$ and $A^-_{\pi,\pi_{jk}}\circ A^+_{\pi,\pi_{ij}}$
acts as $A^-_{\pi,\pi_{ij}}\circ\tau_{(ijk)}\circ  A^+_{\pi,\pi_{ij}}.$ Thus by $1$-cocycle property of a Lie bialgebra we obtain that:
$$
A^+_{[\pi_{jk}],[\pi_{ijk}]}\circ A^-_{[\pi_{ijk}],[\pi_{ij}]}+A^-_{[\pi],[\pi_{jk}]}\circ A^+_{[\pi],[\pi_{ij}]}=0.
$$
In remains to show that monodromy operators are nilpotent \textit{ibid}. This obviously follows from a Lie property of a canonical filtration.
\end{proof}

\begin{Prop}\label{ran}
For every $\pi\colon J\twoheadrightarrow I$ conilpotent filtration induces a natural isomorphism:
$$\theta(\pi)\colon \Lambda(\mathfrak g)_I \overset{\sim}{\longrightarrow}  \Delta^{(\pi)!}\Lambda(\mathfrak g)_J,$$
compatible with compositions of surjections.

\end{Prop}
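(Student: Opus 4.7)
My plan is to construct $\theta(\pi)$ stratum by stratum in the quiver description from \cite{Kho}, and then check it is an isomorphism of $\Gamma_I$-representations. For a stratum $[\alpha]\colon I\twoheadrightarrow [k]$ of $\mathbb A^I$, the space $\mathfrak g_{[\alpha]}$ assigned by $\Lambda(\mathfrak g)_I$ is built from tensor components $\mathfrak g_{i_1}\otimes\cdots\otimes\mathfrak g_{i_k}$ with $i_l=|\alpha^{-1}(l)|$, skew-symmetrized over $\Sigma_k$. The corresponding stratum $[\alpha_\pi]=[\alpha\circ\pi]$ of $\mathbb A^J$ has the same shape but with enlarged levels $j_l=|(\alpha\circ\pi)^{-1}(l)|=\sum_{m\in\alpha^{-1}(l)}|\pi^{-1}(m)|\geq i_l$. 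Since the conilpotent filtration provides inclusions $\mathfrak g_{i_l}\hookrightarrow\mathfrak g_{j_l}$, tensoring them slot-by-slot and averaging over the outer $\Sigma_k$-action defines the candidate map $\theta(\pi)|_{[\alpha]}\colon\mathfrak g_{[\alpha]}\to\mathfrak g_{[\alpha_\pi]}$. Because the bracket and cobracket of $\mathfrak g$ both preserve the canonical filtration (as recalled in subsection~2.1), the explicit formulas for $A^\pm$ from the proof of Proposition~\ref{Gluing} give compatibility of $\theta(\pi)$ with all $A^\pm$ operators.

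The next step is to verify that the image of $\theta(\pi)|_{[\alpha]}$ is contained in $\bigcap_\tau \ker A^+_\tau=(\Delta^{(\pi)!}\Lambda(\mathfrak g)_J)_{[\alpha]}$. Each $A^+$ in $\Lambda(\mathfrak g)_J$ applies a component of the cobracket $\delta$ decomposed along the filtration. The admissibility condition on $\tau$ (intermediate vertices not factoring through $J\twoheadrightarrow I\twoheadrightarrow T$) forces the path to refine the $\pi$-induced partition at some coordinate $l$, that is, to split $\mathfrak g_{j_l}$ more finely than the block structure of $\pi$ at $l$ allows. The decomposition property $\delta(\mathfrak g_k)\subset\bigoplus_{a+b=k}\mathfrak g_a\otimes\mathfrak g_b$ then forces the iterated cobracket image of any element of $\mathfrak g_{i_l}\subset \mathfrak g_{j_l}$ to vanish as soon as more than $i_l$ elementary pieces are demanded. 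Hence $A^+_\tau$ kills the image of $\theta(\pi)$.

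The core of the argument is the reverse containment: every element of $\bigcap_\tau \ker A^+_\tau$ lies in the image of $\theta(\pi)$. The analysis reduces to coordinates: a tensor $x_1\otimes\cdots\otimes x_k\in\mathfrak g_{j_1}\otimes\cdots\otimes\mathfrak g_{j_k}$ annihilated by all admissible $A^+_\tau$ must, in each slot $l$, be killed by the iterated cobracket coming from the minimal admissible path that refines exclusively position $l$; the defining identity $\mathfrak g_{i_l}=\{y\in\mathfrak g:\delta^{i_l}(y)=0\}$ of the conilpotent filtration then forces $x_l\in\mathfrak g_{i_l}$. Combining slot-wise information identifies the kernel intersection with $\mathfrak g_{i_1}\otimes\cdots\otimes\mathfrak g_{i_k}$, which is exactly the image of $\theta(\pi)$; one finishes by checking that this identification is compatible with the outer $\Sigma_k$-averaging on both sides. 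Compatibility of the family $\{\theta(\pi)\}$ with compositions $\pi''=\pi'\circ\pi$ is then automatic, since the filtration inclusions $\mathfrak g_{i}\hookrightarrow\mathfrak g_{j}\hookrightarrow\mathfrak g_{l}$ compose to the direct filtration inclusion. I expect the main technical obstacle to lie in this reverse step, specifically in matching admissible paths in $\Gamma_J$ one-to-one with iterated cobracket components and coordinating this matching with the permutation sums $\sum_\sigma\tau_\sigma$ appearing in the explicit formulas for $A^\pm$ in Proposition~\ref{Gluing}.
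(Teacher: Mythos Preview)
Your proposal is correct and follows essentially the same approach as the paper: both use the quiver description of $\Delta^{(\pi)!}$ from Proposition~\ref{pull} as an intersection of kernels of $A^+_\tau$ over admissible paths, identify the map $\theta(\pi)$ via the filtration inclusions $\mathfrak g_{i_l}\hookrightarrow\mathfrak g_{j_l}$ slot by slot, and then match the kernel intersection with the source using the defining property $\mathfrak g_i=\ker\delta^i$ of the conilpotent filtration. The paper's write-up is terser---it passes through the canonical colimit map $can\colon\mathfrak g_\alpha\to\mathfrak g_{[\alpha]}$, first reduces by $\Sigma_{|J|}$-equivariance to equivalence classes of admissible paths, and then asserts the identification of $\bigcap\ker A^+_{[\tau]}$ with $\mathfrak g_{[\alpha]}$ with the phrase ``it is easy to see''---whereas you spell out both containments and correctly flag the coordination between admissible paths and the $\Sigma_k$-symmetrization as the place where care is needed; this is exactly the step the paper absorbs into its equivariance remark.
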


\begin{proof} We will use a description of quiver $!$-pullback from Proposition \ref{pull}.  Let us first consider a case when $\pi\colon I\overset{\sim}{\longrightarrow} I$ is a bijection. For every stratum $[\gamma]\colon I\twoheadrightarrow T$ we have a new stratum $[\gamma_{\pi}].$ Then we define an operator:
$$
\rho\colon \mathfrak g_{[\gamma]}\longrightarrow \mathfrak g_{[\gamma_{\pi}]},
$$
for a representative $\gamma\in [\gamma]$ by taking a signed twist morphism $\tau_{\pi}$ associated with a permutation $\pi:$
$$
\tau_{\pi}\colon \mathfrak g_{i_1}\otimes \dots \otimes \mathfrak g_{i_k}\longrightarrow \mathfrak g_{i_1}\otimes \dots \otimes \mathfrak g_{i_k}.
$$
Due to skew symmetry of a bracket and a cobracket isomorphisms $\rho$ extend to an isomorphism of quivers $\Lambda(\mathfrak g)_I \overset{\sim}{\longrightarrow}  \Delta^{(\pi)!}\Lambda(\mathfrak g)_I.$ It is easy to see that defined isomorphisms are compatible with compositions of bijections, i.e. $\EuScript D$-module $\Lambda(\mathfrak g)_I$ is equivariant.
\par\medskip
Now we need to consider a case of a non bijective surjection $\pi\colon J \twoheadrightarrow I.$ For a given stratum $[\alpha]$ in $\mathbb A^I$ and any stratum $[\gamma]$ of dimension $|J|-\dim [\alpha]$ in $\mathbb A^J$ choose a representatives $\alpha$ and $\gamma.$ Denote by $\tau$ a sequence of elementary surjections from $\gamma$ to $\alpha_{\pi}.$ Denote by $A^+_{\tau}$ a component of an operator $A^+_{[\tau]}$ which acts between $\mathfrak g_{\alpha_{\pi}}$ and $\mathfrak g_{\gamma}.$ Operator $A^+_{\tau}$ acts as a composition of appropriate components of $\delta$ and signed twisting maps. We have a canonical colimit map $can\colon \mathfrak g_{\alpha} \longrightarrow \mathfrak g_{[\alpha]}.$
Then it is easy to see that we have an isomorphism $can\colon \mathrm {Ker} A^+_{\tau}\overset{\sim}{\longrightarrow} \mathrm {Ker} A^+_{[\tau]}.$
Since we have proved that $\EuScript D$-module $\Lambda (\mathfrak g)_J$ is $\Sigma_{|J|}$-equivariant it is enough to consider equivalence classes of admissible pathes with respect to a natural symmetric group action on $\mathbb A^J.$ Thus by a skew symmetry of a cobracket and remarks above for a stratum $[\alpha]$ in $\mathbb A^I$ we obtain:
$$
\bigcap_{[\tau]/ \Sigma_{|J|}}\mathrm {Ker} A^+_{[\tau]}\overset{\sim}{\longrightarrow} \mathfrak g_{[\alpha]}.
$$
It is easy to see that constructed isomorphisms extends to an isomorphism of quivers which are compatible with compositions of surjections.

\end{proof}

\begin{remark} Following Beilinson and Drinfeld \cite{BD} it is natural to call $\Lambda(\mathfrak g):=\{\Lambda(\mathfrak g)_I,\theta(\pi)\}_{Fin^{\circ}}$ an \textit{unipotent left $\EuScript D$-module on Ran prestack} of $\mathbb A^1.$ These objects with obvious morphisms form a category which will be denoted by $\mathcal M_{un}^{dR}(Ran(\mathbb A^1)).$

\end{remark}

\begin{Prop}\label{fact} Unipotent $\EuScript D$-module $\Lambda(\mathfrak g)$ on $Ran(\mathbb A)$ has the following additional structure:

\begin{itemize}
\item For every surjection $\pi \colon J\twoheadrightarrow I,$ we have a natural isomorphism:
$$c_{\pi}\colon j^{(\pi)*}\Lambda(\mathfrak g)_J \overset{\sim}{\longrightarrow} j^{(\pi)*}\boxtimes_{I} \Lambda(\mathfrak g)_{J_i},\qquad J_i:=\pi^{-1}(i).$$
\item  Compatibilities between $c'$s and compositions of surjections of finite sets.
\item Compatibilities between maps $c$ and $\theta.$
\end{itemize}

\end{Prop}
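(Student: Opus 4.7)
The strategy is to work entirely with the quiver description of $\mathcal M_{un}^{dR}$ from \cite{Kho}, transporting the factorization data through the equivalence. The key geometric input is that on the open set $U^{(\pi)}$ the only strata $[\gamma]\colon J\twoheadrightarrow T$ that appear are those through which $\pi$ factors, i.e.\ $\pi=\beta\circ\gamma$ for some $\beta\colon T\twoheadrightarrow I$, and that the codimension-one adjunctions $[\gamma]\twoheadrightarrow[\gamma_{pq}]$ which still live in $U^{(\pi)}$ are precisely those for which $p,q\in T_i:=\beta^{-1}(i)$ for some $i\in I$ (merging indices from different fibers would destroy the condition that $\pi$ factor through the refined stratum). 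Hence $j^{(\pi)*}\Lambda(\mathfrak g)_J$, viewed as a quiver on $U^{(\pi)}$, decomposes arrow-by-arrow into ``independent'' copies indexed by $i\in I$.

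To produce $c_\pi$ at each vertex, given $[\gamma]$ with $\pi=\beta\circ\gamma$, restrict $\gamma$ to each fiber to obtain $\gamma_i\colon J_i\twoheadrightarrow T_i$. Collecting tensor factors according to the partition $T=\bigsqcup_{i\in I}T_i$ and inserting the sign of the block permutation provides a canonical, $\Sigma$-equivariant isomorphism
\[
c_\pi([\gamma])\colon \mathfrak g_{[\gamma]}\;\overset{\sim}{\longrightarrow}\; \bigotimes_{i\in I}\mathfrak g_{[\gamma_i]},
\]
which is exactly the definition of $(\boxtimes_I\Lambda(\mathfrak g)_{J_i})_{[\gamma]}$ recalled above. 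By the geometric observation of the previous paragraph, all $A^\pm$-operators along adjunctions $[\gamma]\twoheadrightarrow[\gamma_{pq}]$ surviving on $U^{(\pi)}$ act on a single tensor factor $\mathfrak g_{[\gamma_i]}$ through the bracket/cobracket components used in Proposition \ref{Gluing}; on the right-hand side the operators of $\boxtimes_I$ are, by construction, the corresponding operators of $\Lambda(\mathfrak g)_{J_i}$ tensored with identities on the other factors. Matching these on each arrow then promotes the vertex-wise $c_\pi([\gamma])$ to an isomorphism of double quiver representations, and by the Tannakian-type equivalence of \cite{Kho} to an isomorphism of $\EuScript D$-modules.

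For the compositional compatibility, given $K\twoheadrightarrow_\rho J\twoheadrightarrow_\pi I$, one has $U^{(\pi\circ\rho)}\subseteq U^{(\rho)}$, and on a stratum $[\delta]\colon K\twoheadrightarrow S$ of $U^{(\pi\circ\rho)}$ the induced nested partition $S=\bigsqcup_i\bigsqcup_{j\in J_i}S_{ij}$ lets one verify by a direct tensor-regrouping that $c_{\pi\circ\rho}([\delta])=\bigl(\bigotimes_i c_\rho|_{J_i}([\delta_i])\bigr)\circ c_\pi([\delta])$. Compatibility of $c_\pi$ with $\theta(\rho)$ is obtained by the same yoga: both $c_\pi$ and $\theta(\rho)$ are defined vertex-wise in terms of partition-regrouping and signed twists of tensor factors, and on any stratum $[\gamma]$ on which both are defined they agree up to the skew symmetry of the bracket and cobracket already used in Proposition \ref{ran}.

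The main difficulty I expect is not conceptual but combinatorial: keeping sign conventions coherent between the skew-invariant models of $\mathfrak g_{[\gamma]}$ used in Proposition \ref{Gluing}, the block permutations used to regroup tensor factors, and the bijection-equivariance isomorphisms used in the first half of Proposition \ref{ran}. In particular, one must check that the sign of the block permutation $T\cong\bigsqcup_i T_i$ inserted into $c_\pi$ is exactly the sign needed to make the vertex isomorphisms commute with every surviving $A^+$ and $A^-$, and that these signs recombine correctly when composing two surjections. All of this reduces to the skew symmetry of $[-,-]$ and $\delta$ and the coherence of the symmetric-group actions, but the bookkeeping is the real labor of the proof.
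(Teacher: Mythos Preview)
Your approach is essentially the same as the paper's: both work in the quiver model, identify the strata of $U^{(\pi)}$ as those $[\gamma]$ through which $\pi$ factors, and build $c_\pi$ vertex-wise by regrouping the tensor factors of $\mathfrak g_{[\gamma]}$ along the partition $T=\bigsqcup_i T_i$. The paper phrases the vertex isomorphism as a canonical colimit map coming from a subdiagram inclusion (thereby hiding the block-permutation signs you worry about inside the ``$-\Sigma$'' colimit), and then dismisses the compatibility with $A^\pm$, with compositions, and with $\theta$ as ``easy to see''; your write-up is in fact more explicit on exactly these points than the paper itself.
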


\begin{proof}

Let $\pi\colon J\twoheadrightarrow I$ be a surjection and let $[\gamma]\colon J \twoheadrightarrow T$ be a stratum in $\mathbb A^J,$ then if $\pi$ can be factored as $\pi=\beta\circ \gamma,$ where $\beta\colon T\twoheadrightarrow I$ we have
$$
(\boxtimes_{I} \Lambda(\mathfrak g)_{J_i})_{[\gamma]}:=\otimes_I \mathfrak g_{[\gamma_i]}=\otimes_I (\summm_{J_i\twoheadrightarrow T_i}\otimes_{t\in T_i} \mathfrak g_{(J_i)_t})^{-\Sigma},
$$
where $\gamma_i\colon J_i\twoheadrightarrow T_i,$ and $T_i=\beta^{-1}(i).$ On another hand we have:
$$(\Lambda (\mathfrak g)_{J})_{[\gamma]}=( \summm_{\gamma \colon J\twoheadrightarrow T \in [\gamma]}\otimes_T  \mathfrak g_{J_t})^{-\Sigma_{|T|}}.$$ Note that a diagram which underlies $(\boxtimes_{I} \Lambda(\mathfrak g)_{J_i})_{[\gamma]}$ is naturally a subdiagram of a diagram which underlies  $(\Lambda (\mathfrak g)_{J})_{[\gamma]}$ thus we obtain a canonical map:
$$can\colon (\Lambda (\mathfrak g)_{J})_{[\gamma]}\overset{\sim}{\longrightarrow} (\boxtimes_{I} \Lambda(\mathfrak g)_{J_i})_{[\gamma]},$$
which is an isomorphism obviously. According to the definition of a quiver pullback $j^{(\pi)}$ it is easy to see that canonical maps define an isomorphism of quivers:
$$c_{\pi}\colon j^{(\pi)*}\Lambda(\mathfrak g)_J \overset{\sim}{\longrightarrow} j^{(\pi)*}\boxtimes_{I} \Lambda(\mathfrak g)_{J_i},$$
which satisfies all properties listed above.

\end{proof}

\begin{remark}

Following Beilinson and Drinfeld \cite{BD} we say that an object $\EuScript N$ in $\mathcal M_{un}^{dR}(Ran(\mathbb A),\mathcal S_{\emptyset})$ is an \textit{unipotent factorization algebra} on $\mathbb A^1$ if conditions above are satisfied. Factorization algebras naturally organize into a category denoted by $FA_{un}(Ran(\mathbb A^1)).$ It is obvious that the construction defined above is functorial i.e. we have a functor:
$$\Lambda\colon LieB_c \longrightarrow {FA}_{un}(Ran(\mathbb A)),\qquad \Lambda\colon \mathfrak g\longmapsto \Lambda(\mathfrak g).$$

\end{remark}

\subsection{Vanishing cycles} In this subsection we will prove that functor $\Lambda$ induces equivalence between conilpotent Lie bialgebras and unipotent factorization algebras on $\mathbb A:$

\begin{Prop} We have the following functor:
\begin{equation}
\Phi_{Ran}\colon  FA_{un}(Ran(\mathbb A))\longrightarrow  LieB_c,\qquad \Phi_{Ran}:=\lim_{Fin} \Phi_{I}.
\end{equation}
Such that $\Phi_{Ran}$ is a quasi inverse a functor to $\Lambda.$
\end{Prop}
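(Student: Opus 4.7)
The plan is to verify that $\Phi_{Ran}$ is well-defined with values in $LieB_c$, and then check separately that both compositions $\Phi_{Ran}\circ \Lambda$ and $\Lambda\circ \Phi_{Ran}$ are naturally isomorphic to the identity. The essential observation is that all the structure of a conilpotent Lie bialgebra is already visible on the $2$- and $3$-point strata of $Ran(\mathbb{A}^1)$, while the higher $\Phi_I$ just record the filtered pieces $\mathfrak{g}_{|I|}$ of the canonical filtration.

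First, I would make the functor $\Phi_{Ran}$ explicit. For a factorization algebra $\EuScript{N}$, the space $\Phi_I(\EuScript{N}_I)$ is, by the Remark, the vector space attached to the minimal-diagonal vertex $[I\twoheadrightarrow [1]]$ in the Khoroshkin quiver description of $\EuScript{N}_I$. A surjection $\pi\colon J\twoheadrightarrow I$ takes the minimal diagonal of $\mathbb{A}^I$ into the minimal diagonal of $\mathbb{A}^J$ under $\Delta^{(\pi)}$, and combining this with the gluing isomorphism $\theta(\pi)\colon \EuScript{N}_I\xrightarrow{\sim}\Delta^{(\pi)!}\EuScript{N}_J$ of Proposition~\ref{ran} produces monomorphisms $\Phi_I(\EuScript{N}_I)\hookrightarrow \Phi_J(\EuScript{N}_J)$ which depend only on $|I|\leq |J|$ and not on $\pi$ (by $\Sigma$-equivariance of factorization data). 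This realises $\{\Phi_I(\EuScript{N}_I)\}_{Fin}$ as an exhaustive filtration of a vector space; I would denote this colimit $\mathfrak{g}(\EuScript{N})$ and interpret $\lim_{Fin}\Phi_I$ in this sense.

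Next, I would extract the Lie bialgebra structure. Applying the construction to $|I|=2$: the factorization isomorphism $c_\pi$ identifies the open-stratum fibre of $\EuScript{N}_2$ with the skew-invariants $(\Phi_1(\EuScript{N}_1)^{\otimes 2})^{-\Sigma_2}$, and the quiver operators furnish maps
\[
A^-\colon (\Phi_1\EuScript{N}_1\otimes\Phi_1\EuScript{N}_1)^{-\Sigma_2}\longleftrightarrow \Phi_2\EuScript{N}_2\colon A^+.
\]
I would define the bracket as the restriction of $A^-$ to $\Phi_1\otimes\Phi_1\subset \Phi_2$ (with antisymmetrisation) and the cobracket as the corresponding component of $A^+$. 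The Jacobi and coJacobi identities are then exactly the first two quiver relations of \cite{Kho} applied to $\EuScript{N}_3$ at the triple of codimension-one strata meeting along the small diagonal, while the $1$-cocycle identity is the mixed $A^+A^-=-A^-A^+$ relation; these were precisely the relations verified in Proposition~\ref{Gluing}, now read in reverse. Conilpotency and the fact that the Lie structure respects the filtration $\mathfrak{g}_k=\Phi_{[k]}(\EuScript{N}_{[k]})$ are immediate from the fact that the operators land in strictly smaller tensor components, while finite dimensionality of each $\mathfrak{g}_k$ follows from holonomicity.

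Finally, for the equivalence, the composition $\Phi_{Ran}\circ\Lambda$ is handled by direct inspection: by construction $\Phi_I(\Lambda(\mathfrak{g})_I)=\mathfrak{g}_{|I|}$, the colimit of this filtration recovers $\mathfrak{g}$, and the induced bracket/cobracket match the originals because the $A^{\pm}$ operators of $\Lambda(\mathfrak{g})$ are literally symmetrisations of $[-,-]$ and $\delta$. The other direction $\Lambda\circ\Phi_{Ran}\xrightarrow{\sim}\mathrm{id}$ is the more delicate step: here one must show that an arbitrary factorization algebra $\EuScript{N}$ is reconstructed from its filtered underlying Lie bialgebra. The strategy is to verify cell-by-cell in the quiver: for any stratum $[\pi]\colon I\twoheadrightarrow [k]$, the iterated factorization isomorphisms $c$ restrict $\EuScript{N}_I$ near $S_{[\pi]}$ to an external product of copies of $\EuScript{N}_{I_j}$, while the gluing $\theta$ along the diagonal $S_{[\pi]}\hookrightarrow \mathbb{A}^I$ identifies this fibre with $\bigotimes_j \mathfrak{g}_{|I_j|}$, symmetrised as required; the $A^\pm$ operators between adjacent strata are then forced by the quiver relations to agree with those of $\Lambda(\mathfrak{g}(\EuScript{N}))$. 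The main obstacle is precisely this reconstruction: one must check that the natural transformations provided by $\theta$ and $c$ assemble coherently over all strata simultaneously, which amounts to a careful bookkeeping of the $\Sigma$-equivariant data and verifying that no exotic quiver relations can appear beyond those already encoded by Jacobi, coJacobi and the cocycle identity on $\mathfrak{g}(\EuScript{N})$.
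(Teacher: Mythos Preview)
Your approach is the same as the paper's: define $\Phi_{Ran}$ via the minimal-diagonal vertex in the quiver description, extract bracket and cobracket from the $A^\pm$ operators combined with the factorization isomorphisms, and read off Jacobi, coJacobi and the cocycle condition from the Khoroshkin relations. The paper is in fact terser than you on the reconstruction step, simply declaring that $\Phi_{Ran}$ is quasi-inverse to $\Lambda$ as ``obvious''.

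There is, however, one genuine imprecision. Your opening claim that ``all the structure of a conilpotent Lie bialgebra is already visible on the $2$- and $3$-point strata'' is false, and it leads you to define the bracket only as a map $\mathfrak g_1\otimes\mathfrak g_1\to\mathfrak g_2$ coming from $\EuScript N_2$. A conilpotent Lie bialgebra is not in general generated by $\mathfrak g_1$, so this single component does not determine $[-,-]$ on all of $\mathfrak g$. What the paper does (and what you need) is to extract, for every decomposition $|I|=|I'|+|I''|$, a component
\[
[-,-]_{|I'|,|I''|}\colon \Phi_{Ran}(\EuScript K)_{|I'|}\otimes\Phi_{Ran}(\EuScript K)_{|I''|}\longrightarrow \Phi_{Ran}(\EuScript K)_{|I|}
\]
from the $A^-$ operator between the small diagonal and the adjacent codimension-one stratum in $\mathbb A^I$, the latter being identified with $\Phi_{I'}\otimes\Phi_{I''}$ via the factorization isomorphism; likewise for $\delta_{|I'|,|I''|}$ via $A^+$. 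The axioms then need the quiver relations in every $\EuScript N_I$, not just $\EuScript N_3$. Once you make this correction the rest of your outline, including the cell-by-cell reconstruction of $\EuScript N$ from $\Lambda(\mathfrak g(\EuScript N))$, goes through. A minor further point: the paper sets $\Phi_{Ran}(\EuScript K)_{|I|}:=\Phi_I(\EuScript K_I)^{\Sigma_{|I|}}$ explicitly, rather than appealing to equivariance; you should do the same so that the filtered pieces genuinely depend only on $|I|$.
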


\begin{proof} Let $\EuScript K\in FA_{un}(Ran(\mathbb A))$ then for every $I\in Fin$ we define a vector space $\Phi_{Ran}(\EuScript K)_{|I|}:=\Phi_I(\EuScript K_I)^{\Sigma_{|I|}}.$  Note that for every $\pi \colon J\twoheadrightarrow I$ we have an inclusion $\Delta_*^{(\pi)}\EuScript K_I \hookrightarrow \EuScript K_J,$ which comes as an adjoint map to an isomorphism from Proposition \ref{ran}. These maps descends to an injection:
$\Phi_{Ran}(\EuScript K)_{|I|}\hookrightarrow \Phi_{Ran}(\EuScript K)_{|J|}.$ Thus we have:
$$\Phi_{Ran}(\EuScript K)_{1}\subset\dots  \subset\Phi_{Ran}(\EuScript K)_{n}\subset\dots \subset \Phi_{Ran}(\EuScript K).$$
Note that $A^+$ operators together with factorization isomorphisms from Proposition \ref{fact} define the following map:
$$
\delta_{|I'|,|I''|}\colon \Phi_{Ran}(\EuScript K)_{|I|}\longrightarrow \Phi_{Ran}(\EuScript K)_{|I'|}\otimes \Phi_{Ran}(\EuScript K)_{|I''|},\qquad {|I'|+|I''|=|I|}.
$$
From the quiver description of a category of unipotent $\EuScript D$-modules, we obtain that $\delta$ satisfies coJacobi identity. Also from the Koszul sign rule for the exterior tensor product $\boxtimes$ of $\EuScript D$-modules we obtain that $\delta$ is skew-symmetric.
\par\medskip
Analogically using $A^-$operators and factorization isomorphisms we define the following map:
$$
[-,-]_{|I'|,|I''|}\colon  \Phi_{Ran}(\EuScript K)_{|I'|}\otimes \Phi_{Ran}(\EuScript K)_{|I''|}\longrightarrow \Phi_{Ran}(\EuScript K)_{|I|},\quad |I'|+|I''|=|I|.
$$
Which is skew-symmetric and satisfies Jacobi identity. Operators $\delta$ and $[-,-]$ are compatible in the sense of Drinfeld ($\delta$ is $1$-cocycle for the bracket $[-,-]$) because of the last condition of quiver description of category of unipotent $\EuScript D$-modules.

Thus we have defined a structure of conilpotent Lie bialgebra on $\Phi_{Ran}(\EuScript K).$ It is obvious that $\Phi_{Ran}$ is a quasi-inverse functor to $\Lambda.$
\end{proof}

\begin{remark} Analogically to Remark \eqref{df} we can define functor $\omega^{dR}$ on a category of $\EuScript D$-modules on a Ran prestack:
\begin{equation}\label{cp}
\omega_{Ran}^{dR} \colon \mathcal M_{un}^{dR}(Ran(\mathbb A),\mathcal S_{\emptyset}) \longrightarrow  Vect,\qquad \omega^{dR}_{Ran}:=\lim_{Fin} \omega^{dR}_{I}.
\end{equation}
Denote by $AlgCB_{c}$ is a category of \textit{conilpotent coPoisson bialgebras}. We have a primitive elements functor $Prim\colon AlgCB_{c}\longrightarrow LieB_c,$ defined by taking the underlying Lie algebra i.e. $Prim(A):=\{x\in A^+\,|\, \overline{\Delta}(x)=0\}.$ One can show that a functor \eqref{cp} makes the following diagram into the commutative:
\begin{equation*}
\begin{diagram}[height=2.4em,width=6em]
FA_{un}(Ran(\mathbb A)) &   \rTo^{\omega_{Ran}^{dR}}   &   AlgCB_{c} &  \\
 & \rdTo_{\sim}^{\Phi_{Ran}}  &  \dTo_ {\sim}^{Prim}  && \\
 &      &  LieB_c    \\
\end{diagram}
\end{equation*}
Note that a primitive elements functor $Prim$ defines an equivalences of categories. This is corollary from the Milnor-Moore equivalence \cite{Drin}.

\end{remark}

\section{Koszul duality}

\subsection{Hopf algebras}

For a bialgebra $A$ we denote by $\Delta\colon A\longrightarrow A\otimes A$ the corresponding comultiplication with the corresponding reduced comultiplication $\overline{\Delta}(x)=\Delta(x)-x\otimes 1-1\otimes x$ and by $m\colon A\otimes A\longrightarrow A$ we denote the corresponding multiplication and $\varepsilon\colon A\longrightarrow \Bbbk$ the counit. By $A^+:=\mathrm{Ker}(\varepsilon)$ we denote an augmentation ideal. An operator $\overline{\Delta}$ defines a coalgebra structure on $A^+.$
Let $A$ be a \textit{conilpotent bialgebra}, that is a bialgebra such that a canonical filtration $A_k:=\{x\in A^+\,|\,\overline{\Delta}^k(x)=0\}$ is exhaustive and $\dim A_k<\infty.$ Note that $m$ and $\overline{\Delta}$ respect a canonical filtration i.e. $m(A_i,A_j)\subset A_{i+j}$ and $\overline{\Delta}(A_k)\subset \summm_{i+j=k} A_i\otimes A_j$ and by $\overline{\Delta}_{ij}$ we denote a component which lies in $A_i\otimes A_j.$ Conilpotent bialgebras form a category $AlgB_c$ in the obvious way which is equivalent to the category of conilpotent Hopf algebras $AlgH_c.$ Further we will not distinguish between conilpotent bialgebras and conilpotent Hopf algebras.

\subsection{Perverse sheaves} With every $I\in Fin$ we associate complex affine space $\mathbb A^I$ (in analytic topology) equipped with a diagonal stratification $\mathcal S_{\emptyset}.$ Let $\mathcal M_{un}^B(\mathbb A^I,\mathcal S_{\emptyset})$ be a category of unipotent perverse sheaves on $\mathbb A^I$ smooth with respect to a diagonal stratification.
\par\medskip
For every $I$ we will also consider $\mathbb A^I_{\mathbb R}$ with a real diagonal stratification denoted by $\mathcal S_{\emptyset,\mathbb R}.$ Real strata of dimension $|T|$ correspond to a surjections $\rho\colon I \twoheadrightarrow T.$ We denote by $i\colon \mathbb A^I_{\mathbb R}\longrightarrow \mathbb A^I$ embedding of $\mathbb A^I_{\mathbb R}$ as $x$-axes. A category of constructible sheaves smooth with respect to $\mathcal S_{\emptyset,\mathbb R}$ will be denoted by $\mathcal Const(\mathbb A_{\mathbb R}^I,\mathcal S_{\emptyset,\mathbb R}).$ In \cite{KaSh} it was proved that a functor $i^!\colon \mathcal M_{un}^B(\mathbb A^I,\mathcal S_{\emptyset})\longrightarrow \mathcal Const(\mathbb A_{\mathbb R}^I,\mathcal S_{\emptyset,\mathbb R})$ is an exact functor. Moreover using this functor \textit{explicit description} of a category  $\mathcal M_{un}^B(\mathbb A^I,\mathcal S_{\emptyset})$ was given in \textit{ibid}. A category of perverse sheaves was identified with a category of double representations with relations of a certain quiver $\Gamma_{I,\mathbb R}$ (which is defined as a quiver $\Gamma_I$\textit{mutatis mutandis} with respect to a real diagonal stratification of $\mathbb A^I_{\mathbb R}.$).

\begin{remark} Denote by $\Delta_{\mathbb R}$ the unique closed minimal stratum of $\mathcal S_{\emptyset, \mathbb R}.$ For every $I$ we define the following functor:
$$
\omega^B_I\colon \mathcal M_{un}^B(\mathbb A^I,\mathcal S_{\emptyset})\longrightarrow Vect^f,\qquad \omega^B_I:=\Gamma(\Delta_{\mathbb R},i^!-).
$$
The equivalence from \textit{ibid} can be though as an instance of \textit{Tannakian duality} with respect to a functor $\omega^B_I.$

\end{remark}

By $Sym^{|I|}(\mathbb A)$ we denote a geometric quotient of $\mathbb A^I$ for the natural symmetric group action. Geometric quotient $Sym^{|I|}(\mathbb A)$ can be naturally identified with a space $Div^{|I|}(\mathbb A)$ of \textit{effective Cartier divisors} on $\mathbb A$ of degree $|I|.$ The diagonal stratification on $\mathbb A^I$ induces a stratification on $Sym^{|I|}(\mathbb A),$ which by the abuse of notation will be denoted by the same symbol $\mathcal S_{\emptyset}.$ From point of view of Cartier divisors elements in $\mathcal S_{\emptyset}$ can be identified with \textit{partitions}. By $\mathcal M^B(Sym^{|I|}(\mathbb A),\mathcal S_{\emptyset})$ we denote a category of perverse sheaves on $Sym^{|I|}(\mathbb A)$ smooth with respect to a diagonal stratification.

\par\medskip

We also consider a space $Sym^n(\mathbb A_{\mathbb R}):=\mathbb A_{\mathbb R}^n/ \Sigma_n.$ This space also has a real diagonal stratification denoted by $\mathcal S_{\emptyset,\mathbb R}.$ Elements of this stratification can be identified with \textit{ordered partitions}. Following \cite{Shuffle} a poset of ordered partitions, equipped with a partial order of reverse inclusion of subsets will be denoted by $\mathbf 2^{n-1}.$  The corresponding category of combinatorial (constructible) sheaves on $Sym^n(\mathbb A_{\mathbb R})$ will be denoted by $\mathcal Const(Sym^n(\mathbb A_{\mathbb R}),\mathcal S_{\emptyset,\mathbb R}).$ By $Rep(\mathbf 2^{n-1})$ we will denote a category of functors from $\mathbf 2^{n-1}$ to the category $Vect^f.$ Note \cite{KaSh} that a category $\mathcal Const(Sym^n(\mathbb A_{\mathbb R}))$ is equivalent to a category $Rep(\mathbf 2^{n-1}).$

\par\medskip
For every ordered partition $(n_1,\dots,n_k)$ of $n$ we have the corresponding embedding of strata: $\Delta^{(n_1,\dots,n_k)}\colon Sym^{k}(\mathbb A_{\mathbb R})\rightarrow Sym^{n}(\mathbb A_{\mathbb R})$ and the corresponding pair of adjoint functors:
\begin{equation}\label{ppq}
\Delta^{(n_1,\dots,n_k)}_*\colon \mathcal Const(Sym^{k}(\mathbb A_{\mathbb R}),\mathcal S_{\emptyset,\mathbb R})\longleftrightarrow \mathcal Const(Sym^{n}(\mathbb A_{\mathbb R}),\mathcal S_{\emptyset,\mathbb R})\colon \Delta^{(n_1,\dots,n_k)!}
\end{equation}
between constructible sheaves. We have the corresponding functors between combinatorial data:
\begin{equation}\label{q2}
\Delta^{(n_1,\dots,n_k)}_*\colon Rep(\mathbf 2^{k-1})\longleftrightarrow Rep(\mathbf 2^{n-1})\colon \Delta^{(n_1,\dots,n_k)!}
\end{equation}
Note that we have the following commutative square:
\begin{equation}\label{d1}
\begin{diagram}[height=2.2em,width=4.5em]
\mathbb A^I_{\mathbb R} &   \rTo^{i}   &   \mathbb A^I  &  \\
\dTo_{p_{\mathbb R,I}} & &  \dTo_ {p_I}  && \\
Sym^{|I|}(\mathbb A_{\mathbb R}) &   \rTo^{i_{\sigma}}   &  Sym^{|I|}(\mathbb A)    \\
\end{diagram}
\end{equation}
Where $p_I$ and $p_{\mathbb R,I}$ are projection maps and $i_{\sigma}$ is the induced map between quotient. Note that map $p_I$ is finite flat and surjective.
\par\medskip
Let $\pi\colon J\twoheadrightarrow I$ be a surjection with a corresponding ordered partition $|\pi|:=(|\pi^{-1}(i)|)_I.$ Then we have the following commutative square:
\begin{equation}\label{d2}
\begin{diagram}[height=2.4em,width=5em]
\mathbb A^I_{\mathbb R} &   \rTo^{\Delta^{(\pi)}}   &   \mathbb A^I_{\mathbb R}  &  \\
\dTo_{p_I} & &  \dTo_ {p_J}  && \\
Sym^{|I|}(\mathbb A_{\mathbb R}) &   \rTo^{\Delta^{|\pi|}}   &  Sym^{|J|}(\mathbb A_{\mathbb R})    \\
\end{diagram}
\end{equation}
It is easy to see that this square is fibered.

\subsection{Factorizable sheaf $\Omega(A)$} In this subsection out of a conilpotent bialgebra we built a certain factorizable sheaf.

\begin{Prop} With every conilpotent bialgebra $A$ and for every $n$ we associate perverse sheaf $\EuScript E(A)_n$ on $Sym^n(\mathbb A.)$

\end{Prop}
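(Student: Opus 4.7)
The plan is to mimic the construction of Proposition \ref{Gluing}, replacing $\EuScript D$-modules by perverse sheaves and the Lie bialgebra $(\mathfrak g,[-,-],\delta)$ by the bialgebra $(A,m,\overline{\Delta})$. First I would invoke the combinatorial description of $\mathcal M^B(Sym^n(\mathbb A),\mathcal S_{\emptyset})$ following \cite{KaSh,Shuffle}: after applying the exact functor $i_{\sigma}^!$ and using the equivalence $\mathcal Const(Sym^n(\mathbb A_{\mathbb R}),\mathcal S_{\emptyset,\mathbb R})\simeq Rep(\mathbf 2^{n-1})$, an unipotent perverse sheaf becomes a functor on the poset $\mathbf 2^{n-1}$ of ordered partitions of $n$, equipped with adjoint pairs of transition maps $(A^-,A^+)$ between adjacent ordered partitions (obtained by merging, resp.\ splitting two consecutive blocks), satisfying a finite list of quiver relations—the perverse-sheaf analogues of the Khoroshkin--Varchenko relations used earlier.

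Given $A\in AlgB_c$, for each ordered partition $\lambda=(n_1,\dots,n_k)$ of $n$ I would assign the vector space $A_{\lambda}:=A_{n_1}\otimes\cdots\otimes A_{n_k}$ built from the canonical filtration, which is finite-dimensional by conilpotency. When two adjacent ordered partitions differ by merging the $i$-th and $(i{+}1)$-st blocks, I define the merging arrow $A^-$ to be $\mathrm{id}^{\otimes(i-1)}\otimes m_{n_i,n_{i+1}}\otimes\mathrm{id}^{\otimes(k-i-1)}$, where $m_{p,q}\colon A_p\otimes A_q\to A_{p+q}$ is the graded component of the multiplication; and the splitting arrow $A^+$ to be $\mathrm{id}^{\otimes(i-1)}\otimes\overline{\Delta}_{n_i,n_{i+1}}\otimes\mathrm{id}^{\otimes(k-i-1)}$. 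For $n=2$ this recovers the toy model $\overline{\Delta}\colon A_2\leftrightarrow A_1\otimes A_1\colon m$ from the Introduction.

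Next I would verify the quiver relations of \cite{KaSh,Shuffle}. Commutativity of non-overlapping mergings or splittings is automatic from the tensor-product form of the operators. The relation for three consecutive blocks merged (resp.\ split) in two different orders reduces to associativity of $m$ (resp.\ coassociativity of $\overline{\Delta}$). The crucial mixed relation, controlling a splitting performed before a merging versus the opposite order on overlapping blocks, is precisely the bialgebra compatibility asserting that $\Delta$ is a morphism of algebras (equivalently, $m$ is a morphism of coalgebras), restricted to the appropriate graded pieces of the canonical filtration. Nilpotency of the monodromy around any deep stratum follows at once from conilpotency: iterated $A^+$ along a chain that completely splits a block of size $s$ factors through $\overline{\Delta}^{\,s}$, which vanishes on $A_s$.

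The main obstacle I anticipate is the bookkeeping required to match the Picard--Lefschetz-type identity in the quiver of \cite{KaSh,Shuffle} with the bialgebra axiom in the correct signs and correct graded components, especially because the poset $\mathbf 2^{n-1}$ carries nontrivial adjacency data whenever blocks of sizes greater than $1$ are split in several internal positions; however no algebraic input beyond the bialgebra axioms should be needed. Once the quiver object is produced, the equivalence of categories from \textit{ibid.}\ yields the desired perverse sheaf $\EuScript E(A)_n\in\mathcal M^B(Sym^n(\mathbb A),\mathcal S_{\emptyset})$, and functoriality in $A$ is immediate from the functoriality of the construction at the level of vector-space assignments.
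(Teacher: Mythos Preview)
Your approach is valid but takes a genuinely different route from the paper. The paper does not construct $\EuScript E(A)_n$ by producing a double representation of the quiver $\mathbf 2^{n-1}$ and invoking an equivalence; instead it follows \cite{Shuffle} and builds an explicit \emph{Cousin complex} of sheaves on $Sym^n(\mathbb A)$. Concretely, the imaginary-part map $\mathfrak I\colon Sym^n(\mathbb A)\to Sym^n(\mathbb A_{\mathbb R})$ pulls the real diagonal stratification back to an ``imaginary stratification'' with strata $S^{\mathfrak I}_\alpha$, $\alpha\in\mathbf 2^{n-1}$; on each stratum one places the locally constant sheaf $\widetilde{\EuScript E}^\alpha(A)$ with stalk $A_{\lambda_1}\otimes\cdots\otimes A_{\lambda_k}$ at a point of multiplicities $(\lambda_i)$ and generalization maps coming from $\overline\Delta$; one sets $\EuScript E^\alpha(A):=Rj_{\alpha*}\widetilde{\EuScript E}^\alpha(A)$ and assembles these into a complex whose differentials are built from the multiplication $m$. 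Perversity of the resulting complex is not verified by hand but imported wholesale from Proposition~4.2.16 of \cite{Shuffle}.

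The trade-off: your quiver route keeps the parallel with Proposition~\ref{Gluing} visible and makes explicit which bialgebra identity accounts for which relation, but it obliges you to spell out and check the full list of double-quiver relations for $\mathcal M^B(Sym^n(\mathbb A),\mathcal S_{\emptyset})$ at arbitrary $n$---something the paper itself notes is elementary only ``for small values of~$n$''. The Cousin approach replaces that combinatorial bookkeeping with a geometric construction whose hardest step (perversity) is already established in \cite{Shuffle}, at the price of hiding the symmetry with the $\EuScript D$-module side. Your anticipated obstacle, matching the Picard--Lefschetz-type identity to the bialgebra compatibility in the correct form, is exactly the work that the paper's route circumvents by citation.
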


\begin{proof} We essentially follow a construction from \cite{Shuffle} which goes \textit{mutatis mutandis} in a conilpotent case. This construction consists of the following steps:

\par\medskip

We have a natural map $Sym^n(\mathbb A)\longrightarrow Sym^n(\mathbb A_{\mathbb R}),$ which sends a complex tuple to its imaginary part $\mathfrak I\colon Sym^n(\mathbb A)\longrightarrow Sym^n(\mathbb A_{\mathbb R}).$ Following \textit{ibid} we will call a preimage of a real diagonal stratification an \textit{imaginary stratification}. We denote by $S^{\mathfrak J}_{\alpha}:=\mathfrak I^{-1}(K_{\alpha}),$ and $j_{\alpha}\colon S^{\mathfrak J}_{\alpha}\hookrightarrow Sym^n(\mathbb A).$ Then for every $n$ and $\alpha\in \mathbf 2^{n-1}$ we have a sheaf $\widetilde{\EuScript E}^{\alpha}(A)$ on $S_{\alpha}^{\mathfrak I}.$ Stalk of this sheaf at the point $x=\Sigma \lambda_i x_i$ is identified with $A_{\lambda_1}\otimes \dots \otimes A_{\lambda_k}$ and generalization maps are given by the appropriate component of $\overline{\Delta}.$ We omit proof and refer reader to \textit{ibid}. We call $\EuScript E^{\alpha}(A):=Rj_{\alpha*}\widetilde{\EuScript E}^{\alpha}(A)$ the $\alpha$th \textit{Cousin sheaf} of $A.$

\par\medskip

For $\alpha,\beta\in \mathbf 2^{n-1}$ we define the morphism of sheaves on $S_{\beta}^{\mathfrak I}:$
$$\delta_{\alpha,\beta}'\colon j^*_{\beta}\EuScript E^{\alpha}(A)=j^*_{\beta}Rj_{\alpha*}\widetilde{\EuScript E}^{\alpha}(A)\longrightarrow \widetilde{\EuScript E}^{\beta}(A),$$
using multiplication in a bialgebra $A,$ we again refer to \textit{ibid} for details. By an adjunction this map gives a morphism of sheaves on $Sym^n(\mathbb A):$
$$\delta_{\alpha,\beta}\colon \EuScript E^{\alpha}(A)\longrightarrow \EuScript E^{\beta}(A).$$ Thus have the following complex of sheaves:
$$
\EuScript E_n^{\hdot}(A)=\{\EuScript E^{1^n}(A)\rightarrow \summm_{l(\alpha)=n-1} \EuScript E^{\alpha}(A)\rightarrow \dots \rightarrow \EuScript E^{n}(A)\},
$$
where $l(\alpha)$ is a length of an ordered partition.
\par\medskip
The last step is to show that  $\EuScript E_n^{\hdot}(A)$ is a perverse sheaf on $Sym^n(\mathbb A),$ constructible with respect to a diagonal stratification $\mathcal S_{diag}.$ This is Proposition $4.2.16$ from \textit{ibid}.

\end{proof}

For every $I$ we put $\Omega(A)_I:=p_I^!\EuScript E(A)_{|I|}.$ Since $p$ is a finite flat and surjective map a sheaf $\Omega(A)_I$ will be perverse. Note that because $p^!$ is a submersion we have $p^!\cong p^*.$

\begin{Prop}
For every $\pi\colon J\twoheadrightarrow I$ conilpotent filtration induces natural isomorphism:
$$\varphi(\pi)\colon \Omega(A)_I \overset{\sim}{\longrightarrow}  \Delta^{(\pi)!}\Omega(A)_J,$$
compatible with compositions of surjections.

\end{Prop}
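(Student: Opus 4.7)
The plan is to reduce the statement to an assertion on $Sym^{|I|}(\mathbb A)$ and then run an argument parallel to the proof of Proposition \ref{ran}, with the role of the conilpotent filtration of a Lie bialgebra played here by the canonical filtration $A_k=\{x\in A^+\mid\overline{\Delta}^k(x)=0\}$ of the bialgebra.

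First I would use the Cartesian square \eqref{d2}. Since $p_J$ is finite flat and \eqref{d2} is fibered, base change for the $!$-pullback gives a canonical isomorphism $\Delta^{(\pi)!}\,p_J^!\cong p_I^!\,\Delta^{|\pi|!}$. Combined with the definition $\Omega(A)_{\bullet}=p_{\bullet}^!\,\EuScript E(A)_{|\bullet|}$, this reduces the problem to producing a natural isomorphism
\[
\EuScript E(A)_{|I|}\ \overset{\sim}{\longrightarrow}\ \Delta^{|\pi|!}\,\EuScript E(A)_{|J|}
\]
on $Sym^{|I|}(\mathbb A)$, compatible with compositions of $\pi$'s.

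Next I would pass to the combinatorial/Cousin description used in the construction of $\EuScript E(A)_n$. Under this description, perverse sheaves on $Sym^{n}(\mathbb A)$ smooth along $\mathcal S_{\emptyset}$ are encoded by representations of the poset $\mathbf 2^{n-1}$; stalks on an imaginary stratum of type $(\mu_1,\dots,\mu_s)$ are identified with $A_{\mu_1}\otimes\cdots\otimes A_{\mu_s}$, generalization maps are built from (iterated) reduced comultiplications $\overline{\Delta}$, and specialization maps from the multiplication $m$. The functor $\Delta^{|\pi|!}$ then admits a combinatorial description strictly parallel to the quiver $!$-pullback of Proposition \ref{pull}: on each stratum $\sigma$ of $Sym^{|I|}(\mathbb A)$ its value is the intersection, over admissible paths in the $\mathbf 2^{|J|-1}$-quiver exiting $\Delta^{|\pi|}(Sym^{|I|}(\mathbb A))$ transversally, of the kernels of the corresponding compositions of generalization maps.

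On the open stratum, whose image lies in the stratum of $Sym^{|J|}(\mathbb A)$ of type $(n_1,\dots,n_{|I|})$ with $n_i=|\pi^{-1}(i)|$, the ambient stalk of $\EuScript E(A)_{|J|}$ is $A_{n_1}\otimes\cdots\otimes A_{n_{|I|}}$. The transverse generalization maps act on the $i$-th factor as iterations of the reduced comultiplication $\overline{\Delta}^{n_i}$, and the intersection of their kernels is, by the very definition of the canonical filtration, exactly $A_{n_i}$ on that factor. This gives the correct stalk $A_{n_1}\otimes\cdots\otimes A_{n_{|I|}}$ of $\EuScript E(A)_{|I|}$ on the open stratum. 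One then runs the same kernel analysis stratum by stratum of $Sym^{|I|}(\mathbb A)$, and checks that the identifications intertwine the generalization maps (both given by components of $\overline{\Delta}$ on each tensor factor) and the specialization maps (both given by $m$); this is where the description from \cite{Shuffle} of the transitions in the Cousin complex is used directly, paralleling the verification of the conditions of \cite{Kho} in the proof of Proposition \ref{Gluing}. Compatibility with compositions of surjections follows as in Proposition \ref{ran}: the Cartesian squares \eqref{d2} compose, the base-change isomorphisms for $!$-pullback are transitive, and on kernels the coassociativity of $\overline{\Delta}$ gives the nested identification $A_{n_i}\cap\bigcap_j A_{n_i n_{ij}'}=A_{\sum_j n_i n_{ij}'}$ needed to match $\varphi(\pi'\circ\pi)$ with $\varphi(\pi)\circ\Delta^{(\pi)!}\varphi(\pi')$.

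The main obstacle is the third step: establishing cleanly, in the Betti setting, a quiver $!$-pullback formula analogous to Proposition \ref{pull} for the diagonal embeddings $\Delta^{|\pi|}\colon Sym^{|I|}(\mathbb A)\hookrightarrow Sym^{|J|}(\mathbb A)$, and identifying the resulting kernel of iterated $\overline{\Delta}$'s with the filtration component $A_{n_i}$. Everything else (base change, compatibility with compositions, compatibility of $\varphi$ with the $\theta$ of Proposition \ref{ran} under $RH$) is then formal.
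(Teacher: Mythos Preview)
Your overall strategy is sound and parallels the paper's, but the paper takes a cleaner route around precisely the obstacle you identify at the end.

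You reduce first to $Sym^{|I|}(\mathbb A)$ by base change along the (complex analog of) square~\eqref{d2} and then attempt to prove $\EuScript E(A)_{|I|}\cong \Delta^{|\pi|!}\EuScript E(A)_{|J|}$ directly in the Cousin picture, which requires you to develop a $!$-pullback formula for perverse sheaves on $Sym^n(\mathbb A)$ analogous to Proposition~\ref{pull}. The paper avoids this. Instead it first applies the hyperbolic restriction $i^!_{\sigma}$ to land in $\mathcal Const(Sym^n(\mathbb A_{\mathbb R}),\mathcal S_{\emptyset,\mathbb R})\simeq Rep(\mathbf 2^{n-1})$, where the $!$-pullback~\eqref{q2} is transparent and the identification with the conilpotent filtration of $A$ is immediate. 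It then lifts this to an isomorphism on $i^!\Omega(A)$ via diagram chasing with squares~\eqref{d1} and~\eqref{d2}, separately records a morphism on $i^*_{\sigma}$ coming from the multiplication (this is the $i^*$ half of the double-quiver data), and finally invokes the Kapranov--Schechtman equivalence \cite{KaSh} to conclude that the isomorphism on $i^!$ extends to one of perverse sheaves. In short, the paper linearizes the problem by passing to the real skeleton before analyzing the $!$-pullback; your approach would work but forces you to redo the content of Proposition~\ref{pull} in the Betti/Cousin setting rather than using the off-the-shelf description from \cite{KaSh}.

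One imprecision worth flagging: you write that perverse sheaves on $Sym^n(\mathbb A)$ are ``encoded by representations of the poset $\mathbf 2^{n-1}$''. That is the description of $\mathcal Const(Sym^n(\mathbb A_{\mathbb R}))$, i.e.\ of $i^!_{\sigma}$-data alone. The perverse category needs the \emph{double} quiver (both generalization and specialization maps with relations), which is exactly why the paper treats the $i^*_{\sigma}$ side separately via~\eqref{phi}. Your sketch does mention specialization maps from $m$, so you may have this in mind, but as written the reduction to $Rep(\mathbf 2^{n-1})$ is not enough to recover the perverse isomorphism.
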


\begin{proof} By the definition of a quiver $!$-pullback \eqref{q2} and the definition of a conilpotent filtration for every surjection $\pi \colon J \twoheadrightarrow I$ we have an isomorphism:
$$
i^!_{\sigma}\EuScript E(A)_{|I|}\overset{\sim}{\longrightarrow} \Delta^{|\pi|!}i^!_{\sigma}\EuScript E(A)_{|J|}
$$
Moreover this map lifts to an isomorphism:
\begin{equation}\label{i1}
\phi(\pi)\colon i^!\Omega(A)_{|I|}\overset{\sim}{\longrightarrow} \Delta^{(\pi)!}i^!\Omega(A)_{|J|},
\end{equation}
compatible with compositions of surjections. This follows from the standard diagram chasing:
\begin{align}\label{123}
  \Delta^{(\pi)!}i^!\Omega(A)_{|J|}:=\Delta^{(\pi)!}i^!p_J^!\EuScript E(A)_{|J|}&\cong  \Delta^{(\pi)!} p_J^! i^!_{\sigma}\EuScript E(A),\quad \mbox{(square \eqref{d1})}, \\
  \Delta^{(\pi)!} p_J^! i^!_{\sigma}\EuScript E(A)&\cong p^!_I\Delta^{|\pi|!} i^!_{\sigma}\EuScript E(A),\quad \mbox{(square \eqref{d2}).}
\end{align}
For every $\pi \colon J\twoheadrightarrow I$ we also have the following morphism:
\begin{equation}\label{phi}
\Delta^{|\pi|}_*i^*_{\sigma}\EuScript E(A)_{|I|}\longrightarrow i^*_{\sigma}\EuScript E(A)_{|J|}.
\end{equation}
This morphism follows from the fact that a multiplication respects the filtration on $A.$ More precisely a functor $i^*_{\sigma}$ sends perverse sheaf $\EuScript E(A)$ to a complex of sheaves (in a $t$-structure dual to the standard one). The later complex is equivalent to a datum of cosheaf on $Sym^{n}(\mathbb A_{\mathbb R})$ associated with an algebra underlying $A$ (See \cite{Shuffle}). Then map \eqref{phi} lifts to a morphism:
$$
\Delta^{(\pi)}_*i^*\Omega(A)_{|I|}\longrightarrow i^*\Omega(A)_{|J|}
$$
Once again this follows from the standard diagram chasing:
\begin{align*}\label{123}
\Delta^{(\pi)}_*i^*\Omega(A)_{|I|}:=\Delta^{(\pi)}_*i^*p^!_I\EuScript E(A)&\cong \Delta^{(\pi)}_*i^*p_I^*\EuScript E(A),\quad \mbox{($p_I$ is submersion)},\\
\Delta^{(\pi)}_*i^*p_I^*\EuScript E(A)&\cong \Delta^{(\pi)}_* p^*_{\mathbb R,I}i^*_{\sigma}\EuScript E(A),\quad \mbox{(square \eqref{d1})}\\
\Delta^{(\pi)}_* p^*_{\mathbb R,I}i^*_{\sigma}\EuScript E(A)&\cong p^*_{\mathbb R,J}\Delta^{|\pi|}_* i^*_{\sigma}\EuScript E(A),\quad \mbox{(base change \eqref{d2})}.
\end{align*}
Hence from the quiver description of a category $\mathcal M^B(\mathbb A^I,\mathcal S_{\emptyset})$ from \cite{KaSh} isomorphism \eqref{i1} extends to an isomorphism of perverse sheaves.

\end{proof}

Note that for every $I\in Fin$ sheaf $\Omega(A)_I$ belongs to the subcategory of unipotent perverse sheaves.

\begin{remark} Just like in the case of $\EuScript D$-modules it is natural to call $\Omega(A):=\{\Omega(A)_I,\phi(\pi)\}_{Fin^{\circ}}$ an \textit{unipotent perverse on Ran space} of $\mathbb A^1.$ These objects with obvious morphisms form a category denoted $\mathcal M_{un}^{B}(Ran(\mathbb A^1)).$

\end{remark}

\begin{Prop} Unipotent perverse sheaf $\Omega(A)$ over $Ran(\mathbb A)$ has the following additional structure:

\begin{itemize}
  \item For every surjection $\pi \colon J\twoheadrightarrow I,$ bialgebra structure induces an isomorphism:
$$v_{\pi}\colon j^{(\pi)*}\Omega(A)_J \overset{\sim}{\longrightarrow} j^{(\pi)*}\boxtimes_{i\in I} \Omega(A)_{J_i}.$$
  \item Compatibilities between $v$'s and compositions of surjections.
  \item Compatibilities between maps $\phi$ and $v.$
\end{itemize}

\end{Prop}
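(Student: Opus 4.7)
The plan is to imitate the proof of Proposition \ref{fact} but in the perverse-sheaf setting, transferring the construction through the Cousin realization of $\Omega(A)$. Using the commutative square \eqref{d1} together with the fact that $p_I$ is finite flat and surjective (so $p_I^! \cong p_I^*$ up to a shift) and using the base-change type square \eqref{d2}, it suffices to construct the factorization isomorphism at the level of Cousin perverse sheaves $\EuScript E(A)_{|J|}$ on $Sym^{|J|}(\mathbb A)$, and then pull back to $\mathbb A^J$ via $p_J$. More precisely, over the open stratum $U^{|\pi|} \subset Sym^{|J|}(\mathbb A)$ parametrizing divisors whose points split into clusters of cardinalities $|J_i|$ indexed by $I$, the space factors locally as a product of $Sym^{|J_i|}(\mathbb A)$'s, and the open set $U^{(\pi)} \subset \mathbb A^J$ is the preimage of $U^{|\pi|}$ under $p_J$.

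Next, I would work on the combinatorial side using the exact functor $i^!_\sigma$ from \cite{KaSh} to translate the problem to a statement about constructible sheaves on $Sym^n(\mathbb A_{\mathbb R})$, i.e., functors on the poset $\mathbf 2^{n-1}$. For an ordered partition $\alpha = (n_1,\dots,n_k)$ refining the partition coming from $|\pi|$ (call these $\pi$-admissible), the stalk of the Cousin sheaf $\widetilde{\EuScript E}^\alpha(A)$ at a point $x = \sum \lambda_i x_i$ lying over $U^{|\pi|}$ is $A_{\lambda_1} \otimes \cdots \otimes A_{\lambda_k}$, and this tensor product visibly regroups as $\otimes_{i \in I}(\otimes_{j \in \alpha_i} A_{\lambda_j})$ where $\alpha_i$ is the induced ordered subpartition of $J_i$. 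For non-$\pi$-admissible $\alpha$ the stalk is again a tensor product, but now restricted to $U^{|\pi|}$ it also matches the corresponding product of strata in $\boxtimes_I Sym^{|J_i|}(\mathbb A)$. This yields the desired isomorphism $v_\pi$ of complexes of sheaves on the open locus, which then lifts to the perverse level by Proposition $4.2.16$ of \cite{Shuffle}.

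The key verification is compatibility of these splittings with the generalization maps of $\widetilde{\EuScript E}^\alpha(A)$ (governed by components of $\overline{\Delta}$) and with the Cousin differentials $\delta_{\alpha,\beta}$ (governed by components of $m$). Both maps respect the conilpotent filtration $A_k$ and act within the factors $A_{\lambda_j}$ indexed by a single $i \in I$ (since clusters in different $\pi$-fibres never collide on $U^{(\pi)}$), so the product decomposition is preserved by both types of maps. Compatibility with compositions of surjections $K \twoheadrightarrow J \twoheadrightarrow I$ follows from associativity of $m$ and coassociativity of $\overline{\Delta}$, exactly as in Proposition \ref{fact}. Compatibility between $v_\pi$ and the maps $\varphi(\pi)$ from the previous proposition follows because both $\varphi$ and $v$ are ultimately induced from the two structure maps $\overline{\Delta}$ and $m$ of the same bialgebra $A$, which are compatible via the bialgebra axiom.

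The main obstacle will be bookkeeping at the level of the whole Cousin complex rather than its individual summands: one must check that the product decomposition is functorial in $\alpha \in \mathbf 2^{n-1}$ so as to give a morphism of complexes, not just term-by-term isomorphisms. This is essentially where one uses that multiplication and comultiplication in $A$ are mutually compatible; once this diagrammatic check is carried out, passage to the perverse sheaf level and then to $\mathbb A^J$ via $p_J^!$ is formal.
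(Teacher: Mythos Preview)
Your proposal is correct and follows essentially the same route as the paper. Both arguments reduce the factorization to the level of the Cousin perverse sheaves $\EuScript E(A)_n$ on $Sym^n(\mathbb A)$ and then pull back along $p_J$ via diagram chasing; the paper packages the first step by introducing the addition map $a_{(n_1,\dots,n_k)}\colon \prod_i Sym^{n_i}(\mathbb A)\to Sym^n(\mathbb A)$ and citing \cite{Shuffle} directly for the isomorphism $j^{*}a^{*}\EuScript E(A)_n\cong j^{*}\boxtimes_i \EuScript E(A)_{n_i}$ on the disjoint locus, whereas you unpack this same step by analysing stalks and differentials of the Cousin complex by hand.
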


\begin{proof} For every $n$ and an ordered partition $(n_1,\dots,n_k)$ of $n$ we have an \textit{addition map}: $$a_{(n_1,\dots,n_k)}\colon Sym^{n_1}(\mathbb A)\times \dots \times Sym^{n_k}(\mathbb A) \longrightarrow Sym^{n}(\mathbb A).$$ Denote by $Sym^{(n_1,\dots,n_k)}(\mathbb A)_{disj} \hookrightarrow Sym^{n_1}(\mathbb A)\times \dots \times Sym^{n_k}(\mathbb A)$ a subspace which consists $k$-tuples of divisors with a  disjoint support. Then following \cite{Shuffle} one can show that we have natural isomorphisms:
$$
v_{(n_1,\dots,n_k)}\colon j^{(n_1,\dots,n_k)*}a^*_{(n_1,\dots,n_k)}\EuScript E(A)_n \overset{\sim}{\longrightarrow}j^{(n_1,\dots,n_k)*}\EuScript E(A)_{n_1}\boxtimes \dots \boxtimes \EuScript E(A)_{n_1}
$$
which is associative in a natural case.
\par\medskip
For very $\pi\colon J \twoheadrightarrow I$ we have the following commutative diagram:
\begin{equation}
\begin{diagram}[height=2.4em,width=5em]
U^{(\pi)} &   \rTo^{j^{(\pi)}}   &   \times_{i\in I}\mathbb A^{J_i}   &  \rTo^{id}  & \mathbb A^J   \\
\dTo_{p_{disj}} & &  \dTo_ {\times_I(p_{J_i})}  &&  \dTo_ {p_J} &&\\
Sym^{|\pi|}(\mathbb A)_{disj} &   \rTo^{j^{|\pi|}}   &  \times_I Sym^{|\pi^{-1}(i)|}(\mathbb A)  &  \rTo^{a_{|\pi|}}  & Sym^{|J|}(\mathbb A)\\
\end{diagram}
\end{equation}
Then a standard diagram chasing gives us:

\begin{align}
j^{(\pi)*}\Omega(A)_J\cong j^{(\pi)*}p^*_J \EuScript E(A)_{|J|}&\cong p_{disj}^*j^{|\pi|*}a_{|\pi|}^*\EuScript E(A)_{|J|},\\
 p_{disj}^*j^{|\pi|*}a_{|\pi|}^*\EuScript E(A)_{|J|}&\cong p_{disj}^*j^{|\pi|}\boxtimes_I \EuScript E(A)_{|\pi^{-1}(i)|}\quad \mbox{(morphism $c_{|\pi|}$)},\\
p_{disj}^*j^{|\pi|*}\boxtimes_I \EuScript E(A)_{|\pi^{-1}(i)|}&\cong p_{disj}^*j^{(\pi)*}\boxtimes_I p^*_{J_i}\EuScript E(A)_{|\pi^{-1}(i)|},\\
 &\cong j^{(\pi)*}\boxtimes_{i\in I} \Omega(A)_{J_i}.
\end{align}

Compatibilities between $v_{\pi}$ and $\phi(\pi)$ are obvious ones.
\end{proof}

\begin{remark}

We say that $\EuScript K$ in $\mathcal M_{un}^{B}(Ran(\mathbb A),\mathcal S_{\emptyset})$ is an \textit{unipotent factorizable sheaf} on $\mathbb A^1$ if conditions above are satisfied. The corresponding category will be denoted by $FS_{un}(Ran(\mathbb A^1)).$ It obvious that we have a functor:
$$\Omega\colon AlgB_c \longrightarrow {FS}_{un}(Ran(\mathbb A)),\quad \Omega\colon A\longmapsto \Omega(A).$$

\end{remark}

\subsection{Hyperbolic stalk} In this subsection we will prove that the functor $\Omega$ is an equivalence.

\begin{Prop} We have a functor: \footnote{Strictly speaking functor $\omega_{Ran}^B$ takes values in the category of bialgebras without a unit, but since the latter category is equivalent to $AlgB_c$ we will not distinguish between them.}
\begin{equation}
\omega_{Ran}^B \colon FS_{un}(Ran(\mathbb A))\longrightarrow  AlgB_c,\qquad \omega_{Ran}^B:=\lim_{Fin} \omega_I^B.
\end{equation}
which is the quasi inverse to the functor $\Omega.$
\end{Prop}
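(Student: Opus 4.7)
The strategy mirrors the proof for $\Phi_{Ran}$ in the Quillen duality section. Given a factorizable sheaf $\EuScript K \in FS_{un}(Ran(\mathbb A))$, I would first set $A_n := \omega_I^B(\EuScript K_I)^{\Sigma_{|I|}}$ for any finite set $I$ with $|I|=n$; this is independent of $I$ by equivariance. The factorization isomorphisms $\varphi(\pi) \colon \EuScript K_I \overset{\sim}{\to} \Delta^{(\pi)!}\EuScript K_J$ for $\pi \colon J \twoheadrightarrow I$ give by adjunction inclusions $\Delta^{(\pi)}_{*}\EuScript K_I \hookrightarrow \EuScript K_J$, which descend to monomorphisms $A_{n} \hookrightarrow A_{n+k}$ after passing to symmetric invariants. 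One then defines $A := \bigcup_n A_n$, an exhaustive filtration with finite dimensional pieces.

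The comultiplication arises from the factorization structure in exactly the way the cobracket did in the $\EuScript D$-module setting: given an ordered partition $(p,q)$ of $n$, the isomorphism $v_{(p,q)}$ restricted to the locus where the two divisors separate together with the inclusion of this open stratum gives a map $A_{n} \to A_p \otimes A_q$, whose components assemble into a reduced comultiplication $\overline{\Delta}$ respecting the filtration. Coassociativity is a direct consequence of the compatibility of the $v_{\pi}$'s with compositions of surjections. The multiplication comes from the Cousin description of perverse sheaves used in the construction of $\Omega$: the generalization maps of the sheaf $\widetilde{\EuScript E}^{\alpha}$ along the poset $\mathbf{2}^{n-1}$ induce, after applying $\omega_I^B$, linear maps $A_p \otimes A_q \to A_{p+q}$. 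These are precisely the stalk-to-stalk maps of the hyperbolic stalk diagram from \cite{KaSh}, and associativity follows from the associativity encoded in the Cousin complex.

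The main obstacle, as in the Lie bialgebra case, is verifying the bialgebra compatibility between $m$ and $\overline{\Delta}$. Here one uses the compatibility between the maps $\varphi(\pi)$ (which furnish the comultiplication) and the maps $v_{\pi}$ (which furnish, after combining with Cousin generalization, the multiplication); this compatibility is exactly the third bullet of the previous proposition. Translated through the quiver description of $\mathcal M_{un}^B(\mathbb A^I, \mathcal S_{\emptyset})$ from \cite{KaSh}, it becomes the pentagon-type relation that in the Koszul-dual picture encodes the axiom $\overline{\Delta}\circ m = (m \otimes m)\circ \tau \circ (\overline{\Delta}\otimes \overline{\Delta})$ on the appropriate filtered pieces.

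Finally, to show that $\omega_{Ran}^B$ is quasi-inverse to $\Omega$, I would check both composites directly on generators. The identity $\omega_{Ran}^B \circ \Omega \cong \mathrm{id}_{AlgB_c}$ is immediate from the construction of $\EuScript E(A)_n$: its hyperbolic stalk along the deepest real stratum, by design, returns $A_n$, and the induced multiplication and comultiplication recover those of $A$ by the definition of the Cousin sheaves and the factorization isomorphisms. For the other direction $\Omega \circ \omega_{Ran}^B \cong \mathrm{id}_{FS_{un}(Ran(\mathbb A))}$, one uses the Kapranov--Schechtman quiver description of $\mathcal M_{un}^B(\mathbb A^I, \mathcal S_{\emptyset})$: a unipotent factorizable sheaf is reconstructed from its hyperbolic stalks together with the generalization and specialization maps, which are precisely the datum encoded by the bialgebra $\omega_{Ran}^B(\EuScript K)$ together with its factorization structure.
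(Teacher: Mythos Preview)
Your overall strategy---define the filtered pieces $A_n=\omega^B_I(\EuScript K_I)^{\Sigma_{|I|}}$, assemble multiplication and comultiplication from the geometric data, verify bialgebra compatibility via the quiver relations, and check both composites---is exactly the paper's approach. However, you have interchanged the geometric sources of the two operations, and this is a genuine error rather than a notational slip.

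In the Cousin construction of $\EuScript E(A)_n$ the stalk of $\widetilde{\EuScript E}^{\alpha}$ at a point with multiplicities $(\lambda_1,\dots,\lambda_k)$ is $A_{\lambda_1}\otimes\dots\otimes A_{\lambda_k}$, and the \emph{generalization} maps were defined using components of $\overline{\Delta}$. Passing from the deepest stratum (partition $(n)$, stalk $A_n$) to a finer one (partition $(p,q)$, stalk $A_p\otimes A_q$) is a generalization, so generalization maps induce $A_n\to A_p\otimes A_q$: this is the \emph{comultiplication}, not the multiplication. The paper states this explicitly (``generalization maps define a comultiplication''). The multiplication is recovered from the opposite operators in the double-quiver description of \cite{KaSh}, corresponding on the Cousin side to the differentials $\delta_{\alpha,\beta}$, which were built from $m$. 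Likewise, the factorization isomorphisms $v_{(p,q)}$ are isomorphisms over the open locus and by themselves do not produce a map $A_n\to A_p\otimes A_q$; they only serve to identify the hyperbolic stalk on that locus with a tensor product, after which the actual map comes from the generalization operator. Once you swap these two attributions, your argument goes through and coincides with the paper's.
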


\begin{proof}

Let $\EuScript K\in FS_{un}(Ran(\mathbb A))$ then for every $I\in Fin$ we define a space $\omega_{Ran}^B(\EuScript K)_{|I|}:=\omega^B_I(\EuScript K_I)^{\Sigma_{|I|}}.$ Note that for every $\pi \colon J\twoheadrightarrow I$ we have an inclusion $\Delta_*^{(\pi)}\EuScript K_I \hookrightarrow \EuScript K_J$ which descends to an injection:
$\omega_{Ran}^B(\EuScript K)_{|I|}\hookrightarrow \omega_{Ran}^B(\EuScript K)_{|J|}$ and thus we have:
$$\omega_{Ran}^B(\EuScript K)_{1}\subset\dots  \subset\omega_{Ran}^B(\EuScript K)_{n}\subset \dots \subset \omega_{Ran}^B (\EuScript K).$$ Note that we have:
$i_{\sigma}^!\EuScript E(A)=(p_{I*}i^!p^*_I \EuScript E(A))^{\Sigma},$ hence analogically to \cite{Shuffle} generalization maps define a comultiplication:
$$
\overline{\Delta}_{|I'|,|I''|}\colon \omega_{Ran}^B(\EuScript K)_{|I|}\longrightarrow \omega_{Ran}^B(\EuScript K)_{|I'|}\otimes \omega_{Ran}^B(\EuScript K)_{|I''|},\quad |I|=|I'|+|I''|,
$$
which is coassociative.
\par\medskip
Using \textit{ibid} we also define a map:
$$
m_{|I'|,|I''|}\colon \omega_{Ran}^B(\EuScript K)_{|I'|}\otimes \omega_{Ran}^B(\EuScript K)_{|I''|} \longrightarrow \omega_{Ran}^B(\EuScript K)_{|I|},,\quad |I|=|I'|+|I''|,
$$
which is associative. Once again by \textit{ibid} operators $m_{|I'|,|I''|}$ and $\overline{\Delta}_{|I'|,|I''|}$ are compatible, therefore $\omega_{Ran}^B(\EuScript K)$ is a conilpotent bialgebra with a unit.

\end{proof}

\section{Quantization and complements}

\subsection{Quantization} Recall that we have a de Rham functor which induces \textit{Riemann-Hilbert equivalence} $RH\colon \mathcal M^{dR}_{un}(\mathbb A^I,\mathcal S_{\emptyset})\overset{\sim}{\longrightarrow} \mathcal M^{B}_{un}(\mathbb A^I,\mathcal S_{\emptyset}).$ Important property of this equivalence that $RH$ commutes with a six functor formalism and therefore induces an equivalence between unipotent $\EuScript D$-modules and unipotent perverse sheaves on $Ran(\mathbb A^1)$ and also an equivalence between categories of unipotent factorization algebras and unipotent factorizable sheaves. Now we are ready to give a construction of Lie bialgebras (resp. Hopf algebras) quantization (resp. dequantization):

\begin{Def}
We define a \textbf{quantization functor} $Q$ over $\mathbb C$ as a functor which makes the following diagram into commutative:
\begin{equation*}
\begin{diagram}[height=2.5em,width=6em]
LieB_c &   \rTo_{\sim}^{Q}   &   AlgB_c  &  \\
\dTo_{\sim}^{\Lambda} & &  \dTo_ {\sim}^{\Omega}  && \\
FA_{un}(Ran(\mathbb A^1)) &   \rTo_{\sim}^{RH}   &  FS_{un}(Ran(\mathbb A^1))    \\
\end{diagram}
\end{equation*}
\end{Def}

\begin{remark}[Associators] Recall that all proofs of Etingof-Kazhdan quantization (for example \cite{EK} \cite{Tam}) require a choice of \textit{associator}. Here we will briefly sketch how associators appear in our construction. To define a connection with Drinfeld associators let us note that categories $\mathcal M^{B}_{un}(Ran(\mathbb A^1))$ and $\mathcal M^{dR}_{un}(Ran(\mathbb A^1))$ can be equipped with certain \textit{natural tensor structures}. These tensor structures come from \textit{fusion tensor structure} on category of perverse sheaves (resp. $\EuScript D$-modules) on $\mathbb A^I.$ The definition of the fusion tensor product in the case of perverse sheaves is the following:
\par\medskip

Denote by $\Delta$ a flat in a product stratification of $\mathbb A_{\mathbb R}^I\times \mathbb A_{\mathbb R}^I,$ which corresponds to a $\mathbb A^I_{\mathbb R}.$ The inclusion of a complex diagonal $\Delta_{\mathbb C}$ can be factored as:
\begin{equation}
k\colon \Delta_{\mathbb C}=\Delta+i\Delta\hookrightarrow (\mathbb A^{I}_{\mathbb R}\times \mathbb A^{I}_{\mathbb R})+i\Delta,\quad j_{\Delta}\colon (\mathbb A^{I}_{\mathbb R}\times \mathbb A^{I}_{\mathbb R})+i\Delta\hookrightarrow \mathbb A^I\times \mathbb A^I.
\end{equation}
Then for two perverse sheaves $\EuScript K_1$ and $\EuScript K_2$ on $\mathbb A^I$ the corresponding exterior tensor product $\EuScript K_1\boxtimes \EuScript K_2$ is naturally a perverse sheaf on $\mathbb A^I\times \mathbb A^I$ smooth with respect to a product stratification. We define a \textit{new perverse sheaf} $\EuScript K_1\otimes^{*!} \EuScript K_2$ by the rule:
\begin{equation}
\EuScript K_1\otimes^{*!} \EuScript K_2:=k^*j_{\Delta}^!\EuScript K_1\boxtimes \EuScript K_2[|I|],
\end{equation}
This operation naturally extends to the symmetric unital product on $\mathcal M_{un}^{B}(\mathbb A^I,\mathcal S_{\emptyset}).$

Categories $\mathcal M^{B}_{un}(Ran(\mathbb A^1))$ and $\mathcal M^{dR}_{un}(Ran(\mathbb A^1))$ are equipped with ''fiber'' functors $\omega_{Ran}^B$ and $\omega_{Ran}^{dR}$ to vector spaces. Composing the functor $\omega^B_{Ran}$ with the Riemann-Hilbert functor we obtain two ''fiber'' functors from category of $\EuScript D$-modules on $Ran(\mathbb A).$ Then we have the following:
\begin{Conj} Denote by $\mathrm {Isom}^{\otimes}(\omega_{Ran}^{dR},\omega_{Ran}^B RH)$ scheme of tensor isomorphisms between ''fiber'' functors, then we have an isomorphism of schemes:
$$
\mathrm {Isom}^{\otimes}(\omega_{Ran}^{dR},\omega_{Ran}^B RH)\cong Ass(\mathbb C)
$$
\end{Conj}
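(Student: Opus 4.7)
The plan is to interpret the statement through Tannakian formalism and reduce it to Drinfeld's theorem identifying associators with comparisons of de Rham and Betti realizations of the Knizhnik-Zamolodchikov connection. Both categories $\mathcal M^{dR}_{un}(Ran(\mathbb A^1))$ and $\mathcal M^{B}_{un}(Ran(\mathbb A^1))$, equipped with the fusion product $\otimes^{*!}$, are $\mathbb C$-linear neutral Tannakian categories with fiber functors $\omega^{dR}_{Ran}$ and $\omega^B_{Ran}$. By general Tannakian yoga, the scheme $\mathrm{Isom}^{\otimes}(\omega_{Ran}^{dR},\omega_{Ran}^B RH)$ is a torsor over the pro-algebraic group $\mathrm{Aut}^{\otimes}(\omega^{dR}_{Ran})$, so the task splits into (a) computing this torsor locally on simple enough objects and (b) checking that the associativity, symmetry and unit constraints coincide with the pentagon, hexagon and (anti)involutivity equations cutting out $Ass(\mathbb C)$.

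First I would analyze the $|I|=2$ case. The fusion product $\mathcal K_1\otimes^{*!}\mathcal K_2$ on $\mathbb A^2$ is the maximal extension across the diagonal of the external product over $\mathbb A^2\setminus\Delta$. For the universal generator (the factorization algebra $\Lambda(\mathfrak g_{\mathrm{free}})$ on a free Lie bialgebra), this extension on the de Rham side is controlled by the trivial connection on $\mathbb A^2\setminus\Delta$ twisted by the Lie bracket logarithm, i.e.\ by the KZ connection on $\mathbb C^\times = (\mathbb A^2\setminus\Delta)/\mathrm{translation}$. On the Betti side the extension is dictated by monodromies around the diagonal, so the isomorphism between fiber functors is exactly a choice of parallel transport from $0$ to $1$ of the KZ connection. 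This gives the universal $R$-matrix part of the associator data.

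Next I would treat $|I|=3$ to extract the pentagon. The compatibility of the factorization isomorphisms $c_\pi$ under the two orderings $\mathbb A^3 \supset U^{12}\cup U^{23} \supset U^{123}$ and the functoriality of $\Lambda$ and $\Omega$ under $Fin^{\circ}$ translate, after taking logarithms, into precisely the pentagon equation on $M_{0,4}$. The symmetric structure on $Ran(\mathbb A^1)$ (equivariance under $\Sigma_I$ built into Proposition \ref{ran}) yields the hexagon relations. Combining these, any tensor isomorphism $\omega^{dR}_{Ran}\simeq \omega^B_{Ran}RH$ evaluated on the universal factorization algebra produces a pair $(\mu,\Phi)$ satisfying Drinfeld's defining equations, and conversely any associator gives rise to such a tensor isomorphism by parallel transport on the configuration spaces $Conf_n(\mathbb A^1)$. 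The resulting map of $\mathbb C$-schemes is then a $GRT_1(\mathbb C)$-equivariant map between torsors of the same group, hence an isomorphism.

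The main obstacle is (b): checking that the fusion tensor structure on $Ran(\mathbb A^1)$ has genuinely the rigidity needed to produce every equation defining $Ass(\mathbb C)$, and no spurious extra constraints. Concretely this amounts to proving that $\mathrm{Aut}^{\otimes}(\omega^{dR}_{Ran}) \cong GRT_1(\mathbb C)$, which in turn requires a careful analysis of the Koszul/Quillen-dual operadic structure on the Ran space and a universality statement for $\Lambda$ applied to the free Lie bialgebra. I expect the cleanest route is to identify the pro-unipotent radical of $\mathrm{Aut}^{\otimes}(\omega^{dR}_{Ran})$ with the Lie algebra $\mathfrak{grt}_1$ by exhibiting, for each $n$, a presentation of the $n$-point sector in terms of arrangement complements, and then invoking Drinfeld's bitorsor theorem to upgrade the resulting isomorphism of schemes to the asserted one.
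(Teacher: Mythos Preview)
The statement you are attempting to prove is stated in the paper as a \emph{Conjecture}, not a theorem; the paper gives no proof and offers only the remark that the equivalences $\Lambda$ and $\Omega$ can be defined over $\mathbb Q$ and that methods from \cite{Drew} might extend the conjecture to rational associators. There is therefore no proof in the paper to compare your proposal against.

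Your proposal is a reasonable heuristic outline of why one might expect the conjecture to hold, and indeed it tracks the informal motivation the paper gives in the surrounding remark (Tannakian torsors between de Rham and Betti fiber functors, KZ connections, pentagon and hexagon from $|I|=3$). But you yourself identify the genuine gap: step (b), the identification $\mathrm{Aut}^{\otimes}(\omega^{dR}_{Ran})\cong GRT_1(\mathbb C)$, is not established anywhere and would require substantial new work---in particular, one would need to know that the fusion tensor structure on $\mathcal M^{dR}_{un}(Ran(\mathbb A^1))$ is rigid enough to make the category genuinely Tannakian, that the pro-unipotent radical of the automorphism group is exactly $\mathfrak{grt}_1$ and not something larger, and that no extra relations beyond pentagon and hexagon arise from higher $|I|$. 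None of these is proved in the paper, and your sketch does not supply arguments for them either. So what you have written is a plausible strategy toward the conjecture rather than a proof, which is consistent with the paper's own treatment of the statement as open.
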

\end{remark}
Note that equivalences $\Lambda $ and $\Omega $ can be easily defined over $\mathbb Q.$ It seems that together with methods from \cite{Drew} this allows us to extend the above conjecture to the case of rational associators.

\newpage
\bibliographystyle{alphanum}
\bibliography{tt}

\begin{thebibliography}{Tam}

\bibitem[BD]{BD}
Alexander Beilinson and Vladimir Drinfeld.
\newblock {\em Chiral Algebras}, volume~51 of {\em American Mathematical
  Society Colloquium Publications}.
\newblock American Mathematical Society, Providence, RI, 2004.

\bibitem[Bei]{B2}
Alexander Beilinson.
\newblock How to glue perverse sheaves.
\newblock In Yuri Manin, I., editor, {\em K-Theory, Arithmetic and Geometry},
  volume 1289 of {\em Lecture Notes in Mathematics}. Springer-Verlag, Berlin,
  1987.

\bibitem[Dre]{Drew}
Brad Drew.
\newblock {\em Réalisations tannakiennes des motifs mixtes triangulés}.
\newblock PhD thesis, Univ. Paris 13, 2013.

\bibitem[Dri]{Drin}
Vladimir Drinfeld.
\newblock Quantum groups.
\newblock In {\em Proceedings of the {I}nternational {C}ongress of
  {M}athematicians 1986}.

\bibitem[EK]{EK}
Pavel Etingof and David Kazhdan.
\newblock Quantization of {L}ie bialgebras.
\newblock {\em Selecta math.}, 2:1--41, 1996.

\bibitem[Kho]{Kho}
Sergey Khoroshkin.
\newblock {$D$}-modules over the arrangemtns of hyperplanes.
\newblock {\em Communications in Algebra}, 23:3481--3504, 1995.

\bibitem[KS1]{KaSh}
Mikhail Kapranov and Vadim Schechtman.
\newblock Perverse sheaves over real hyperplane arrangements.
\newblock {\em Ann. Math}, 183:619--679, 2016.

\bibitem[KS2]{Shuffle}
Mikhail Kapranov and Vadim Schechtman.
\newblock Shuffle algebras and perverse sheaves.
\newblock \url{https://arxiv.org/pdf/1904.09325.pdf}, 2019.

\bibitem[KS3]{fdmod}
Sergey Khoroshkin and Vadim Schechtman.
\newblock Factorizable {$\mathcal D$}-modules.
\newblock {\em Theory and Applications of Categories}, 11:107--131, 2003.

\bibitem[KV]{KV}
Sergey Khoroshkin and Alexander Varchenko.
\newblock Quiver {$D$}-modules and homology of local systems over an
  arrangement of hyperplanes.
\newblock {\em International Mathematics Research Papers}, pages 1--116, 2006.

\bibitem[Tam]{Tam}
Dmitry Tamarkin.
\newblock Quantization of {L}ie bialgebras via the formality of the operad of
  little disks.
\newblock {\em {GAFA} {G}eometric And {F}unctional {A}nalysis}, 17:537--604,
  2007.

\end{thebibliography}

\end{document}